  \theoremstyle{plain}
    \newtheorem{thm}{Theorem}[section]
    \newtheorem{prop}[thm]{Proposition}
   \newtheorem{lemma}[thm]{Lemma}
    \newtheorem{subsec}[thm]{}
\theoremstyle{definition}
    \newtheorem{defn}[thm]{Definition}
        \newtheorem{remark}[thm]{Remark}
    \newtheorem{exam}[thm]{Example}
\theoremstyle{remark}
\title{}
\author{}
\date{}
\begin{document}

\title[Quasi-twilled associative algebras and deformation maps]{Quasi-twilled associative algebras, deformation maps and their governing algebras}

\author{Apurba Das}
\address{Department of Mathematics,
Indian Institute of Technology, Kharagpur 721302, West Bengal, India.}
\email{apurbadas348@gmail.com, apurbadas348@maths.iitkgp.ac.in}

\author{Ramkrishna Mandal}
\address{Department of Mathematics, Indian Institute of Technology, Kharagpur 721302, West Bengal, India.}
\email{ramkrishnamandal430@gmail.com}

\begin{abstract}
A quasi-twilled associative algebra is an associative algebra $\mathbb{A}$ whose underlying vector space has a decomposition $\mathbb{A} = A \oplus B$ such that $B \subset \mathbb{A}$ is a subalgebra. In the first part of this paper, we give the Maurer-Cartan characterization and introduce the cohomology of a quasi-twilled associative algebra.

In a quasi-twilled associative algebra $\mathbb{A}$, a linear map $D: A \rightarrow B$ is called a {\em strong deformation map} if $\mathrm{Gr}(D) \subset \mathbb{A}$ is a subalgebra. Such a map generalizes associative algebra homomorphisms, derivations, crossed homomorphisms and the associative analogue of modified {\sf r}-matrices. We introduce the cohomology of a {strong deformation map} $D$ unifying the cohomologies of all the operators mentioned above. We also define the governing algebra for the pair $(\mathbb{A}, D)$ to study simultaneous deformations of both $\mathbb{A}$ and $D$.

On the other hand, a linear map $r: B \rightarrow A$ is called a {\em weak deformation map} if $\mathrm{Gr} (r) \subset \mathbb{A}$ is a subalgebra. Such a map generalizes relative Rota-Baxter operators of any weight, twisted Rota-Baxter operators, Reynolds operators, left-averaging operators and right-averaging operators. Here we define the cohomology and governing algebra of a {weak deformation map} $r$ (that unify the cohomologies of all the operators mentioned above) and also for the pair $(\mathbb{A}, r)$ that govern simultaneous deformations.
\end{abstract}

\maketitle

%\curraddr{}
%\email{}

%\subjclass[2010]{}
%\keywords{}

\medskip

 {2020 MSC classification:} 16D20, 16E40, 16S80, 16W25, 17B38.
 
 %16D20, 16E40, 16S80, 16T25, 16W99, 17B38.

 {Keywords:} Quasi-twilled associative algebras, Deformation maps, Crossed homomorphisms, Rota-Baxter operators, Controlling algebras, Cohomology.
 
 %Rota-Baxter operators, twisted Rota-Baxter operators, Crossed homomorphism, Averaging operators, Reynolds operators, Cohomology, Deformation.

 %\medskip

%\noindent {\sf Date of resubmission:} July 26, 2021.

\thispagestyle{empty}

\tableofcontents

%\vspace{0.2cm}

\medskip

\section{Introduction}

The cohomology of an algebraic structure is an important invariant that
determine extensions and formal deformations of the structure. Hochschild \cite{hoch} first discovered the cohomology of associative algebras in the study of extension problems. Later, Gerstenhaber \cite{gers} developed formal deformations of an associative algebra $A$ and showed that such deformations are closely related to the Hochschild cohomology groups of $A$. Subsequently, Nijenhuis and Richardson \cite{nij-ric} generalized the formal deformation theory to Lie algebras. After that, many authors developed the cohomology and deformation theory of various other algebraic structures over the years. In particular, Balavoine \cite{bala} generalized the theories to algebras over binary quadratic operads. See also \cite{lod-val-book} for cohomology and deformations of algebras over more general operads.

\medskip

Algebraic structures are often characterized by certain operators defined on them. Among others, algebra homomorphisms, derivations and crossed homomorphisms are the most popular. Another interesting operator that appears in the solutions of the modified associative Yang-Baxter equation is given by the (associative analogue of) modified {\sf r}-matrix \cite{guo-commun,das-modified}.
On the other hand, to study the splitting of algebras and duplications of algebra operations, one encounters some operators defined on them, e.g. Rota-Baxter operators (of weight $0$) \cite{guo-book}, Rota-Baxter operators of weight $1$ \cite{Bai-Guo-Ni,guo-commun}, twisted Rota-Baxter operators \cite{uchino}, Reynolds operators \cite{zhang-gao-guo}, left averaging operators \cite{cao} and right averaging operators \cite{cao}. In an associative algebra $A$ with the multiplication $\cdot$ ~, all the operators as mentioned above are defined respectively by the following:
\begin{align*}
& \text{algebra homomorphism }    && D (a \cdot b) = D (a) \cdot D (b), \\
& \text{derivation} && D (a \cdot b) = D (a) \cdot b + a \cdot D(b), \\
& \text{crossed homomorphism} && D (a \cdot b) = D (a) \cdot b + a \cdot D(b) + D(a) \cdot D(b),\\
& \text{modified associative {\sf r}-matrix} && D (a) \cdot D(b) = D ( D(a) \cdot b + a \cdot D(b) ) - a \cdot b, \\
& \text{Rota-Baxter operator (of weight $0$)} && r (a) \cdot r(b) = r (    r(a) \cdot b + a \cdot r (b) ), \\
& \text{Rota-Baxter operator of weight $1$} && r (a) \cdot r(b) = r (    r(a) \cdot b + a \cdot r (b) + a \cdot b ),\\
& \text{$\theta$-twisted Rota-Baxter operator} && r(a) \cdot r(b) = r \big(    r(a) \cdot b + a \cdot r (b) + \theta ( r(a), r(b)) \big), \\
& \text{Reynolds operator} && r (a) \cdot r(b) = r (    r(a) \cdot b + a \cdot r (b) - r(a) \cdot r(b) ),\\
& \text{left averaging operator} && r(a) \cdot r(b) = r ( r(a) \cdot b ), \\
& \text{right averaging operator} && r (a) \cdot r(b) = r ( a \cdot r(b) ).
\end{align*}
Note that some of these operators also have their relative version, i.e. they can be defined with respect to suitable $A$-bimodules. In \cite{nij-ric2} Nijenhuis and Richardson initiate the study of cohomology and deformations of an operator. Namely, they defined and studied the cohomology of a Lie algebra homomorphism. Then Gerstenhaber and Schack \cite{gers-sch} established the cohomology of an associative algebra homomorphism and study deformations. The underlying higher structures for controlling the underlying algebras and homomorphisms were formulated in \cite{yau} and \cite{fre}. Recently, cohomology and deformation theories of derivations \cite{loday-morph}, crossed homomorphisms \cite{das-crossed,pei}, modified associative {\sf r}-matrices \cite{das-modified}, (relative) Rota-Baxter operators of weight $0$ \cite{tang,das-rota}, (relative) Rota-Baxter operators of weight $1$ \cite{das-weighted}, twisted Rota-Baxter operators \cite{das-twisted,das-ns}, Reynolds operators \cite{das-twisted,das-ns}, left averaging operators \cite{wang-zhou} and right averaging operators \cite{wang-zhou} were extensively studied. Note that the simultaneous deformations of the underlying algebras and the respective operators were also investigated in some cases \cite{das-mishra,gers-sch,das-avg,lazarev,fre,yau,wang-zhou,wang-zhou2}.

\medskip

Our main aim in this paper is to find a unified approach to study all the operators mentioned above. To do so, we emphasise that all the operators listed above can be described by their graphs which are subalgebras in some bigger algebras. To understand these bigger algebras in full generality, we consider the notion of a quasi-twilled associative algebra \cite{uchino-t} (also called an {\em extending datum} in \cite{agore2}). This is a quasi-analogue of a matched pair of associative algebras  \cite{agore} and the associative counterpart of a quasi-twilled Lie algebra \cite{jiang-sheng-tang}. Implicitly, a quasi-twilled associative algebra is an associative algebra $\mathbb{A}$ whose underlying vector space has a direct sum decomposition $\mathbb{A} = A \oplus B$ such that $B$ is a subalgebra of $\mathbb{A}$ (see Theorem \ref{explicit-qtaa} for the explicit description). Examples of quasi-twilled associative algebras include the direct product of two associative algebras, semi-direct products, semi-direct products twisted by Hochschild $2$-cocycles and of course matched pairs of associative algebras. In the first part of this paper, we give the Maurer-Cartan characterization of a quasi-twilled associative algebra $\mathbb{A}$ and as an application, we define the cohomology of $\mathbb{A}$. In particular, we get the Maurer-Cartan characterization and cohomology of a matched pair of associative algebras. To our knowledge, this cohomology was even unknown in the literature.

\medskip

In a matched pair of associative algebras, Agore \cite{agore} introduced the notion of a ``deformation map'' while studying the classifying compliments problem (CCP). Since a quasi-twilled associative algebra $\mathbb{A} =  A \oplus B$ is not symmetric with respect to $A$ and $B$, we obtain two variants of the deformation map in a quasi-twilled associative algebra $\mathbb{A}$. 

\medskip

A linear map $D: A \rightarrow B$ is said to be a {\em strong deformation map} if its graph $\mathrm{Gr} (D) \subset \mathbb{A}$ is a subalgebra. Such a map unifies algebra homomorphisms, derivations, crossed homomorphisms and the associative modified {\sf r}-matrices. A strong deformation map $D: A \rightarrow B$ induces an associative algebra structure on $A$ (denoted by $A_D$). Moreover, the pair $(A_D, B)$ forms a matched pair of associative algebras. As a result, one obtains an $A_D$-bimodule structure on the vector space $B$. We define the corresponding Hochschild cohomology of $A_D$ with coefficients in $B$ as the cohomology of the strong deformation map $D$. This cohomology generalizes the existing cohomologies of algebra homomorphisms \cite{gers-sch}, derivations \cite{loday-morph} and crossed homomorphisms \cite{das-crossed}. Next, given a quasi-twilled associative algebra $\mathbb{A}$, we characterize strong deformation maps by Maurer-Cartan elements in a suitable curved $L_\infty$-algebra, called the {\em controlling algebra} for strong deformation maps. Twisting the controlling algebra by a Maurer-Cartan element (i.e. a strong deformation map $D$), we obtain the $L_\infty$-algebra that governs the deformations of the operator $D$ keeping the underlying quasi-twilled associative algebra intact. Next, we proceed as follows to understand the simultaneous deformations of $\mathbb{A}$ and the operator $D$. For any two vector spaces $A$ and $B$ (not necessarily equipped with any additional structures), we first construct an $L_\infty$-algebra whose Maurer-Cartan elements correspond to pairs $(\mathbb{A}, D)$, where $\mathbb{A} = A \oplus B$ has a quasi-twilled associative algebra structure and $D: A \rightarrow B$ is a strong deformation map in $\mathbb{A}$. Again, twisting the $L_\infty$-algebra by a Maurer-Cartan element $(\mathbb{A}, D)$, one gets the governing $L_\infty$-algebra of the pair $(\mathbb{A}, D)$ of a quasi-twilled associative algebra $\mathbb{A}$ and a strong deformation map $D$ in it. This higher structure governs the simultaneous deformations of $\mathbb{A}$ and $D$.

\medskip

On the other hand, a linear map $r: B \rightarrow A$ in a quasi-twilled associative algebra $\mathbb{A}$ is said to be a {\em weak deformation map} if its graph $\mathrm{Gr }(r) \subset \mathbb{A}$ is a subalgebra. It unifies (relative) Rota-Baxter operators of weight $0$, (relative) Rota-Baxter operators of weight $1$, twisted Rota-Baxter operators, Reynolds operators, left averaging operators and right averaging operators. We observe that a weak deformation map $r$ induces a new quasi-twilled associative algebra structure $\mathbb{A}_r$ on the same direct sum $A \oplus B$. However, this induced structure $\mathbb{A}_r$ need not form a matched pair of associative algebras. (For this reason, we call $r$ a weak deformation map and $D$ a strong deformation map.) Note that, the quasi-twilled associative structure $\mathbb{A}_r$ gives rise to a new associative algebra structure on $B$ (denoted by $B_r$) and a $B_r$-bimodule structure on $A$. We call the corresponding Hochschild cohomology of $B_r$ with coefficients in $A$ as the cohomology of the weak deformation map $r$. As expected, our cohomology generalizes the existing cohomologies of (relative) Rota-Baxter operators of weight $0$ \cite{das-rota}, (relative) Rota-Baxter operators of weight $1$ \cite{das-weighted}, twisted Rota-Baxter operators \cite{das-ns}, Reynolds operators \cite{das-ns}, left averaging operators \cite{wang-zhou} and right averaging operators \cite{wang-zhou}. Given a quasi-twilled associative algebra $\mathbb{A}$, we also construct the {\em controlling algebra} (which is an $L_\infty$-algebra) for weak deformation maps. As before, twisting this controlling algebra by a Maurer-Cartan element (i.e. a weak deformation map $r$), we get the $L_\infty$-algebra that governs the deformations of the operator $r$. In a particular case, we obtain the controlling algebra for ``deformation maps'' in a matched pair of associative algebras introduced by Agore \cite{agore}. In the end, we also construct the governing $L_\infty$-algebra for a pair $(\mathbb{A}, r)$ of a quasi-twilled associative algebra $\mathbb{A}$ and a weak deformation map $r$ to study the simultaneous deformations. This structure generalizes the existing governing $L_\infty$-algebras for Rota-Baxter algebras \cite{das-mishra,wang-zhou2} and left (resp. right) averaging algebras \cite{das-avg,wang-zhou}.

\medskip

In \cite{agore4} the authors have introduced non-abelian $2$-cocycles on an associative algebra $A$ with values in another associative algebra $B$ while studying non-abelian extensions. In the final part of this paper, we observe that a non-abelian $2$-cocycle on $A$ with values in $B$ also gives rise to a quasi-twilled associative algebra. Then we define Rota-Baxter operators twisted by a non-abelian $2$-cocycle and a new algebraic structure, called {\em twisted tridendriform algebra}. We show that any Rota-Baxter operator twisted by a non-abelian $2$-cocycle induces a twisted tridendriform algebra and conversely, any twisted tridendriform algebra arises in this way. This unifies the facts that any Rota-Baxter operator of weight $1$ induces a tridendriform algebra \cite{ebrahimi} and any twisted Rota-Baxter operator induces an NS-algebra structure \cite{uchino}.

\medskip

It is important to remark that the authors of \cite{jiang-sheng-tang} have recently studied deformation maps in a quasi-twilled Lie algebra. They considered two types of deformation maps (type I and type II) generalizing deformation maps in a matched pair of Lie algebras \cite{agore3}. Among others, they mainly studied cohomology and deformations of these maps. Motivated by the work of \cite{jiang-sheng-tang}, in the present paper, we have considered two types of deformation maps in a quasi-twilled associative algebra. However, we call them strong deformation maps and weak deformation maps respectively (instead of type I and type II deformation maps) depending on whether the induced quasi-twilled associative structure on $\mathbb{A}$ gives rise to a matched pair of associative algebras or not. In addition to the cohomology and deformations of these maps, we extensively study quasi-twilled associative algebras and simultaneous deformations of a quasi-twilled associative algebra with a given (strong or weak) deformation map. Moreover, we discuss Rota-Baxter operators twisted by non-abelian $2$-cocycles and the induced algebraic structure.

\medskip

The paper is organized as follows. In section \ref{sec2}, we recall some necessary background on (curved) $L_\infty$-algebras and their construction by derived brackets. In section \ref{sec3}, we consider quasi-twilled associative algebras and give their Maurer-Cartan characterization and cohomology. We define strong deformation maps in a quasi-twilled associative algebra $\mathbb{A}$ and introduce their cohomology in section \ref{sec4}. We study the governing $L_\infty$-algebras and deformations of a strong deformation map $D$ and of the pair $(\mathbb{A}, D)$ in section \ref{sec5}. We complete the analogous study for weak deformation maps in sections \ref{sec6} and \ref{sec7}, respectively. Finally, in section \ref{sec8}, we study Rota-Baxter operators twisted by any non-abelian $2$-cocycle and consider twisted tridendriform algebras.

%\textcolor{red}{Koszul sign}

\medskip

\section{Some background on (curved) L-{infinity}-algebras}\label{sec2}
This section recalls (curved) $L_{\infty}$-algebras and their Maurer-Cartan elements. In particular, we recall Voronov's construction of a (curved) $L_{\infty}$-algebra whose multi-ary operations are given by derived brackets. Our main references are \cite{getzler,kaj-stas,lada-markl,lazarev,voro}.
\begin{defn}
 A {\bf curved $L_\infty$-algebra} is a pair $(\mathcal{G},\{l_k\}_{k=0}^{\infty})$ consisting of a graded vector space $\mathcal{G} = \displaystyle\oplus_{n\in \mathbb{Z}}~ \mathcal{G}_n$ equipped with a collection $\{l_k:{\mathcal{G}}^{\otimes k}\rightarrow \mathcal{G} \}_{k=0}^{\infty}$ of graded linear maps of degree $1$ satisfying the following properties: 
\begin{itemize}
    \item [(i)] for each $k\geq 1$, the map $l_k$ is graded symmetric in the sense that \begin{center}
        ${l_k}(x_{\sigma (1)},\ldots,x_{\sigma(k)}) = \epsilon(\sigma)~ {l_k}(x_1,\ldots,x_k)$, for all $x_1,\ldots,x_k\in \mathcal G$ and $\sigma \in S_k$,
    \end{center}
 \item [(ii)] for any $N\geq 0$, we have
  \begin{align}\label{a1}
\displaystyle\sum^{N}_{i=0}~\sum_{\sigma\in Sh(i,n-i)} \epsilon(\sigma)~l_{n-i+1}\big(l_i(x_{\sigma(1)},\ldots,x_{\sigma(i)}),x_{\sigma(i+1)},\ldots,x_{\sigma(N)}\big) = 0,
 \end{align}
 for all homogeneous elements $x_1,\ldots,x_N \in \mathcal{G}$.
\end{itemize}
\end{defn}
In the above definition, $\epsilon (\sigma)$ is the {\em Koszul sign} that appears in the graded context. Also, note that $l_0$ represents a degree 1 element of $\mathcal G$. When $l_0=0$, a curved $L_\infty$-algebra becomes an $L_\infty$-algebra.

A curved $L_\infty$-algebra  $(\mathcal{G},\{l_k\}_{k=0}^{\infty})$ is said to be {\bf weakly filtered} if there exists a descending filtration ${\mathcal{F}}_{\bullet}\mathcal{G}$ of $\mathcal{G}$ (that is, $\mathcal{G}= {\mathcal{F}}_1\mathcal{G}\supset {\mathcal{F}}_2\mathcal{G} \supset \cdots  \supset {\mathcal{F}}_k\mathcal{G}\supset \cdots $) and a natural number $N$ (called an index) such that 
\begin{align*}
     \mathcal{G} = \lim_{k \to \infty} {\mathcal{G}/{\mathcal{F}}_k\mathcal{G}} ~~ \text{ and } ~~ l_k(\mathcal{G},\ldots,\mathcal{G}) \subset {\mathcal{F}}_k\mathcal{G}, \text{ for all } k\geq N.
    \end{align*}
In the rest of this paper, we assume that all (curved) $L_{\infty}$-algebras are weakly filtered so that all formal sums under consideration are convergent.

 Let $(\mathcal{G},\{l_k\}_{k=0}^{\infty})$ be a curved $L_{\infty}$-algebra. A degree 0 element $\alpha\in \mathcal{G}_0$ is said to be a {\bf Maurer-Cartan element} of the curved $L_{\infty}$-algebra if it satisfies the following Maurer-Cartan equation \begin{align*}
 l_0 + \displaystyle\sum_{k=1}^{\infty} \dfrac{1}{k!}~l_k(\alpha,\ldots,\alpha) = 0.
  \end{align*}
\begin{thm}\label{Tmcla}
 Let $\alpha$ be a Maurer-Cartan element of the curved $L_{\infty}$-algebra $(\mathcal{G},\{l_k\}_{k=0}^{\infty})$. For each $k\geq 1$, we define a degree 1 graded linear map ${l^\alpha_k}:{\mathcal{G}}^{\otimes k}\rightarrow \mathcal{G}$ by 
 \begin{align*}
 {l^\alpha_k}(x_1,\ldots,x_k) :=  \displaystyle\sum_{n=0}^{\infty} \dfrac{1}{n!}~l_{n+k}(\alpha,\ldots,\alpha,x_1,\ldots,x_k), \text{ for } x_1,\ldots, x_k\in \mathcal{G}.   \end{align*} 
 Then $(\mathcal{G},\{l_k^\alpha \}_{k=1}^{\infty})$ is an $L_{\infty}$-algebra.
\end{thm}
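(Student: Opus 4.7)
The plan is to verify, for the twisted brackets $\{l_k^\alpha\}_{k\ge 1}$, each of the three defining features of an $L_{\infty}$-algebra: well-definedness of the infinite sums, graded symmetry of degree $1$, and the higher Jacobi identities. Convergence is immediate from the weakly filtered hypothesis: the $n$-th term in $l_k^\alpha(x_1,\ldots,x_k)=\sum_n \frac{1}{n!}\,l_{n+k}(\alpha^{\otimes n},x_1,\ldots,x_k)$ lies in $\mathcal{F}_{n+k}\mathcal{G}$ for $n+k$ large. Graded symmetry in the $x_i$'s and the degree-$1$ property are inherited from the original $l_{n+k}$, using crucially that $|\alpha|=0$ so that moving the $\alpha$'s past the $x_i$'s costs no Koszul signs.

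The heart of the argument is the higher Jacobi identity: for each $N\ge 1$ and homogeneous $x_1,\ldots,x_N\in\mathcal{G}$, we must show
\begin{align*}
\mathcal{I}_N \, := \, \sum_{i=1}^{N}\,\sum_{\sigma\in Sh(i,N-i)}\epsilon(\sigma)\,l^\alpha_{N-i+1}\bigl(l^\alpha_i(x_{\sigma(1)},\ldots,x_{\sigma(i)}),\,x_{\sigma(i+1)},\ldots,x_{\sigma(N)}\bigr)\,=\,0.
\end{align*}
The idea is to expand each $l^\alpha$ back into the original $l$'s and to recognize the resulting double sum as a sum (over $m\ge 0$) of the curved $L_\infty$ axiom \eqref{a1} applied to the enlarged tuple $(\alpha^{\otimes m},x_1,\ldots,x_N)$. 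Concretely, a generic summand of $\mathcal{I}_N$ is indexed by $(i,\sigma,p,q)$ with weight $\frac{1}{p!\,q!}\epsilon(\sigma)$ in front of the expression
\begin{align*}
l_{N-i+q+1}\bigl(l_{i+p}(\alpha^{\otimes p},x_{\sigma(1)},\ldots,x_{\sigma(i)}),\,\alpha^{\otimes q},\,x_{\sigma(i+1)},\ldots,x_{\sigma(N)}\bigr).
\end{align*}
On the other hand, running \eqref{a1} on $(\alpha^{\otimes m},x_1,\ldots,x_N)$ and partitioning its shuffles $\tau$ according to $(p,\sigma)$ — where $p$ is the number of copies of $\alpha$ landing inside the inner bracket and $\sigma$ is the induced $(i,N-i)$-shuffle of the $x$'s — yields the same expression weighted by $\binom{m}{p}\epsilon(\sigma)$; here the sign reduces to $\epsilon(\sigma)$ because $|\alpha|=0$, and the binomial counts the ways of distributing the (indistinguishable) $\alpha$'s between the inner and outer brackets. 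Setting $m:=p+q$ and using $\binom{m}{p}/m!=1/(p!\,q!)$, one concludes
\begin{align*}
l^\alpha_{N+1}(l^\alpha_0,x_1,\ldots,x_N)\,+\,\mathcal{I}_N\,=\,\sum_{m=0}^{\infty}\frac{1}{m!}\,\mathcal{J}_{N,m},\qquad \mathcal{J}_{N,m}\,:=\,\text{LHS of \eqref{a1} on }(\alpha^{\otimes m},x_1,\ldots,x_N),
\end{align*}
where $l^\alpha_0:=l_0+\sum_{n\ge 1}\frac{1}{n!}l_n(\alpha^{\otimes n})$ accounts for the $i=0$ contribution of the preceding identification. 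Each $\mathcal{J}_{N,m}$ vanishes by the curved $L_\infty$ axiom, so the right-hand side is zero. The Maurer-Cartan equation for $\alpha$ is precisely $l^\alpha_0=0$, so the first term on the left also vanishes, leaving $\mathcal{I}_N=0$, as required. Convergence of every intermediate sum is guaranteed by the weakly filtered hypothesis.

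The main obstacle is exactly this combinatorial reorganization: one must produce a clean bijection between the $(i,\sigma,p,q)$-data parametrizing $\mathcal{I}_N$ and the $(j,\tau)$-data of \eqref{a1} on the enlarged tuple, and verify that signs and multiplicities match. Two observations make it tractable: first, $|\alpha|=0$ trivializes every Koszul sign involving an $\alpha$-exchange, reducing the sign to $\epsilon(\sigma)$; second, the $\alpha$'s being indistinguishable collapses the shuffle count for their positions into the multinomial factor $\binom{m}{p}$, which combines with the $\frac{1}{m!}$ from the Taylor-like expansion to yield exactly $\frac{1}{p!\,q!}$. This is essentially Voronov's twisting procedure, and once the bookkeeping is set up the computation is mechanical.
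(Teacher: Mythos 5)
Your argument is correct. Note that the paper itself gives no proof of Theorem \ref{Tmcla}: it is recalled as standard background (with references to Getzler, Lazarev--Sheng--Tang, Voronov) and used as a black box, so there is no in-paper argument to compare against. What you wrote is the standard verification of the twisting procedure, and the bookkeeping is right: the $\frac{1}{p!\,q!}$ versus $\frac{1}{m!}\binom{m}{p}$ match, the reduction of all Koszul signs to $\epsilon(\sigma)$ via $\lvert \alpha\rvert=0$ is the correct reason the signs collapse, the residual $i=0$ contribution is exactly $l^\alpha_{N+1}(l^\alpha_0,x_1,\ldots,x_N)$ and is killed by the Maurer--Cartan equation, and the weakly filtered hypothesis (which the paper imposes globally for precisely this purpose) justifies all the infinite sums and their rearrangement.
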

The $L_{\infty}$-algebra $(\mathcal{G},\{l_k^\alpha\}_{k=1}^{\infty})$ constructed in the above theorem is said to be obtained from the curved $L_{\infty}$-algebra $(\mathcal{G},\{l_k\}_{k=0}^{\infty})$ twisted by the Maurer-Cartan element $\alpha$.

In \cite{voro} Voronov gave a construction of curved $L_{\infty}$-algebras which we shall recall now. We first need the following notion considered by Voronov.

\begin{defn}
A {\bf curved $V$-data} is a quadruple $(\mathfrak{g},\mathfrak{a},P,\Delta)$ consisting of a graded Lie algebra $(\mathfrak{g},[~,~])$, an abelian graded Lie subalgebra $\mathfrak{a}\subset \mathfrak{g}$, a projection map $P : \mathfrak{g}\rightarrow \mathfrak{a}\subset \mathfrak{g}$ such that $\mathrm{ker}(P)\subset \mathfrak{g}$ is a graded Lie subalgebra and an element $\Delta\in \mathfrak{g}_1$ that satisfies $[\Delta,\Delta] =0$.  
\end{defn}
In a curved $V$-data $(\mathfrak{g},\mathfrak{a},P,\Delta)$, if $\Delta\in \mathrm{ker}{(P)}_1$ then we simly call it a $V$-data.
\begin{thm} \label{Tcvd}
Let $(\mathfrak{g},\mathfrak{a},P,\Delta)$ be a curved $V$-data. \begin{itemize}
    \item [(i)] Then $(\mathfrak{a},\{l_k\}_{k=0}^{\infty})$ is a curved $L_{\infty}$-algebra, where \begin{align*} l_0= P(\Delta) \quad \text{ and } \quad 
  l_k(a_1,\ldots,a_k) = P[\cdots[[\Delta,a_1],a_2],\ldots,a_k], \text{ for } a_1,\ldots,a_k \in \mathfrak{a}.    
  \end{align*}
  \item[(ii)] If $\mathfrak{h}\subset \mathfrak{g}$ is a graded Lie subalgebra so that $[\Delta,\mathfrak{h}]\subset \mathfrak{h}$, then the graded vector space $\mathfrak{h}[1]\oplus \mathfrak{a}$ can also be equipped with a curved $L_{\infty}$-algebra structure whose structural operations $\{\widetilde{l}_k\}_{k=0}^{\infty}$ are given by
  \begin{align*} \widetilde{l}_0=~& P(\Delta),\\
\widetilde{l}_1(x[1]) =~& [\Delta,x][1]+P(x),\\ 
\widetilde{l}_2(x[1],y[1]) =~& (-1)^{\lvert x \rvert} [x,y][1] ,\\
\widetilde{l}_k\big(x[1],a_1,\ldots,a_{k-1}\big) =~& P~[\cdots [[x,a_1],a_2],\ldots,a_{k-1} ],~~k\geq 2,\\
\widetilde{l}_k\big(a_1,\ldots,a_{k}\big) =~& P~[\cdots [[\Delta,a_1],a_2],\ldots,a_{k} ],~~ k\geq 1,
\end{align*} 
\end{itemize}
for homogeneous elements $x,y\in \mathfrak{h}$ (considered as elements $x[1],y[1]\in \mathfrak{h}[1]$ with a degree shift) and homogeneous elements $a_1,\ldots,a_k\in \mathfrak{a}$. Up to the permutations of the above entries,
all the other linear maps vanish. Moreover, we have $\mathfrak{a}\subset \mathfrak{h}[1]\oplus \mathfrak{a}$ is a curved $L_{\infty}$-subalgebra.
\end{thm}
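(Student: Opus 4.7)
The plan is to verify the curved $L_\infty$-relation \eqref{a1} directly for the derived brackets in (i) and (ii). This is Voronov's original argument \cite{voro}; the conceptual content is modest but the bookkeeping of Koszul signs is the main obstacle. Three inputs drive everything: (a) $\mathfrak{a}$ is an abelian graded Lie subalgebra, (b) $\ker(P)\subset \mathfrak{g}$ is a graded Lie subalgebra, and (c) $[\Delta,\Delta]=0$.

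For part (i), I would first establish graded symmetry of each $l_k$ on $\mathfrak{a}^{\otimes k}$. For any $Y\in\mathfrak{g}$ and $a,b\in\mathfrak{a}$, the graded Jacobi identity gives
\[ [[Y,a],b] = (-1)^{|a||b|}[[Y,b],a] + [Y,[a,b]], \]
and the last term vanishes because $\mathfrak{a}$ is abelian. Iterating this adjacent swap shows that $l_k(a_1,\ldots,a_k) = P[\cdots[[\Delta,a_1],a_2],\ldots,a_k]$ is graded symmetric. Next, to verify \eqref{a1} itself, I would expand each inner $l_i(a_{\sigma(1)},\ldots,a_{\sigma(i)})$ as a derived bracket and decompose the underlying element of $\mathfrak{g}$ as its $P$-image in $\mathfrak{a}$ plus its $\ker(P)$-complement. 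Because $\ker(P)$ is a subalgebra, the $\ker(P)$-part can be pushed through the outer iterated bracket against $\Delta$, so that summing over $i$ and over shuffles $\sigma$ reassembles all contributions into iterated brackets of the form $P[\cdots[[[\Delta,\Delta],a_{\tau(1)}],\ldots],a_{\tau(N)}]$, which vanish by $[\Delta,\Delta]=0$. The boundary case $i=0$ (producing $l_0=P(\Delta)$) accounts exactly for the residual $\Delta$-on-$\Delta$ contribution.

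For part (ii), the verification on $\mathfrak{h}[1]\oplus\mathfrak{a}$ is organized by the number of $\mathfrak{h}[1]$-inputs among $x_1,\ldots,x_N$ in \eqref{a1}. Since no operation $\widetilde{l}_k$ admits more than two $\mathfrak{h}[1]$-inputs (and only $\widetilde{l}_2$ admits exactly two), the sum splits into three cases. The zero-input case reduces to part (i). In the one-input case, with inputs $x[1],a_1,\ldots,a_{N-1}$, I would run the same derived-bracket manipulation as in part (i), now with $x$ playing the role of $\Delta$; the hypothesis $[\Delta,\mathfrak{h}]\subset\mathfrak{h}$ guarantees that $\widetilde{l}_1(x[1]) = [\Delta,x][1] + P(x)$ is well-defined on $\mathfrak{h}[1]\oplus\mathfrak{a}$, and the $P(x)$-component precisely compensates for the fact that $[x,x]$ is not a priori zero. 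The two-input case $\widetilde{l}_2(x[1],y[1]) = (-1)^{|x|}[x,y][1]$ reduces, via the graded Leibniz identity $[\Delta,[x,y]] = [[\Delta,x],y] + (-1)^{|x|}[x,[\Delta,y]]$, to the statement that $\widetilde{l}_1$ is a graded derivation of $\widetilde{l}_2$ on $\mathfrak{h}[1]$, and compatibility with the operations $\widetilde{l}_k(x[1],a_1,\ldots,a_{k-1})$ follows from graded Jacobi applied to $x,y,a_i$.

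The final assertion that $\mathfrak{a}\hookrightarrow \mathfrak{h}[1]\oplus\mathfrak{a}$ is a curved $L_\infty$-subalgebra is immediate: the restriction of each $\widetilde{l}_k$ to $\mathfrak{a}^{\otimes k}$ coincides with the $l_k$ of part (i), and the image lies in $\mathfrak{a}$, so closure and the $L_\infty$-relations are inherited from part (i). The hard part, in both (i) and (ii), is an explicit termwise cancellation between $(n-i+1,i)$-shuffle contributions; I expect this to be the main technical obstacle, and I would handle it by a careful induction on $N$ coupled with the observation that every non-canceling term is proportional to $P[\cdots[[\Delta,\Delta],\cdots]]$ or its analogue with $\Delta$ replaced by $x$.
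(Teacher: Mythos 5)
The paper offers no proof of Theorem \ref{Tcvd}; it is recalled verbatim from Voronov's derived-bracket construction \cite{voro} (see also \cite{getzler,fre} for the curved and $\mathfrak{h}[1]\oplus\mathfrak{a}$ variants), so there is no in-paper argument to measure you against. On its own terms your outline does follow the standard route --- graded symmetry from the Jacobi identity plus abelianness of $\mathfrak{a}$ (your adjacent-swap computation is correct), and the higher identities \eqref{a1} from a cancellation terminating in brackets of $[\Delta,\Delta]$ --- but two points keep it from being a proof rather than a plan.

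First, the justification ``because $\mathrm{ker}(P)$ is a subalgebra, the $\mathrm{ker}(P)$-part can be pushed through the outer iterated bracket against $\Delta$'' is not literally true: $[\mathrm{ker}(P),\mathfrak{a}]$ need not lie in $\mathrm{ker}(P)$, and in the curved case $\Delta\notin\mathrm{ker}(P)$. What the hypotheses actually give is the identity $P[u,v]=P[Pu,v]+P[u,Pv]$ for all $u,v\in\mathfrak{g}$ (decompose $u$ and $v$, discard $[(1-P)u,(1-P)v]\in\mathrm{ker}(P)$ and $[Pu,Pv]=0$); it is this derivation-like property of $P$, applied repeatedly inside the iterated brackets, that makes the shuffle sum telescope onto $P[\cdots[[\Delta,\Delta],a_{\tau(1)}],\ldots,a_{\tau(N)}]$. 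That telescoping \emph{is} the theorem --- it is Voronov's main computation, not bookkeeping --- and your proposal defers it. Second, in part (ii) the stratification by the number of $\mathfrak{h}[1]$-inputs has four nontrivial strata, not three: the composite $\widetilde{l}_2\big(\widetilde{l}_2(x[1],y[1]),z[1]\big)$ consumes three $\mathfrak{h}[1]$-inputs, so the stratum with exactly three such inputs (and no $\mathfrak{a}$-inputs) survives and reduces to the graded Jacobi identity of $\mathfrak{h}$; it is harmless but must be accounted for. Your closing observation --- that $\mathfrak{a}\subset\mathfrak{h}[1]\oplus\mathfrak{a}$ is a curved $L_\infty$-subalgebra because the restricted operations close in $\mathfrak{a}$ and reproduce part (i) --- is correct as stated.
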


\medskip

\section{Quasi-twilled associative algebras}\label{sec3}

%The notion of a matched pair of associative algebras (also known as a twilled associative algebra) is a key object in associative bialgebra theory. 
In this section, we consider quasi-twilled associative algebras \cite{uchino-t,agore2} which are the quasi-analogue of matched pairs of associative algebras. We give the Maurer-Cartan characterization of quasi-twilled associative algebras and as a result, we define their cohomology theory.

%\begin{defn}
%    A {\bf matched pair of associative algebras}  \begin{itemize}
%        \item[(i)] 
%    \item[(iii)] For all $a,b\in A$, $ x,y\in B$, the following compatibility conditions are hold: 
%\begin{align*}
 %         a\vartriangleright (x \cdot_B y) =&~ (a\vartriangleright x)\cdot_B y + (a\leftharpoonup x)\vartriangleright  y,\\
%(x \cdot_B y) \vartriangleleft a =&~ x \cdot_B(y\vartriangleleft a) + x\vartriangleleft (y \rightharpoonup a),\\
%x\vartriangleleft(a\leftharpoonup y) + x \cdot_B (a \vartriangleright y) =&~ (x\rightharpoonup a) \vartriangleright y + (x\vartriangleleft a)\cdot_B y, \\
%x \rightharpoonup (a\cdot _A b) =&~ (x\rightharpoonup a)\cdot_A b + (x\vartriangleleft a) \rightharpoonup b,\\
%(a\cdot _A b)\leftharpoonup x =&~ a \cdot_A (b\leftharpoonup x) + a \leftharpoonup (b \vartriangleright x),\\
%a \leftharpoonup (x \vartriangleleft b) + a \cdot_A (x\rightharpoonup b) =&~ (a\vartriangleright x) \rightharpoonup b + (a\leftharpoonup x)\cdot_A b.
% \end{align*}
% \end{itemize}
%\end{defn}
%Let $(A, B, \vartriangleright, \vartriangleleft, \rightharpoonup, \leftharpoonup )$ be a matched pair of associative algebras. Then the direct sum $A\oplus B$ can be given an associative algebra structure with the multiplication 

 %\begin{remark}

%\medskip

%In the following, we consider the notion of a quasi-twilled associative algebra. This is a weaker notion than a twilled associative algebra where we only assume that $B$ is a subalgebra of an associative algebra structure on $A\oplus B$. 
\begin{defn} \cite{uchino-t,agore2}\label{dqtal}
    A {\bf quasi-twilled associative algebra} is an associative algebra $(\mathbb{A},\cdot_\mathbb{A})$ whose underlying vector space $\mathbb{A}$ has a direct sum decomposition $\mathbb{A} = A \oplus B$ such that $B$ is a subalgebra of $(\mathbb{A},\cdot_\mathbb{A})$. 
\end{defn}

%This definition is equivalent to the following one. 

\begin{thm}\label{explicit-qtaa}
A quasi-twilled associative algebra is equivalent to a tuple $(A_\mu, B_\nu, \vartriangleright, \vartriangleleft, \rightharpoonup, \leftharpoonup, \theta )$ in which 

\begin{itemize}
    \item $A$ is a vector space endowed with a bilinear map $\mu : A \times A \rightarrow A, ~ (a, b) \mapsto a \cdot_A b$,
    \item $B$ is a vector space endowed with a bilinear associative multiplication $\nu : B \times B \rightarrow B, ~ (x, y) \mapsto x \cdot_B y$,
\end{itemize}
and
\begin{align*}
    \vartriangleright ~ : A \times B\rightarrow B, \quad  \vartriangleleft ~ : B \times A\rightarrow B, \quad \rightharpoonup ~ : B \times A\rightarrow A, \quad  \leftharpoonup ~ : A \times B\rightarrow A , \quad \theta: A \times A\rightarrow B  
    \end{align*} 
    are bilinear maps such that $(A, \rightharpoonup, \leftharpoonup)$ is a bimodule over the associative algebra $B_\nu$ and for all $a,b,c\in A$, $ x,y\in B$ the following compatibility conditions are hold:
 \begin{align} \label{b1}
    a\cdot_A(b\cdot_A c) + a \leftharpoonup \theta(b,c)=&~
(a\cdot_A b)\cdot_A c +  \theta(a,b)\rightharpoonup c,\\ \label{b2}
(a\cdot_A b)\vartriangleright x + \theta(a,b)\cdot_B x =&~ a\vartriangleright (b\vartriangleright x)+ \theta(a,b\leftharpoonup x),\\ \label{b3}
x\vartriangleleft (a\cdot_A b) + x\cdot_B\theta(a,b) =&~ (x\vartriangleleft a)\vartriangleleft b+ \theta(x\rightharpoonup a, b),\\ \label{b4}
a\vartriangleright (x\vartriangleleft b) + \theta(a,x\rightharpoonup b)=&~(a\vartriangleright x)\vartriangleleft b + \theta(a\leftharpoonup x, b),\\ \label{b5}
 a\vartriangleright (x \cdot_B y) =&~ (a\vartriangleright x)\cdot_B y + (a\leftharpoonup x)\vartriangleright  y,\\ \label{b6}
(x \cdot_B y) \vartriangleleft a =&~ x \cdot_B(y\vartriangleleft a) + x\vartriangleleft (y \rightharpoonup a),\\ \label{b7}
x\vartriangleleft(a\leftharpoonup y) + x \cdot_B (a \vartriangleright y) =&~ (x\rightharpoonup a) \vartriangleright y + (x\vartriangleleft a)\cdot_B y, \\ \label{b8}
x \rightharpoonup (a\cdot _A b) =&~ (x\rightharpoonup a)\cdot_A b + (x\vartriangleleft a) \rightharpoonup b,\\ \label{b9}
(a\cdot _A b)\leftharpoonup x =&~ a \cdot_A (b\leftharpoonup x) + a \leftharpoonup (b \vartriangleright x),\\ \label{b10}
a \leftharpoonup (x \vartriangleleft b) + a \cdot_A (x\rightharpoonup b) =&~ (a\vartriangleright x) \rightharpoonup b + (a\leftharpoonup x)\cdot_A b,\\ \label{b11}
\theta(a,b\cdot_A c) + a\vartriangleright \theta(b,c)=&~ \theta(a\cdot_A b,c)+ \theta(a,b)\vartriangleleft c.
 \end{align}
\end{thm}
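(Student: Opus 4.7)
The plan is to set up a one-to-one correspondence by decomposing the multiplication $\cdot_\mathbb{A}$ according to the direct sum $\mathbb{A}=A\oplus B$ and then comparing the associativity of $\cdot_\mathbb{A}$ with the 11 listed identities plus the bimodule axioms.

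First I would fix the projections $\pi_A:\mathbb{A}\to A$ and $\pi_B:\mathbb{A}\to B$. Given a quasi-twilled associative algebra, I would set $a\cdot_A b:=\pi_A(a\cdot_\mathbb{A} b)$, $\theta(a,b):=\pi_B(a\cdot_\mathbb{A} b)$, $a\leftharpoonup x:=\pi_A(a\cdot_\mathbb{A} x)$, $a\vartriangleright x:=\pi_B(a\cdot_\mathbb{A} x)$, $x\rightharpoonup a:=\pi_A(x\cdot_\mathbb{A} a)$, $x\vartriangleleft a:=\pi_B(x\cdot_\mathbb{A} a)$, and $x\cdot_B y:=x\cdot_\mathbb{A} y$. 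The defining property that $B\subset\mathbb{A}$ is a subalgebra is exactly what forces the absence of an $A$-component in $x\cdot_\mathbb{A} y$, so no bilinear map $B\times B\to A$ is required.

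Next I would exploit bilinearity to reduce associativity $(u\cdot_\mathbb{A} v)\cdot_\mathbb{A} w=u\cdot_\mathbb{A}(v\cdot_\mathbb{A} w)$ to the eight cases $(u,v,w)\in \{A,B\}^{3}$, and then split each resulting identity into its $A$- and $B$-components. A systematic case-by-case expansion would yield: the case $(B,B,B)$ gives associativity of $\nu$; the case $(A,A,A)$ splits into \eqref{b1} and \eqref{b11}; the case $(A,A,B)$ splits into \eqref{b9} and \eqref{b2}; the case $(A,B,A)$ splits into \eqref{b10} and \eqref{b4}; the case $(B,A,A)$ splits into \eqref{b8} and \eqref{b3}; the case $(A,B,B)$ splits into the right $B$-module axiom $(a\leftharpoonup x)\leftharpoonup y=a\leftharpoonup(x\cdot_B y)$ and \eqref{b5}; the case $(B,A,B)$ splits into the compatibility $x\rightharpoonup(a\leftharpoonup y)=(x\rightharpoonup a)\leftharpoonup y$ and \eqref{b7}; the case $(B,B,A)$ splits into the left $B$-module axiom $(x\cdot_B y)\rightharpoonup a=x\rightharpoonup(y\rightharpoonup a)$ and \eqref{b6}. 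Thus the three surviving $A$-projections are precisely the conditions making $(A,\rightharpoonup,\leftharpoonup)$ a $B_\nu$-bimodule, while the eleven $B$-projections (together with the $(A,A,A)$-case split) are the identities \eqref{b1}--\eqref{b11}.

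Conversely, starting from a tuple $(A_\mu,B_\nu,\vartriangleright,\vartriangleleft,\rightharpoonup,\leftharpoonup,\theta)$ satisfying all the stated axioms, I would define
\[
(a+x)\cdot_\mathbb{A}(b+y):=\bigl(a\cdot_A b+a\leftharpoonup y+x\rightharpoonup b\bigr)+\bigl(\theta(a,b)+a\vartriangleright y+x\vartriangleleft b+x\cdot_B y\bigr),
\]
which manifestly keeps $B$ a subalgebra, and then simply reverse the bookkeeping above to check associativity on each of the eight tri-types. Nothing here is deep: the only real work is the combinatorial bookkeeping that matches each of the sixteen homogeneous pieces of the associativity equation with either one of \eqref{b1}--\eqref{b11}, one of the three bimodule axioms, or the associativity of $\nu$. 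The main obstacle is therefore organizational rather than conceptual—one has to be careful not to conflate, for instance, equation \eqref{b4} (from the $B$-projection of $(A,B,A)$) with equation \eqref{b7} (from the $B$-projection of $(B,A,B)$), since both mix $\theta$ with the four actions in similar-looking ways.
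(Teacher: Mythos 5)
Your proposal is correct and follows essentially the same route as the paper, which merely sketches this argument: write $\cdot_\mathbb{A}$ in the form forced by the decomposition and the subalgebra condition on $B$, then unpack associativity over the eight tri-types; your case-by-case assignment of identities \eqref{b1}--\eqref{b11} and the three bimodule axioms checks out exactly. (Only your closing summary sentence is slightly loose --- \eqref{b1}, \eqref{b8}, \eqref{b9}, \eqref{b10} arise as $A$-projections, not $B$-projections --- but the detailed case list preceding it is accurate.)
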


\begin{proof}
    Let $(\mathbb{A}, \cdot_\mathbb{A})$ be a quasi-twilled associative algebra with a decomposition $\mathbb{A} = A \oplus B$. Since $B$ is a subalgebra of $(\mathbb{A}, \cdot_\mathbb{A})$, it follows that the multiplication $~\cdot_\mathbb{A}~$ is of the form
    \begin{align}\label{omega-a-map}
(a,x) \cdot_\mathbb{A} (b, y) = \big( a \cdot_A b + a \leftharpoonup y + x \rightharpoonup b ~ \!,~ \! x \cdot_B y + a \vartriangleright y + x \vartriangleleft b + \theta (a, b)  \big),
\end{align}
for some bilinear maps $\mu, \nu, \vartriangleright, \vartriangleleft, \rightharpoonup, \leftharpoonup , \theta$. Finally, $~\cdot_\mathbb{A}~$ is associative if and only if the above bilinear maps satisfy all the conditions in the Theorem.
\end{proof}

\medskip

%It follows that any twilled associative algebra is a particular case of a quasi-twilled associative algebra in which $\theta = 0.$ 

%In the terminology of Definition \ref{dqtal}, a matched pair of associative algebras is a quasi-twilled associative algebra $(\mathbb{A},\cdot_\mathbb{A})$ in which $A$ and $B$ are both subalgebras of $\mathbb{A} = A \oplus B$.

%There are many examples of matched pairs of associative algebras (hence they are examples of quasi-twilled associative algebras) that appeared naturally in many contexts. These are listed below. We will come back to these examples when we define deformations maps in quasi-twilled associative algebras.

There are various examples of quasi-twilled associative algebras that are important for our paper. We will return to these examples when we define deformations maps in quasi-twilled associative algebras.

\begin{exam}\label{ex1}
  Let $A$ and $B$ be two associative algebras. Then the direct sum $A\oplus B$ endowed with the direct product 
  \begin{align*}
      (a,x)\odot (b,y) := (a \cdot_A b,~ \!  x \cdot_B y),~~~~\mathrm{ for~all~} (a,x),(b,y)\in A\oplus B
  \end{align*} 
  is an associative algebra. Then $(A \oplus B, \odot)$ is a quasi-twilled associative algebra.
\end{exam}
\begin{exam}\label{ex2}
 Let $A$ be an associative algebra and $(B, \vartriangleright, \vartriangleleft)$ be an $A$-bimodule. Consider the direct sum $A\oplus B$ with the semidirect product operation \begin{align*}
      (a,x)\ltimes (b,y) := (a \cdot_A b,~\!  a \vartriangleright y + x\vartriangleleft b ),~~~~\mathrm{ for~all~} (a,x),(b,y)\in A\oplus B.
  \end{align*} 
  Then $(A\oplus B,\ltimes)$ is a quasi-twilled associative algebra.
\end{exam} 
\begin{exam}\label{ex3}
Let $A$ be an associative algebra and $B$ be an associative algebra object in the category of $A$-bimodules. Then $A\oplus B$ endowed with the product \begin{align*}
      (a,x) \odot_\ltimes (b,y) := (a \cdot_A b,~ \!  x \cdot_B y+a \vartriangleright y + x\vartriangleleft b ),~~~~\mathrm{ for~all~} (a,x),(b,y)\in A\oplus B
  \end{align*} 
  is an associative algebra. This forms a quasi-twilled associative algebra.
\end{exam}
\begin{exam}\label{ex4}
 Let $A$ be an associative algebra and $(B, \vartriangleright)$ be a left $A$-module. Then $(A\oplus B,\cdot_\vartriangleright)$ is a quasi-twilled associative algebra, where  \begin{align*}
      (a,x)\cdot_\vartriangleright (b,y) := (a \cdot_A b,~ \!  a \vartriangleright y ),~~~~\mathrm{ for~all~} (a,x),(b,y)\in A\oplus B.
  \end{align*}   
\end{exam}
\begin{exam}\label{ex5}
 Let $A$ be an associative algebra and $(B, \vartriangleleft)$ be a right $A$-module. Then $(A\oplus B,\cdot_\vartriangleleft)$ is a quasi-twilled associative algebra, where  \begin{align*}
      (a,x)\cdot_\vartriangleleft (b,y) := (a \cdot_A b,~ \! x \vartriangleleft b ),~~~~\mathrm{ for~all~} (a,x),(b,y)\in A\oplus B.
  \end{align*}   
\end{exam}

\begin{exam}\label{matched-quasi}
   A {\em matched pair of associative algebras} \cite{agore,agore2}  is a tuple $(A_\mu, B_\nu, \vartriangleright, \vartriangleleft, \rightharpoonup, \leftharpoonup )$ consisting of two associative algebras $A_\mu$ and $B_\nu$ equipped with four bilinear maps \begin{align*}
    \vartriangleright ~ : A \times B\rightarrow B, \quad \vartriangleleft ~: B \times A\rightarrow B, \quad \rightharpoonup  ~: B \times A\rightarrow A, \quad\leftharpoonup  ~: A \times B\rightarrow A   
    \end{align*}
    such that $(B,\vartriangleright, \vartriangleleft)$ is an $A_\mu$-bimodule, $(A,\rightharpoonup, \leftharpoonup)$ is a $B_\nu$-bimodule and a set of compatibility conditions hold which ensure that the direct sum $A \oplus B$ endowed with the multiplication
\begin{align*}
    (a,x) \Join (b,y): = \big(a\cdot_A b+ a\leftharpoonup y + x \rightharpoonup b ~ \! ,~ \! x\cdot_B y + a \vartriangleright y + x \vartriangleleft b\big), \text{ for } (a,x), (b,y) \in A\oplus B
\end{align*}
 is an associative algebra. It follows that $(A\oplus B,{\Join})$ is a quasi-twilled associative algebra.

 Conversely, a matched pair of associative algebras is a quasi-twilled associative algebra $(\mathbb{A},\cdot_\mathbb{A})$ in which $A$ and $B$ are both subalgebras of $\mathbb{A} = A \oplus B$.
\end{exam}
 
 %Moreover, it follows from (\ref{a2}) that both $A$ and $B$ are subalgebras of $(A \oplus B, \Join)$. 
 % Conversely, let $A$ and $B$ be two vector spaces. Suppose there is an associative algebra structure $\Join$ on the direct sum $A\oplus B$ for which $A$ and $B$ are both subalgebras. Then there exist bilinear maps $\vartriangleright ~: A \times B\rightarrow B,~~ \vartriangleleft  ~ : B \times A\rightarrow B, ~~ \rightharpoonup ~ : B \times A\rightarrow A,~~ \leftharpoonup ~ : A \times B\rightarrow A$ that makes the tuple $(A, B, \vartriangleright, \vartriangleleft, \rightharpoonup, \leftharpoonup )$ into a matched pair of associative algebras. Explicitly, these bilinear maps are given by \begin{align*}
      %   (a,0)\Join (0,x) = (a \leftharpoonup x,~ a \vartriangleright x) ~~~ \text{ and } ~~~
       %  (0,x) \Join (a,0) = (x \rightharpoonup a,~ x \vartriangleleft a),
    % \end{align*}
% for~all $a\in A,x\in B.$
%Thus, a matched pair of associative algebras (or a twilled associative algebra) can be realized as an associative algebra structure on the direct sum $A\oplus B$ of two vector spaces for which both $A$ and $B$ are subalgebras.
%\end{exam}

%These are examples of quasi-twilled associative algebras which are in general not twilled associative algebras. Some of these examples are given below.

\begin{exam}\label{ex6}
 Let $A$ be an associative algebra. Then $(A\oplus A, \Box)$ is a quasi-twilled associative algebra, where \begin{align*}
      (a, a') ~\Box~(b,b') := (a \cdot_A b'+ a' \cdot_A b,~ \! a \cdot_A b + a' \cdot_A b' ),~~~~\mathrm{ for~all~} a,a',b,b'\in A.
  \end{align*}
\end{exam}

\begin{exam}\label{ex7}
 Let $A$ be an associative algebra and $(B,\vartriangleright, \vartriangleleft)$ be an $A$-bimodule. For any Hochschild $2$-cocycle $\theta\in Z^2_{\mathrm{Hoch}}(A, B)$, one can define an associative algebra structure on the direct sum $A\oplus B$ with the product \begin{align*}
      (a,x)\ltimes_\theta (b,y) := (a \cdot_A b,~ \! a \vartriangleright y + x\vartriangleleft b +\theta(a,b)),
  \end{align*}
for all $(a,x),(b,y)\in A\oplus B$.  Then $(A\oplus B,\ltimes_\theta)$ is a quasi-twilled associative algebra.
\end{exam}
\begin{exam}\label{ex8}
    This example is a particular case of the previous one. For any associative algebra $A$, the pair $(A \oplus A , \ltimes_{-\mu})$ is a quasi-twilled associative algebra, where \begin{align*}
      (a, a') \ltimes_{-\mu} (b,b') := (a \cdot_A b,~a \cdot_A b' + a'\cdot_A b -a\cdot_A b),~~~~\mathrm{ for~all~} a,a',b,b'\in A.
      \end{align*}
       \end{exam}

       \medskip
       
In the following, we give the Maurer-Cartan characterization of quasi-twilled associative algebras. To do so, we first recall the classical Gerstenhaber bracket on the space of all multilinear maps on a vector space \cite{gers-ring}.
Let $A$ be any vector space (not necessarily an associative algebra). For any $f\in \mathrm{Hom}(A^{\otimes m}, A)$ and $g\in \mathrm{Hom}(A^{\otimes n}, A)$, their {\em Gerstenhaber bracket} $[f,g]\in \mathrm{Hom}(A^{\otimes {m+n-1}}, A)$ is defined by \begin{align*}
    [f,g] := f\diamond g - (-1)^{(m-1)(n-1)} ~ \! g \diamond f,
\end{align*}
where \begin{align*}
(f\diamond g) (a_1,\ldots,a_{m+n-1}) = \displaystyle\sum_{i=1}^{m} (-1)^{(i-1)(n-1)} ~f\big(a_1,\ldots,a_{i-1},g(a_i,\ldots,a_{i+n-1}),a_{i+n},\ldots,a_{m+n-1}\big),     
\end{align*}
for $a_1,\ldots,a_{m+n-1} \in A$. Then the space $\oplus_{n=0}^\infty \mathrm{Hom} (A^{\otimes {n+1}}, A)$ endowed with the above Gerstenhaber bracket $[~,~]$ is a graded Lie algebra. Note that an associative algebra structure on the vector space $A$ is equivalent to having a Maurer-Cartan element of the graded Lie algebra $\big(\oplus_{n=0}^\infty \mathrm{Hom} (A^{\otimes {n+1}}, A),[~,~]\big)$.

Next, let $A$ and $B$ be two vector spaces. Let $\mathcal{A}^{k,l}$ be the direct sum of all possible $(k+l)$ tensor powers of $A$ and $B$ in which $A$ appears $k$ times and $B$ appears $l$ times. For example, \begin{align*} \mathcal{A}^{2,0} =~& A \otimes A,\quad \mathcal{A}^{1,1} = (A \otimes B) \oplus (B \otimes A), \quad \mathcal{A}^{0,2} = B \otimes B \quad \text{ and }\\  & \mathcal{A}^{2,1} = (A \otimes A \otimes B) \oplus (A \otimes B \otimes A) \oplus (B \otimes A \otimes A) ~~~~ \text{etc.}
    \end{align*}
    Note that there is an isomorphism $(A\oplus B)^{\otimes n+1} \cong \oplus_{ \substack{k+l=n+1\\ k,l \geq 0}} \mathcal{A}^{k,l}$, for any $n\geq 0$. Now, consider the graded Lie algebra 
    \begin{align*}
         \mathfrak{g}= \big(\oplus_{n=0}^\infty \mathrm{Hom} \big((A\oplus B)^{\otimes n+1}, A\oplus B\big),[~,~] \big)
         \end{align*}
         on the space of all multilinear maps on the vector space $A\oplus B$ with the Gerstenhaber bracket. Recall that \cite{uchino-t} a map $f\in \mathrm{Hom} \big((A\oplus B)^{\otimes n+1}, A\oplus B\big)$ is said to have {\em bidegree} $k\lvert l$ with $k+l=n$ and $k,l \geq -1$ if it satisfies \begin{align*}
        f(\mathcal{A}^{k+1,l})\subset A,\quad f(\mathcal{A}^{k,l+1})\subset B ~~~  \text{ and } ~~~ f = 0~~ \text{otherwise}.
    \end{align*} 
    We denote the set of all maps of bidegree $k\lvert l$ by the notation $C^{k\lvert l}$. Then we have \begin{align*}
     C^{k\lvert l} \cong  \mathrm{Hom}(\mathcal{A}^{k+1,l},A) \oplus \mathrm{Hom}(\mathcal{A}^{k,l+1},B).   
    \end{align*} 
    Moreover, \begin{align}\label{a3}
        \mathfrak{g}_n = \mathrm{Hom} \big((A\oplus B)^{\otimes n+1}, A\oplus B\big) \cong ~ C^{n+1\lvert -1} \oplus C^{n\lvert 0} \oplus \cdots \oplus C^{0\lvert n} \oplus C^{-1\lvert n+1}.
    \end{align}
    The following result has been proved in \cite{uchino-t}.
    \begin{lemma}\label{Lkl}
    Let $f\in C^{k_f\lvert l_f}$ and $g\in C^{k_g\lvert l_g}$. Then we have 
        $[f,g]\in C^{k_f+k_g\lvert l_f+l_g}$.
    \end{lemma}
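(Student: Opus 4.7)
The plan is to reduce the claim to a statement about the one-sided composition $\diamond$ and then track how $\diamond$ transports bidegree. Since $[f,g]=f\diamond g - (-1)^{(m_f-1)(m_g-1)}\,g\diamond f$ where $m_f = k_f+l_f+1$ and $m_g = k_g+l_g+1$, and since the sign prefactor is a scalar, it suffices to show that both $f\diamond g$ and $g\diamond f$ lie in $C^{k_f+k_g\,|\,l_f+l_g}$. By symmetry of the argument in $f$ and $g$, I only need to verify this for $f\diamond g$.

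First I would record the definition of bidegree in the form I want to use: a multilinear map $h$ has bidegree $k\,|\,l$ iff $h$ vanishes on every tensor monomial except those in $\mathcal{A}^{k+1,l}$ (in which case the output is forced to lie in $A$) and those in $\mathcal{A}^{k,l+1}$ (in which case the output lies in $B$). In particular the arity is $k+l+1$. Fix an input monomial in $\mathcal{A}^{p,q}$ (so $p$ of the $p+q = m_f+m_g-1$ tensor factors lie in $A$ and $q$ in $B$), and inspect a single summand $f(a_1,\ldots,a_{i-1},g(a_i,\ldots,a_{i+m_g-1}),a_{i+m_g},\ldots)$ of $(f\diamond g)(a_1,\dots,a_{m_f+m_g-1})$. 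Let $p'$ and $q'$ be the number of $A$- and $B$-arguments that $g$ consumes in this term.

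Next comes the bookkeeping, which is the only real content. For the summand to be nonzero, $g$'s inputs must match one of $g$'s two admissible shapes, so either $(p',q')=(k_g+1,l_g)$ with $g$ outputting into $A$, or $(p',q')=(k_g,l_g+1)$ with $g$ outputting into $B$. In either case the argument list that $f$ sees contains $p-p'+\varepsilon_A$ elements of $A$ and $q-q'+\varepsilon_B$ elements of $B$, where $(\varepsilon_A,\varepsilon_B)$ is $(1,0)$ or $(0,1)$ according to where $g$'s output lands. A short computation shows that in all four combinations one gets $(p-k_g,\,q-l_g)$ for the number of $A$- and $B$-inputs feeding into $f$. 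For $f$ then to contribute, $(p-k_g,q-l_g)$ must equal $(k_f+1,l_f)$ or $(k_f,l_f+1)$, yielding the two admissible global shapes
\[
(p,q)=(k_f+k_g+1,\,l_f+l_g) \quad\text{with output in } A,
\]
\[
(p,q)=(k_f+k_g,\,l_f+l_g+1) \quad\text{with output in } B.
\]
Every other monomial gives zero. This is exactly the vanishing pattern defining $C^{k_f+k_g\,|\,l_f+l_g}$, so $f\diamond g$ lies in this space and hence so does $[f,g]$.

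The only thing that could go wrong is arithmetic in the case analysis, but the count of $A$'s contributed by $g$'s output conveniently cancels against the $A$ it consumed (and similarly for $B$), which is why the two shapes of $g$ feed back into the same pair of shapes for $f$. There is no subtlety with signs or associativity here; the statement is purely a parity/counting fact about how the insertion in $\diamond$ redistributes the $A$ and $B$ factors.
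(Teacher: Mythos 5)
Your proof is correct, and the bookkeeping is exactly right: the key point that $g$'s output replaces the factors it consumed, so that $f$ always sees $(p-k_g,\,q-l_g)$ regardless of which of $g$'s two admissible shapes was matched, is precisely what makes the bidegree additive under $\diamond$ and hence under $[~,~]$. The paper itself gives no proof of this lemma --- it simply cites Uchino --- and the cited argument is the same insertion-counting computation you carried out, so your write-up supplies the standard argument; the only point worth flagging is the boundary convention $\mathcal{A}^{p,q}=0$ when $p<0$ or $q<0$, which is needed so that the case analysis degenerates correctly for bidegrees with $k=-1$ or $l=-1$ (such as $\widetilde{\theta}\in C^{2\lvert -1}$).
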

    It follows from the above lemma that $\oplus_{n=0}^\infty C^{n\lvert 0}$ and $\oplus_{n=0}^\infty C^{0\lvert n}$ are both graded Lie subalgebras of $\mathfrak{g}$. We will now construct two new graded Lie subalgebras of $\mathfrak{g}$. For any $n\geq 0$, we define \begin{align*}
        \mathscr{M}_n:=~& C^{n\lvert 0} \oplus C^{n-1\lvert 1} \oplus \cdots \oplus C^{1\lvert n-1} \oplus C^{0\lvert n} \subset \mathfrak{g}_n,\\
    \mathscr{Q}_n:=~& C^{n+1\lvert -1} \oplus C^{n\lvert 0}\oplus C^{n-1\lvert 1} \oplus \cdots \oplus C^{1\lvert n-1} \oplus C^{0\lvert n} = C^{n+1 | -1} \oplus \mathscr{M}_n \subset \mathfrak{g}_n,\\
    \mathscr{R}_n:=~& C^{n\lvert 0}\oplus C^{n-1\lvert 1} \oplus \cdots \oplus C^{1\lvert n-1} \oplus C^{0\lvert n} \oplus C^{-1 \lvert n+1} = \mathscr{M}_n \oplus C^{ -1 | n+1} \subset \mathfrak{g}_n.
    \end{align*}    
 \begin{prop}\label{Pmq}
     With the above notations $\oplus_{n=0}^\infty\mathscr{M}_n$,~$\oplus_{n=0}^\infty\mathscr{Q}_n$ and $\oplus_{n=0}^\infty\mathscr{R}_n$ are all graded Lie subalgebras of $\mathfrak{g}$.
 \end{prop}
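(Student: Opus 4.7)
The plan is to use the bidegree bookkeeping provided by Lemma \ref{Lkl}. The three subspaces are cut out by simple inequalities on the bidegree $(k,l)$: a bihomogeneous map belongs to $\mathscr{M}$ iff $k\geq 0$ and $l\geq 0$; to $\mathscr{Q}$ iff $k\geq 0$ and $l\geq -1$; to $\mathscr{R}$ iff $k\geq -1$ and $l\geq 0$. Any element of $\mathfrak{g}$ decomposes into a finite sum of bihomogeneous components and the Gerstenhaber bracket is bilinear, so it suffices to check, for each subspace, that the bidegree sum $(k_f+k_g,\, l_f+l_g)$ supplied by Lemma \ref{Lkl} still lies in the prescribed range.

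For $\mathscr{M}$ this is immediate, since a sum of nonnegative integers is nonnegative. For $\mathscr{Q}$, the inequality $k_f+k_g\geq 0$ is clear, and the only way the $l$-inequality could fail is when $l_f=l_g=-1$, yielding the forbidden bidegree $(k_f+k_g,\,-2)$. I would rule this out by showing directly that $[f,g]=0$ whenever $f,g\in C^{\bullet | -1}$. Such maps are supported on pure $A$-tensors and take values in $B$; the circle product $f\diamond g$ substitutes $g$'s $B$-valued output into one of $f$'s $A$-slots, but $f$ vanishes on any tensor containing a $B$-factor, so $f\diamond g=0$. The symmetric computation gives $g\diamond f=0$, hence $[f,g]=0$. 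The case of $\mathscr{R}$ is entirely symmetric: the only potentially bad bracket is $[f,g]$ with $f,g\in C^{-1|\bullet}\cong \mathrm{Hom}(B^{\otimes\bullet},A)$, and the same argument (with the roles of $A$ and $B$ swapped) gives $[f,g]=0$.

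I expect no real obstacle beyond bookkeeping: once the subspaces are described by bidegree inequalities, closure under the bracket reduces to (a) the additive closure of nonnegative integers, supplied by Lemma \ref{Lkl}, and (b) the vanishing of brackets within a single boundary column $l=-1$ or $k=-1$, which is a one-line check from the definition of $\diamond$. It is probably cleanest to record the vanishing as a short preliminary observation so that all three closures follow from a uniform bidegree argument.
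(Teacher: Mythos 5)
Your proof is correct and follows essentially the same route as the paper: decompose into bihomogeneous components and use the bidegree additivity of Lemma \ref{Lkl}, the only delicate point being the boundary case $l_f=l_g=-1$ for $\mathscr{Q}$ (resp.\ $k_f=k_g=-1$ for $\mathscr{R}$). The paper disposes of that case by reading Lemma \ref{Lkl} with the convention that $C^{k\lvert l}$ vanishes once a component drops below $-1$, whereas you verify the vanishing of $f\diamond g$ and $g\diamond f$ directly from the definition of $\diamond$; both are fine, and your explicit check merely makes transparent what the paper leaves to the lemma.
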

 
 \begin{proof}
     Let $f\in \mathscr{M}_m$ and $g\in \mathscr{M}_n$. If $f\in C^{i\lvert m-i}$ and $g\in C^{j\lvert n-j}$ (with $0\leq i \leq m$ and $0\leq j \leq n)$ then it follows from Lemma \ref{Lkl} that $[f,g] \in C^{i+j\lvert m+n-i-j} \subset \mathscr{M}_{m+n} $.
     
    For the second part, let $f\in \mathscr{Q}_m$ and $g\in \mathscr{Q}_n$. If $f\in C^{i\lvert m-i}$ and $g\in C^{j\lvert n-j}$ (with $0\leq i \leq m+1$ and $0\leq j \leq n+1)$ then from Lemma \ref{Lkl}, we get that $[f,g] =0$ when $i=m+1$ and $j=n+1$. Otherwise, we have  $[f,g]\in C^{i+j\lvert m+n-i-j}~ (\mathrm{with}~ 0\leq i+j\leq m+n+1) \subset \mathscr{Q}_{m+n}$. Finally, the last part is similar.
 \end{proof}
 
 Next, let $A$ and $B$ be two vector spaces. Suppose there are bilinear maps 
 \begin{align*} 
    &\mu: A \times A\rightarrow A, \qquad  \nu: B \times B\rightarrow B,\\
    \vartriangleright ~: A \times B\rightarrow B, \quad \vartriangleleft ~ : B \times A & \rightarrow B, \quad  \rightharpoonup ~: B \times A\rightarrow A, \quad \leftharpoonup ~ : A \times B\rightarrow A ~~~ \text{ and } ~~~  \theta: A \times A\rightarrow B.
    \end{align*}  
    Then we define elements $\widetilde{\theta}\in C^{2\lvert -1}$, ~$\mu_{_{\vartriangleright,\vartriangleleft}}\in C^{1\lvert 0}$ and $\nu_{_{\rightharpoonup,\leftharpoonup}} \in C^{0\lvert 1} $ by 
    \begin{align*}
     \widetilde{\theta}\big((a,x),(b,y)\big):=~& (0,\theta(a,b)),\\
\mu_{_{\vartriangleright,\vartriangleleft}}\big((a,x),(b,y)\big):=~& (a\cdot_A b, ~ \! a\vartriangleright y+x\vartriangleleft b),\\ \nu_{_{\rightharpoonup,\leftharpoonup}}\big((a,x),(b,y)\big):=~& (a\leftharpoonup y+x\rightharpoonup b, ~ \! x\cdot_B y),
    \end{align*}
    for $(a,x),(b,y)\in A\oplus B.$ Therefore, we obtain an element 
    \begin{align*}
      \Omega  = \widetilde{\theta} + \mu_{_{\vartriangleright,\vartriangleleft}} + \nu_{_{\rightharpoonup,\leftharpoonup}} \in C^{2\lvert -1} \oplus C^{1\lvert 0} \oplus C^{0\lvert 1} = \mathscr{Q}_1.
    \end{align*}
    \begin{thm}
        With the above notations, the tuple $(A_\mu, B_\nu, \vartriangleright, \vartriangleleft, \rightharpoonup, \leftharpoonup,\theta )$ is a quasi-twilled associative algebra if and only if the element $\Omega =\widetilde{\theta} + \mu_{_{\vartriangleright,\vartriangleleft}} + \nu_{_{\rightharpoonup,\leftharpoonup}} \in \mathscr{Q}_1$ is a Maurer-Cartan element of the graded Lie algebra $\mathscr{Q}=(\oplus_{n=0}^\infty\mathscr{Q}^n,[~,~])$.
    \end{thm}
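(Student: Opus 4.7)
The plan is to reduce the claim to the classical correspondence between associative multiplications on a vector space $V$ and Maurer-Cartan elements of the Gerstenhaber graded Lie algebra on $\oplus_{n \geq 0} \mathrm{Hom}(V^{\otimes n+1}, V)$, applied to $V = A \oplus B$. Indeed, by formula \eqref{omega-a-map} in the proof of Theorem \ref{explicit-qtaa}, the bilinear map on $A\oplus B$ encoded by the tuple $(A_\mu, B_\nu, \vartriangleright, \vartriangleleft, \rightharpoonup, \leftharpoonup, \theta)$ is exactly $\Omega = \widetilde{\theta} + \mu_{_{\vartriangleright,\vartriangleleft}} + \nu_{_{\rightharpoonup,\leftharpoonup}}$. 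Hence the tuple gives a quasi-twilled associative algebra precisely when this multiplication is associative, equivalently $\tfrac{1}{2}[\Omega,\Omega] = 0$ in $\mathfrak{g}$. By Proposition \ref{Pmq} the resulting element automatically lies in $\mathscr{Q}$, so it is also the Maurer-Cartan equation in the graded Lie subalgebra $\mathscr{Q}$.

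Next I would expand $\tfrac{1}{2}[\Omega,\Omega]$ and decompose it by bidegree using Lemma \ref{Lkl}. Since $\widetilde\theta \in C^{2|-1}$, $\mu_{_{\vartriangleright,\vartriangleleft}} \in C^{1|0}$ and $\nu_{_{\rightharpoonup,\leftharpoonup}} \in C^{0|1}$, the a priori possible summands of $[\Omega,\Omega]$ land in $C^{4|-2}$, $C^{3|-1}$, $C^{2|0}$, $C^{1|1}$ and $C^{0|2}$. The bracket $[\widetilde\theta, \widetilde\theta]$ vanishes because $\widetilde\theta$ takes values only in the $B$-slot but ignores the $B$-component of its inputs, making $\widetilde\theta \diamond \widetilde\theta = 0$. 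The vanishing of $\tfrac{1}{2}[\Omega,\Omega]$ therefore splits into four independent conditions:
\begin{align*}
    [\widetilde\theta, \mu_{_{\vartriangleright,\vartriangleleft}}] &= 0 \quad \text{in } C^{3|-1}, \\
    \tfrac{1}{2}[\mu_{_{\vartriangleright,\vartriangleleft}}, \mu_{_{\vartriangleright,\vartriangleleft}}] + [\widetilde\theta, \nu_{_{\rightharpoonup,\leftharpoonup}}] &= 0 \quad \text{in } C^{2|0}, \\
    [\mu_{_{\vartriangleright,\vartriangleleft}}, \nu_{_{\rightharpoonup,\leftharpoonup}}] &= 0 \quad \text{in } C^{1|1}, \\
    \tfrac{1}{2}[\nu_{_{\rightharpoonup,\leftharpoonup}}, \nu_{_{\rightharpoonup,\leftharpoonup}}] &= 0 \quad \text{in } C^{0|2}.
\end{align*}

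Finally, each of these four identities is evaluated on monomials $(a_1, x_1) \otimes (a_2, x_2) \otimes (a_3, x_3)$ in $(A \oplus B)^{\otimes 3}$, separated according to how many of the entries lie in $A$ versus $B$. The $C^{0|2}$ component yields the associativity of $\nu$ on $B^{\otimes 3}$ together with the $B_\nu$-bimodule axioms for $(A, \rightharpoonup, \leftharpoonup)$ on $\mathcal{A}^{1,2}$; both are among the standing hypotheses on the tuple. The $C^{1|1}$ component yields \eqref{b5}--\eqref{b7} (from the $B$-valued outputs on $\mathcal{A}^{1,2}$) and \eqref{b8}--\eqref{b10} (from the $A$-valued outputs on $\mathcal{A}^{2,1}$). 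The $C^{2|0}$ component yields \eqref{b1} on $A^{\otimes 3}$ and \eqref{b2}, \eqref{b3}, \eqref{b4} on the three flavours of $\mathcal{A}^{2,1}$, with the $\theta$-dependent corrections supplied by the cross term $[\widetilde\theta, \nu_{_{\rightharpoonup,\leftharpoonup}}]$. The $C^{3|-1}$ component produces \eqref{b11} on $A^{\otimes 3}$.

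The main obstacle is the careful bookkeeping when expanding the Gerstenhaber circle product $\Omega \diamond \Omega$ on mixed tensors: each of the three insertion positions contributes several summands depending on the $A$-or-$B$ type of the other two entries, and one must verify term by term that the pieces regroup into the eleven identities \eqref{b1}--\eqref{b11} plus the standing associativity and bimodule hypotheses. Because $\Omega$ has Gerstenhaber degree $1$, no sign subtleties intervene and the identity $[\Omega,\Omega] = 2(\Omega \diamond \Omega)$ keeps the accounting symmetric and manageable.
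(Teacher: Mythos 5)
Your proposal is correct and follows essentially the same route as the paper: expand $[\Omega,\Omega]$, decompose by bidegree into the four components in $C^{3|-1}$, $C^{2|0}$, $C^{1|1}$, $C^{0|2}$, and match each to the corresponding subset of the identities \eqref{b1}--\eqref{b11} together with the associativity of $\nu$ and the $B_\nu$-bimodule axioms. The only additions beyond the paper's argument are the explicit observation that $[\widetilde\theta,\widetilde\theta]=0$ (which the paper omits silently) and the framing via the classical associativity--Maurer-Cartan correspondence on $A\oplus B$; both are accurate.
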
 
    
    \begin{proof}
        %Note that \begin{align*}
        %    \Omega \big((a,x),(b,y)\big) =~& \big(\widetilde{\theta}+\mu_{_{\vartriangleright,\vartriangleleft}}+\nu_{_{\rightharpoonup,\leftharpoonup}}\big)\big((a,x),(b,y)\big)\\
        %    =~&\big(a\cdot_A b+a\leftharpoonup y+x\rightharpoonup b,~ x\cdot_B y +a\vartriangleright y+x\vartriangleleft b+\theta(a,b)\big),
        %\end{align*}
        %for $(a,x),(b,y)\in A\oplus B$. Moreover, 
        We have \begin{align*}
            &[\Omega,\Omega] \\
            &= [\widetilde{\theta} + \mu_{_{\vartriangleright,\vartriangleleft}} + \nu_{_{\rightharpoonup,\leftharpoonup}} ~ \! , ~ \! \widetilde{\theta} + \mu_{_{\vartriangleright,\vartriangleleft}} + \nu_{_{\rightharpoonup,\leftharpoonup}} ]\\ 
            &= \underbrace{[\widetilde{\theta},\mu_{_{\vartriangleright,\vartriangleleft}}]+[\mu_{_{\vartriangleright,\vartriangleleft}},\widetilde{\theta}]}_{\in C^{3\lvert-1}} +  \underbrace{[\widetilde{\theta},~\nu_{_{\rightharpoonup,\leftharpoonup}}]+[\mu_{_{\vartriangleright,\vartriangleleft}},\mu_{_{\vartriangleright,\vartriangleleft}}]+[\nu_{_{\rightharpoonup,\leftharpoonup}},\widetilde{\theta}]}_{\in C^{2\lvert 0}} +  \underbrace{[\mu_{_{\vartriangleright,\vartriangleleft}},\nu_{_{\rightharpoonup,\leftharpoonup}}]+[\nu_{_{\rightharpoonup,\leftharpoonup}},\mu_{_{\vartriangleright,\vartriangleleft}}]}_{\in C^{1\lvert 1}} + \underbrace{[\nu_{_{\rightharpoonup,\leftharpoonup}},\nu_{_{\rightharpoonup,\leftharpoonup}}]}_{\in C^{0\lvert 2}}.
        \end{align*}
        It follows that $\Omega$ is a Maurer-Cartan element of the graded Lie algebra $\mathscr{Q}$ if and only if
        \begin{align*}[\mu_{_{\vartriangleright,\vartriangleleft}},\widetilde{\theta}]=0,\qquad [\mu_{_{\vartriangleright,\vartriangleleft}},\mu_{_{\vartriangleright,\vartriangleleft}}]+ 2[\nu_{_{\rightharpoonup,\leftharpoonup}},\widetilde{\theta}]=0, \qquad [\mu_{_{\vartriangleright,\vartriangleleft}},\nu_{_{\rightharpoonup,\leftharpoonup}}]=0 \quad \mathrm{and} \quad [\nu_{_{\rightharpoonup,\leftharpoonup}},\nu_{_{\rightharpoonup,\leftharpoonup}}]=0.
        \end{align*}
        It is easy to observe that 
\begin{align*}
&[\mu_{_{\vartriangleright,\vartriangleleft}},\widetilde{\theta}]=0 ~\text{ is equivalent to } (\ref{b11}),\\
&[\mu_{_{\vartriangleright,\vartriangleleft}},\mu_{_{\vartriangleright,\vartriangleleft}}]+ 2[\nu_{_{\rightharpoonup,\leftharpoonup}},\widetilde{\theta}]=0 ~ \text{ is equivalent to the identities } (\ref{b1})\text{-}(\ref{b4}),\\
&[\mu_{_{\vartriangleright,\vartriangleleft}},\nu_{_{\rightharpoonup,\leftharpoonup}}]= 0 ~ \text{ is equivalent to the identities } (\ref{b5})\text{-}(\ref{b10}),\\
&[\nu_{_{\rightharpoonup,\leftharpoonup}},\nu_{_{\rightharpoonup,\leftharpoonup}}]=0 ~\text{ is equivalent that } B_\nu \text{ is an associative algebra and } (A, \rightharpoonup, \leftharpoonup) \text{ is a } B_\nu\text{-bimodule}.
\end{align*}
   Hence $\Omega$ is a  Maurer-Cartan element of the graded Lie algebra $\mathscr{Q}$ if and only if $(A_\mu, B_\nu, \vartriangleright, \vartriangleleft, \rightharpoonup, \leftharpoonup,\theta )$ is a quasi-twilled associative algebra.    
    \end{proof}
    Using the above Maurer-Cartan characterization of a quasi-twilled associative algebra, we will define cohomology. Let $(\mathbb{A}, \cdot_\mathbb{A})$ be a quasi-twilled associative algebra. Suppose it is equivalent to the tuple $(A_\mu, B_\nu, \vartriangleright, \vartriangleleft, \rightharpoonup, \leftharpoonup,\theta )$. For each $n\geq 0$, we define the space of $n$-cochains $C^n( \mathbb{A} )$ by \begin{align*}
        C^0( \mathbb{A} ) :=~& A\oplus B,\\
    C^{n\geq 1}( \mathbb{A} ) :=~& \mathscr{Q}_{n-1} = C^{n\lvert -1} \oplus C^{n-1\lvert 0} \oplus \cdots \oplus C^{0\lvert n-1}.   
    \end{align*}
  Thus an element $F\in C^{n\geq 1}( \mathbb{A} )$ is given by an $(n+1)$ tuple $F= (F_0,F_1,\ldots,F_n)$, where $F_r \in C^{n-r\lvert r-1}$ for $r=0,1,\ldots,n.$ We also define a map $\delta:C^n( \mathbb{A} ) \rightarrow C^{n+1}( \mathbb{A} )$ by \begin{align*}
      \delta(F) = (-1)^{n-1} ~ \! [\widetilde{\theta} + \mu_{_{\vartriangleright,\vartriangleleft}} + \nu_{_{\rightharpoonup,\leftharpoonup}} ~ \!, F_0 + F_1 + \cdots + F_n],~~\mathrm{for}~~F=(F_0,F_1,\ldots,F_n) \in C^n (\mathbb{A}).
  \end{align*}
Explicitly, if $\delta (F) \in C^{n+1} (\mathbb{A})$ is represented by the tuple
$\big(  \delta (F)_0, \delta (F)_1, \ldots, \delta(F)_{n+1}  \big)$ then 
\begin{align*}
    \delta (F)_r = (-1)^{n-1} \big(   [\widetilde{ \theta}, F_{r+1}] + [ \mu_{_{\vartriangleright,\vartriangleleft}}, F_r] + [ \nu_{_{\rightharpoonup,\leftharpoonup}}, F_{r-1} ]  \big), \text{ for } r=0, 1, \ldots, n+1.
\end{align*}
Since the map $\delta$ is induced by the Maurer-Cartan element $\Omega = \widetilde{\theta} + \mu_{_{\vartriangleright,\vartriangleleft}} + \nu_{_{\rightharpoonup,\leftharpoonup}}$, we have $\delta^2 = 0$. The corresponding cohomology groups of the cochain complex $\{ C^\bullet (\mathbb{A}), \delta \} $ are called the {\bf cohomology groups} of the quasi-twilled associative algebra $(\mathbb{A}, \cdot_\mathbb{A})$.

%The cohomology groups of the cochain complex $\{C^{\bullet}(A_\mu, B_\nu, \vartriangleright, \vartriangleleft, \rightharpoonup, \leftharpoonup,\theta),\delta\}$ are called the cohomology groups of the quasi-twilled associative algebra $(A_\mu, B_\nu, \vartriangleright, \vartriangleleft, \rightharpoonup, \leftharpoonup,\theta)$. 

\begin{remark}
    In particular, if $(\mathbb{A}, \cdot_\mathbb{A})$ is a matched pair of associative algebras, we set 
    \begin{align*}
        C^0 (\mathbb{A}) := A \oplus B ~~~~ \text{ and } ~~~~ C^{n \geq 1} (\mathbb{A}) := \mathscr{M}_{n-1} = C^{n-1 | 0} \oplus \cdots \oplus C^{0 | n-1}.
    \end{align*}
    Hence an element $F \in C^{n \geq 1} (\mathbb{A})$ is an $n$-tuple $F = (F_1, \ldots, F_n)$, where $F_r \in C^{n-r\lvert r-1}$ for $r=1,\ldots,n.$ The differential $\delta : C^n (\mathbb{A}) \rightarrow C^{n+1} (\mathbb{A})$ is then defined by
    \begin{align*}
        \delta (F) = (-1)^{n-1} ~ \! [ \mu_{_{\vartriangleright,\vartriangleleft}} + \nu_{_{\rightharpoonup,\leftharpoonup}}, F_1 + \cdots + F_n ], \text{ for } F= (F_1, \ldots, F_n) \in C^n (\mathbb{A}).
    \end{align*}
    Componentwise, we have
    \begin{align*}
        \delta (F)_r = (-1)^{n-1} \big(  [ \mu_{_{\vartriangleright,\vartriangleleft}}, F_r] + [ \nu_{_{\rightharpoonup,\leftharpoonup}}, F_{r-1}]  \big), \text{ for } r=1, \ldots, n+1.
    \end{align*}
    In this case, the corresponding cohomology groups are called the {\bf cohomology groups} of the given matched pair of associative algebras.
\end{remark}

\medskip

\section{Strong deformation maps in quasi-twilled associative algrbras}\label{sec4}

In this section, we introduce strong deformation maps in a quasi-twilled associative algebra generalizing some well-known operators like algebra homomorphisms, derivations, crossed homomorphisms and modified associative {\sf r}-matrices. Then we introduce the cohomology of a strong deformation map unifying the existing cohomologies of all the operators mentioned above.

\medskip

Throughout this section, we let $(\mathbb{A}, \cdot_\mathbb{A})$ be a quasi-twilled associative algebra, where $\mathbb{A}$ has a fixed decomposition $\mathbb{A} = A \oplus B$ with $B \subset \mathbb{A}$ a subalgebra. In terms of components, we assume that this quasi-twilled associative algebra is equivalent to the tuple $(A_\mu, B_\nu, \vartriangleright, \vartriangleleft,  \rightharpoonup, \leftharpoonup,\theta)$ (cf. Theorem \ref{explicit-qtaa}). Therefore, the associative multiplication $\cdot_\mathbb{A}$ is simply given by (\ref{omega-a-map}).

\medskip

 \begin{defn}\label{defn-strong-dm} A \textbf{strong deformation map} in the quasi-twilled associative algebra $(\mathbb{A}, \cdot_\mathbb{A})$ is a linear map $D:A\rightarrow B$ that satisfies
 \begin{align}\label{a15}
         D\big(a\cdot_A b + a\leftharpoonup D(b) + D(a)\rightharpoonup b  \big) = D(a)\cdot_B D(b) + a\vartriangleright D(b)+ D(a)\vartriangleleft b  +\theta(a,b),~~\mathrm{for~ all~} a,b\in A. 
    \end{align}
    \end{defn}
    
    %\textcolor{red}{A pair $\big((A, B, \vartriangleright, \vartriangleleft, \rightharpoonup, \leftharpoonup,\theta), D\big)$ consisting of a quasi-twilled associative algebra and a strong deformation map on it is called a quasi-twilled strong deformation algebra.}

\begin{exam}
    Let $A$ and $B$ be two associative algebras. Consider the quasi-twilled associative algebra $(A\oplus B,\odot)$ in Example \ref{ex1}. Then a linear map $D: A \rightarrow B$ is a strong deformation map in $(A\oplus B,\odot)$ if and only if it is an {\bf algebra homomorphism}, i.e.
    \begin{align*}
   D(a\cdot_A b) = D(a)\cdot_B D(b) ,~~\mathrm{for~all~} a,b \in A.
 \end{align*}
\end{exam}
\begin{exam}
    Let $A$ be an associative algebra and $(B, \vartriangleright, \vartriangleleft)$ be an $A$-bimodule. We consider the quasi-twilled associative algebra $(A\oplus B,\ltimes)$ given in Example \ref{ex2}. In this case, a linear map $D: A \rightarrow B$ is a strong deformation map in $(A\oplus B,\ltimes)$ if and only if it is a {\bf derivation} on $A$ with values in the $A$-bimodule $(B, \vartriangleright, \vartriangleleft)$, i.e. \begin{align*}
     D(a\cdot_A b) =  D(a)\vartriangleleft b 
 + a\vartriangleright D(b),~~\mathrm{for~all~} a,b \in A. 
    \end{align*}
\end{exam}
\begin{exam}
    Let $A$ be an associative algebra and $B$ be an associative algebra object in the category of $A$-bimodules. A linear map $D: A \rightarrow B$ is a strong deformation map in the quasi-twilled associative algebra $(A\oplus B,\odot_\ltimes)$ given in Example \ref{ex3} if and only if $D$ is a {\bf crossed homomorphism}, i.e. 
    \begin{align*}
    D(a\cdot_A b) =  D(a)\cdot_B D(b)+ D(a)\vartriangleleft b + a\vartriangleright D(b)  ,~~\mathrm{for~all~} a,b \in A.     
    \end{align*}
\end{exam}
\begin{exam}
    Let $A$ be an associative algebra and consider the quasi-twilled associative algebra $(A\oplus A, \Box)$ given in Example \ref{ex6}. Then a linear map $D: A \rightarrow A$ is a strong deformation map in $(A\oplus A,\Box)$ if and only if $D$ is a {\bf modified associative {\sf r}-matrix}, i.e.
    \begin{align*}
      D(a)\cdot_A D(b) = D\big(D(a)\cdot_A b + a\cdot_A D(b)\big) - a\cdot_A b, ~~\mathrm{for~all~} a,b \in A. 
    \end{align*}
\end{exam}

   \begin{remark}
     Let $A$ be an associative algebra and $(B,\vartriangleright, \vartriangleleft)$ be an $A$-bimodule such that the second Hochschild cohomology group $H^2_{\mathrm{Hoch}}(A, B)$ is not trivial. Suppose $\theta\in Z^2_{\mathrm{Hoch}}(A,B)$ is a Hochschild $2$-cocycle representing a nonzero element in $H^2_{\mathrm{Hoch}}(A,B)$, i.e. $\theta$ is not an exact $2$-cocycle. We now consider the quasi-twilled associative algebra $(A\oplus B,\ltimes_\theta)$ given in Example \ref{ex7}. Note that a linear map $D: A \rightarrow B$ is a strong deformation map if and only if 
     \begin{align*} 
  D(a\cdot_A b ) = ~D(a)\vartriangleleft b + a\vartriangleright D(b) + \theta(a,b),~~\mathrm{for~all~} a,b \in A.
     \end{align*}
     This holds if and only if $\theta =- (\delta_{\mathrm{Hoch}}D)$ is an exact $2$-cocycle which is not possible. Therefore, a strong deformation map may not exist in arbitrary quasi-twilled associative algebra.
    \end{remark}

\medskip

In the following, we characterize strong deformation maps in terms of their graphs.
\begin{prop}
Let $(\mathbb{A}, \cdot_\mathbb{A})$ be a quasi-twilled associative algebra. Then a linear map $D: A\rightarrow B$ is a strong deformation map if and only if the graph $\mathrm{Gr}(D) = \{ (a, D(a)) \lvert ~ a\in A \}$ is a subalgebra of the associative algebra $(\mathbb{A}, \cdot_\mathbb{A})$.   
\end{prop}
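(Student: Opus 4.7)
The plan is a direct computation using the explicit form of the associative multiplication $\cdot_\mathbb{A}$ on $\mathbb{A} = A \oplus B$ recorded in (\ref{omega-a-map}). First I would unpack the subalgebra condition: $\mathrm{Gr}(D) \subset \mathbb{A}$ is a subalgebra precisely when, for every $a, b \in A$, the product $(a, D(a)) \cdot_\mathbb{A} (b, D(b))$ lies in $\mathrm{Gr}(D)$, i.e.\ its $B$-component equals $D$ applied to its $A$-component.

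Next I would apply formula (\ref{omega-a-map}) to obtain
\begin{align*}
(a, D(a)) \cdot_\mathbb{A} (b, D(b)) = \big( a \cdot_A b + a \leftharpoonup D(b) + D(a) \rightharpoonup b \,,\, D(a) \cdot_B D(b) + a \vartriangleright D(b) + D(a) \vartriangleleft b + \theta(a,b) \big).
\end{align*}
Requiring this element to lie in $\mathrm{Gr}(D)$ amounts to the identity
\begin{align*}
D\big(a \cdot_A b + a \leftharpoonup D(b) + D(a) \rightharpoonup b\big) = D(a) \cdot_B D(b) + a \vartriangleright D(b) + D(a) \vartriangleleft b + \theta(a,b),
\end{align*}
which is exactly the defining condition (\ref{a15}) of a strong deformation map. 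Conversely, if (\ref{a15}) holds for all $a, b \in A$, then the product $(a, D(a)) \cdot_\mathbb{A} (b, D(b))$ is visibly of the form $(c, D(c))$ with $c = a \cdot_A b + a \leftharpoonup D(b) + D(a) \rightharpoonup b$, so $\mathrm{Gr}(D)$ is closed under $\cdot_\mathbb{A}$ and hence a subalgebra.

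There is no real obstacle here; the statement is a direct translation between the defining equation of a strong deformation map and closure of its graph under the product, once the multiplication has been written out componentwise. The only step requiring care is to make sure one matches the $A$-component and the $B$-component correctly when reading off (\ref{omega-a-map}).
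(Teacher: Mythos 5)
Your proposal is correct and follows exactly the same route as the paper: compute $(a,D(a)) \cdot_\mathbb{A} (b,D(b))$ via the componentwise formula (\ref{omega-a-map}) and observe that membership of the product in $\mathrm{Gr}(D)$ is precisely the identity (\ref{a15}). No gaps; this matches the paper's argument.
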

\begin{proof}
    Let $(a,D(a))$ and $(b,D(b))$ be two arbitary elements in $\mathrm{Gr}(D)$. Then we have \begin{align*}
    (a,D(a)) \cdot_\mathbb{A} (b,D(b)) = \big(a\cdot_A b + a\leftharpoonup D(b) + D(a)\rightharpoonup b ,~ D(a)\cdot_B D(b)+a\vartriangleright D(b) + D(a)\vartriangleleft b +\theta(a,b)\big).  
    \end{align*} 
     This is in $\mathrm{Gr}(D)$ if and only if the identity (\ref{a15}) holds.
\end{proof}
As an application of the previous proposition, we get the following.
\begin{prop}
    Let $D: A\rightarrow B$ be a strong deformation map in the quasi-twilled associative algebra $(\mathbb{A}, \cdot_\mathbb{A}).$
    %$(A_\mu, B_\nu, \vartriangleright, \vartriangleleft, \rightharpoonup, \leftharpoonup,\theta)$.
    \begin{itemize}
        \item [(i)] Then the vector space $A$ endowed with the multiplication
        \begin{align}\label{a16}
           \mu_D : A \times A \rightarrow A, ~ (a, b) \mapsto a \cdot_D b :=a\cdot_A b +  a\leftharpoonup D(b) +  D(a)\rightharpoonup b,~~\mathrm{for~} a,b \in A
        \end{align}
        is an associative algebra, denoted by $A_D$.
        \item[(ii)] Moreover, the tuple $(A_{\mu_D}, B_\nu, \vartriangleright_D, \vartriangleleft_D, \rightharpoonup, \leftharpoonup)$ is a matched pair of associative algebras, where for $a\in A$ and $x\in B$, \begin{align}\label{a17}
         a\vartriangleright_D x :=&~a\vartriangleright x + D(a)\cdot_B x - D(a\leftharpoonup x),\\ \label{a18}
         x\vartriangleleft_D a :=&~x\vartriangleleft a + x\cdot_B D(a) - D(x\rightharpoonup a).
        \end{align} 
    \end{itemize}
\end{prop}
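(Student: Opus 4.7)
The plan is to leverage the graph characterization from the previous proposition rather than verify the numerous matched pair axioms by direct computation. Since $D$ is a strong deformation map, $\mathrm{Gr}(D) \subset \mathbb{A}$ is a subalgebra, and the linear bijection $\phi: A \to \mathrm{Gr}(D)$, $a \mapsto (a, D(a))$, identifies $A$ with this subalgebra. Transporting the multiplication along $\phi$ therefore equips $A$ with an associative product. Computing $(a, D(a)) \cdot_\mathbb{A} (b, D(b))$ via the explicit formula (\ref{omega-a-map}), the $A$-component is precisely the expression $a \cdot_A b + a \leftharpoonup D(b) + D(a) \rightharpoonup b$ that defines $a \cdot_D b$; the $B$-component equals $D(a \cdot_D b)$ by the strong deformation map equation (\ref{a15}). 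This proves part (i) and simultaneously shows that $\phi: A_D \to \mathrm{Gr}(D)$ is an isomorphism of associative algebras.

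For part (ii), I would observe that $\mathbb{A} = \mathrm{Gr}(D) \oplus B$ as vector spaces and that both summands are subalgebras of $(\mathbb{A}, \cdot_\mathbb{A})$. By the second characterization in Example \ref{matched-quasi} --- a quasi-twilled associative algebra both of whose direct summands are subalgebras is equivalent to a matched pair --- this decomposition endows $\mathrm{Gr}(D) \oplus B \cong A_D \oplus B$ with a matched pair structure. It then suffices to read off the four bimodule actions by decomposing the mixed products in $\mathbb{A}$ along $\mathrm{Gr}(D) \oplus B$.

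For $a \in A$ and $x \in B$, one computes from (\ref{omega-a-map}) that
\[
(a, D(a)) \cdot_\mathbb{A} (0, x) = \big(a \leftharpoonup x,\; a \vartriangleright x + D(a) \cdot_B x\big).
\]
Writing this uniquely as $(a', D(a')) + (0, x')$ with $(a', D(a')) \in \mathrm{Gr}(D)$ and $(0, x') \in B$ forces $a' = a \leftharpoonup x$ and hence $x' = a \vartriangleright x + D(a) \cdot_B x - D(a \leftharpoonup x) = a \vartriangleright_D x$. Thus the $A_D$-action on $B$ and the $B$-action on $A$ read off from this product are precisely $\vartriangleright_D$ and $\leftharpoonup$. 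An analogous decomposition of $(0, x) \cdot_\mathbb{A} (a, D(a))$ produces $\rightharpoonup$ together with $\vartriangleleft_D$, matching formulas (\ref{a17}) and (\ref{a18}).

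The only genuinely non-routine point is checking that the $B$-component of $(a, D(a)) \cdot_\mathbb{A} (b, D(b))$ really equals $D(a \cdot_D b)$, which is exactly the content of (\ref{a15}); once this is in hand, all remaining matched-pair compatibilities (associativity of $\cdot_D$, the bimodule axioms for $\vartriangleright_D, \vartriangleleft_D$ and for $\rightharpoonup, \leftharpoonup$, and the cross-compatibilities between the two pairs of actions) follow automatically from the associativity of $(\mathbb{A}, \cdot_\mathbb{A})$ together with the fact that both $\mathrm{Gr}(D)$ and $B$ are subalgebras, so none of them needs to be verified by hand.
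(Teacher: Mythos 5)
Your proposal is correct and follows essentially the same route as the paper: both parts rest on the graph characterization, the decomposition $\mathbb{A} = \mathrm{Gr}(D) \oplus B$ into two subalgebras, and the observation from Example \ref{matched-quasi} that such a decomposition yields a matched pair, with the actions $\vartriangleright_D$, $\vartriangleleft_D$ read off by transporting along $\mathrm{Gr}(D) \cong A$. Your write-up simply makes explicit the mixed-product computations that the paper leaves implicit.
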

\begin{proof}
    (i) Since $\mathrm{Gr}(D)$ is linearly isomorphic to the vector space $A$, it follows from the previous proposition that the map $D$ induces an associative algebra structure on $A$. The induced associative multiplication on $A$ is given by (\ref{a16}).
    
    (ii) Let $(a,x)\in \mathbb{A}$ be any arbitary element. Then we can write \begin{align*}
     (a,x) = (a,D(a))+ (0,x-D(a))\in \mathrm{Gr}(D) \oplus B.
    \end{align*}
    Therefore, we have $\mathbb{A} = \mathrm{Gr}(D) \oplus B$. On the other hand, $D$ is a strong deformation map implying that $\mathrm{Gr}(D)$ is a subalgebra of $(\mathbb{A}, \cdot_\mathbb{A})$. This shows that the pair $(\mathrm{Gr}(D), B )$ forms a matched pair of associative algebras. Under the isomorphism $\mathrm{Gr}(D) \cong A$, $(a,D(a)) \longleftrightarrow a$, we get that $(A_{\mu_D}, B_\nu, \vartriangleright_D, \vartriangleleft_D, \rightharpoonup, \leftharpoonup)$ is a matched pair of associative algebras, where $\vartriangleright_D$ and $\vartriangleleft_D$ are respectively given by (\ref{a17}) and (\ref{a18}).
    \end{proof}

    It follows from the previous proposition that $A_D$ is an associative algebra and $(B,\vartriangleright_D,\vartriangleleft_D)$ is an $A_D$-bimodule. We define the Hochschild cohomology groups of $A_D$ with coefficients in the $A_D$-bimodule $(B,\vartriangleright_D,\vartriangleleft_D)$ as the {cohomology groups} of the strong deformation map $D$. Explicitly, we let  \begin{align*}
    C^n(D):= \mathrm{Hom}(A^{\otimes n},B),~~\mathrm{for~}n\geq 0
\end{align*}
and define a map $\delta^D: C^n(D) \rightarrow C^{n+1}(D)$ by 
\begin{align*}
&\delta^D(f)(a_1,\ldots,a_{n+1})\\
&= a_1\vartriangleright_D f(a_2,\ldots,a_{n+1}) + \displaystyle\sum_{i=1}^{n} (-1)^{i}  f(a_1,\ldots,a_{i-1},a_i\cdot_D a_{i+1},\ldots,a_{n+1}) + (-1)^{n+1} f(a_1,\ldots,a_n) \vartriangleleft_D a_{n+1},\\
&= a_1\vartriangleright f(a_2,\ldots,a_{n+1}) + D(a_1)\cdot_B f(a_2,\ldots,a_{n+1})  - D\big(a_1\leftharpoonup f(a_2,\ldots,a_{n+1})\big)\\
& \quad + \displaystyle\sum_{i=1}^{n} (-1)^{i}  f\big(a_1,\ldots,a_{i-1},a_i\cdot_A a_{i+1} + a_i\leftharpoonup D(a_{i+1}) +  D(a_i)\rightharpoonup a_{i+1} , \ldots,a_{n+1}\big)\\
& \quad + (-1)^{n+1} \big\{  f(a_1,\ldots,a_n) \vartriangleleft a_{n+1} + f(a_1,\ldots,a_n) \cdot_B D(a_{n+1}) - D\big( f(a_1,\ldots,a_n)\rightharpoonup a_{n+1} \big) \big\},
\end{align*}
for $f\in C^n(D) = \mathrm{Hom}(A^{\otimes n},B)$ and $a_1,\ldots,a_{n+1}\in A$. Then $(\delta^D)^2 = 0$. The cohomology groups of the cochain complex $\{C^{\bullet}(D),\delta^D \}$ are called the {\bf cohomology groups} of the strong deformation map $D$. The corresponding cohomology groups are denoted by $H^{\bullet}(D)$. 

\medskip

The cohomologies for algebra homomorphisms \cite{gers-sch}, derivations \cite{loday-morph}, crossed homomorphisms \cite{das-crossed} and modified associative $\mathsf{r}$-matrices \cite{das-modified} are well studied in the literature. The cohomologies for all these operators are defined by Hochschild cohomology of certain associative algebras (induced by the respective operators) with coefficients in a suitable induced bimodule. Here we list the induced associative algebras and corresponding bimodules for each of the above operators.

\medskip

\medskip

%\begin{center}
%\begin{tabular}{ c| c| c }
% cell1 & cell2 & cell3 \\ \hline
% cell4 & cell5 & cell6 \\  
% cell7 & cell8 & cell9    
%\end{tabular}
%\end{center}

%\begin{table}[htbp]
%    \centering
   % \resizebox{\textwidth}{!}{
%    \begin{tabular}{|c|c|c|}
%    \hline
%    Operator $D$ & Induced associative structure & Induced bimodule\\
 %    & on the domain of $D$ & on the codomain of $D$ \\
 %   \hline
 %   algebra homomorphism & $ a \cdot_D b = a \cdot_A b$ & $a \vartriangleright_D x = D(a) \cdot_B x,$ \\
 %   $D: A \rightarrow B$ & & $x \vartriangleleft_D a = x \cdot_B D(a),$  \\
 %   \hline derivation
 % & $a \cdot_D b =  a \cdot_A b$ & $a \vartriangleright_D x = a \vartriangleright x,$ \\
 %    $D: A \rightarrow B$ & & $x \vartriangleleft_D a = x \vartriangleleft a,$  \\
%    \hline
%crossed homomorphism & $a \cdot_D b =  a \cdot_A b$ & $a \vartriangleright_D x = a \vartriangleright x + D(a) \cdot_B x,$ \\
%   $D: A \rightarrow B$ & & $x \vartriangleleft_D a = x \vartriangleleft a + x \cdot_B D(a),$  \\
 %   \hline modified associative $\mathsf{r}$-matrix & $a \cdot_D b = D(a) \cdot_A b + a \cdot_A D(b) $ & $a \vartriangleright_D b = D(a) \cdot_A b - D (a \cdot_A b),$ \\
%      $D: A \rightarrow A$  & & $b \vartriangleleft_D a = b \cdot_A D(a) - D (b \cdot_A a).$ \\
%    \hline
%    \end{tabular}
    %\caption{Caption}
   % \label{tab:my_label}
%\end{table}

\begin{center}
    \begin{tabular}{|c|c|c|}
    \hline
    Operator $D$ & Induced associative structure & Induced bimodule\\
     & on the domain of $D$ & on the codomain of $D$ \\
    \hline
    algebra homomorphism & $ a \cdot_D b = a \cdot_A b$ & $a \vartriangleright_D x = D(a) \cdot_B x,$ \\
    $D: A \rightarrow B$ & & $x \vartriangleleft_D a = x \cdot_B D(a),$  \\
    \hline derivation
  & $a \cdot_D b =  a \cdot_A b$ & $a \vartriangleright_D x = a \vartriangleright x,$ \\
     $D: A \rightarrow B$ & & $x \vartriangleleft_D a = x \vartriangleleft a,$  \\
    \hline
crossed homomorphism & $a \cdot_D b =  a \cdot_A b$ & $a \vartriangleright_D x = a \vartriangleright x + D(a) \cdot_B x,$ \\
   $D: A \rightarrow B$ & & $x \vartriangleleft_D a = x \vartriangleleft a + x \cdot_B D(a),$  \\
    \hline modified associative $\mathsf{r}$-matrix & $a \cdot_D b = D(a) \cdot_A b + a \cdot_A D(b) $ & $a \vartriangleright_D b = D(a) \cdot_A b - D (a \cdot_A b),$ \\
      $D: A \rightarrow A$  & & $b \vartriangleleft_D a = b \cdot_A D(a) - D (b \cdot_A a).$ \\
    \hline
    \end{tabular}
    \end{center}

\medskip

\medskip

The above table shows that our cohomology of a strong deformation map in a quasi-twilled associative algebra unifies the cohomologies for all the operators mentioned above.

\medskip

\section{Controlling algebras for strong deformation maps and simultaneous deformations}\label{sec5}

Given a quasi-twilled associative algebra $\mathbb{A}$, we first construct a curved $L_\infty$-algebra whose Maurer-Cartan elements are precisely strong deformation maps in $\mathbb{A}$. This curved $L_\infty$-algebra is the controlling algebra for strong deformation maps in $\mathbb{A}$. Next, for a (fixed) strong deformation map $D$, we obtain the $L_\infty$-algebra that governs the linear deformations of the operator $D$ keeping the underlying quasi-twilled associative algebra $\mathbb{A}$ intact. Finally, we also construct the governing $L_\infty$-algebra for studying the simultaneous deformations of both $\mathbb{A}$ and $D$.

\subsection{Controlling algebras for strong deformation maps} Let $(\mathbb{A}, \cdot_\mathbb{A})$ be a quasi-twilled associative algebra that is equivalent to the tuple $(A_\mu, B_\nu, \vartriangleright, \vartriangleleft, \rightharpoonup, \leftharpoonup, \theta )$. We first consider the graded Lie algebra with the Gerstenhaber bracket
\begin{align*}
    \mathfrak{g}= \big(\oplus_{n=0}^\infty \mathrm{Hom} \big((A\oplus B)^{\otimes n+1}, A\oplus B \big),[~,~]\big)
\end{align*}
on the space of all multilinear maps on the vector space $A \oplus B$. Then it is easy to see that the graded subspace $\mathfrak{a}= \oplus_{n=0}^\infty \mathrm{Hom} (A^{\otimes n+1}, B)$ is an abelian graded Lie subalgebra of $\mathfrak{g}$. Let $P:\mathfrak{g}\rightarrow \mathfrak{a}$ be the projection map onto the subspace $\mathfrak{a}$. Then $\mathrm{ker}(P)_n= C^{n\lvert 0} \oplus C^{n-1\lvert 1} \oplus \cdots \oplus C^{1 \lvert n-1} \oplus C^{0\lvert n} \oplus C^{-1\lvert n+1}$, for any $n\geq 0$. Hence $\mathrm{ker}(P)=\oplus_{n=0}^\infty \mathrm{ker}(P)_n$ is a graded Lie subalgebra of $\mathfrak{g}$ (by Proposition \ref{Pmq}).
Let \begin{align*}
\Omega=\widetilde{\theta}+\mu_{_{\vartriangleright,\vartriangleleft}}+\nu_{_{\rightharpoonup,\leftharpoonup}}
\end{align*}
be the Maurer-Cartan element corresponding to the given quasi-twilled associative algebra. Note that $P(\Omega)=\theta$. Therefore, we obtain a curved $V$-data $(\mathfrak{g},\mathfrak{a}, P,\Omega)$. As a result, we get the following curved $L_{\infty}$-algebra. 
\begin{thm}\label{Tqtalcla}
Let $(A_\mu, B_\nu, \vartriangleright, \vartriangleleft, \rightharpoonup, \leftharpoonup, \theta )$ be a quasi-twilled associative algebra. Then the pair 
\begin{align*}
\big( \mathfrak{a} = \oplus_{n=0}^\infty\mathrm{Hom} (A^{\otimes n+1}, B),\{l_k\}_{k=0}^{\infty}\big)
\end{align*}
is a curved $L_{\infty}$-algebra, where for $f, g \in \mathfrak{a}$, \begin{align*}
l_0=~&\theta,\\
     l_1(f)=~& [\mu_{_{\vartriangleright,\vartriangleleft}},f],\\
  l_2(f,g)=~&[[\nu_{_{\rightharpoonup,\leftharpoonup}},f],g],\\
 l_k=~&0,~~\mathrm{for~}k\geq 3.
 \end{align*}
 Moreover, a linear map $D: A\rightarrow B$ is a strong deformation map in the quasi-twilled associative algebra $(A_\mu, B_\nu, \vartriangleright, \vartriangleleft, \rightharpoonup, \leftharpoonup, \theta )$ if and only if $D\in\mathfrak{a}_0 = \mathrm{Hom}(A, B)$ is a Maurer-Cartan element of the above-defined curved $L_{\infty}$-algebra $(\mathfrak{a},\{l_k\}_{k=0}^{\infty})$.
\end{thm}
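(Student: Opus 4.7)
The plan is to exhibit the claimed curved $L_\infty$-algebra as one produced by Voronov's higher derived bracket construction (Theorem \ref{Tcvd}(i)) applied to a suitable curved $V$-data, and then to verify that the resulting Maurer-Cartan equation unpacks to exactly equation (\ref{a15}).

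First I would assemble the curved $V$-data $(\mathfrak{g}, \mathfrak{a}, P, \Omega)$ foreshadowed in the paragraph preceding the theorem: $\mathfrak{g}$ is the Gerstenhaber graded Lie algebra of multilinear operations on $A \oplus B$, $\mathfrak{a} = \oplus_{n \geq 0}\, \mathrm{Hom}(A^{\otimes n+1}, B) \subset \oplus_n C^{n+1|-1}$, $P$ is the projection, and $\Omega = \widetilde{\theta} + \mu_{_{\vartriangleright,\vartriangleleft}} + \nu_{_{\rightharpoonup,\leftharpoonup}}$. Three axioms need checking: (a) $\mathfrak{a}$ is an abelian graded Lie subalgebra, because Lemma \ref{Lkl} forces the bracket of two elements in $\oplus C^{k|-1}$ into $C^{\bullet|-2}=0$; (b) $\ker(P) = \oplus_n\bigl(C^{n|0} \oplus \cdots \oplus C^{-1|n+1}\bigr)$ is a graded Lie subalgebra, again by Lemma \ref{Lkl}, since brackets preserve the property ``second index $\geq 0$''; (c) $[\Omega,\Omega]=0$, which is precisely the Maurer-Cartan characterization of quasi-twilled associative algebras established in the previous theorem.

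Next I would apply Theorem \ref{Tcvd}(i), yielding $l_0 = P(\Omega)$ and $l_k(f_1,\ldots,f_k) = P[\cdots[[\Omega,f_1],f_2],\ldots,f_k]$, and use bidegree bookkeeping to identify which pieces survive $P$. For inputs $f_i \in C^{n_i+1|-1}$, each bracket lowers the second index by $1$; the three summands $\widetilde{\theta}, \mu_{_{\vartriangleright,\vartriangleleft}}, \nu_{_{\rightharpoonup,\leftharpoonup}}$ of $\Omega$ have second indices $-1, 0, 1$. Matching the output second index to $-1$ (required for landing in $\mathfrak{a}$) forces exactly one summand to contribute at each arity: $\widetilde{\theta}$ for $l_0$, $\mu_{_{\vartriangleright,\vartriangleleft}}$ for $l_1$, and $\nu_{_{\rightharpoonup,\leftharpoonup}}$ for $l_2$. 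For $k \geq 3$ no summand produces an output with second index $-1$, so $l_k \equiv 0$. This gives exactly the formulas in the statement.

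For the Maurer-Cartan correspondence, let $\widetilde{D} \in \mathfrak{g}_0$ denote the extension of $D$ by zero on $B$. The MC equation of the curved $L_\infty$-algebra reduces to
\[
\theta + [\mu_{_{\vartriangleright,\vartriangleleft}}, \widetilde{D}] + \tfrac{1}{2}\bigl[[\nu_{_{\rightharpoonup,\leftharpoonup}}, \widetilde{D}], \widetilde{D}\bigr] = 0,
\]
projected to $\mathfrak{a}$. Evaluating each Gerstenhaber bracket on $(a,0), (b,0)$ and reading the $B$-component produces $a \vartriangleright D(b) + D(a) \vartriangleleft b - D(a \cdot_A b)$ from $l_1$, and $D(a) \cdot_B D(b) - D\bigl(D(a) \rightharpoonup b + a \leftharpoonup D(b)\bigr)$ from $\tfrac{1}{2}l_2$; combined with $\theta(a,b)$ these rearrange to the strong deformation map identity (\ref{a15}). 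The delicate step is the bidegree bookkeeping --- in particular, ensuring both the positive identification of the surviving brackets for $k \leq 2$ and the simultaneous vanishing $l_k \equiv 0$ for $k \geq 3$; the remaining computations are routine expansions of the Gerstenhaber bracket on two-argument inputs.
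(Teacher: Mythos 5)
Your proposal is correct and follows essentially the same route as the paper: assemble the curved $V$-data $(\mathfrak{g},\mathfrak{a},P,\Omega)$, invoke Theorem \ref{Tcvd}(i), use bidegree bookkeeping to identify the surviving brackets and to kill $l_k$ for $k\geq 3$ (the paper phrases this as $[[\Omega,f],g]\in\mathfrak{a}$ together with $\mathfrak{a}$ being abelian, which is the same computation), and then unpack the Maurer-Cartan equation on $(a,0),(b,0)$ to recover identity (\ref{a15}). No gaps.
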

\begin{proof}
    Since $(\mathfrak{g},\mathfrak{a}, P,\Omega)$ is a curved $V$-data, it follows from Theorem \ref{Tcvd}(i) that $(\mathfrak{a},\{l_k\}_{k=0}^{\infty})$ is a curved $L_{\infty}$-algebra, where \begin{align*} l_0=~& P(\Omega) =\theta,\\ 
    l_1(f)=~& P[\Omega,f]= [\mu_{_{\vartriangleright,\vartriangleleft}},f],\\
    .l_2(f,g)=~&P[[\Omega,f],g]=[[\nu_{_{\rightharpoonup,\leftharpoonup}},f],g].    
  \end{align*}
  Moreover, using the bidegree reason, one can easily check that $[ [ \Omega, f], g] \in \mathfrak{a}$, for any $f, g \in \mathfrak{a}$. Hence for any (other) $h \in \mathfrak{a}$, we have 
  \begin{align}\label{three-br}
  [[  [ \Omega, f], g], h] = 0 
  \end{align}
  as $\mathfrak{a}$ is an abelian Lie subalgebra. This shows that
  $      l_k(f_1, f_2, \ldots,f_k) = P[\cdots[[\Omega,f_1],f_2],\ldots,f_k] = 0,$ for any $k \geq 3$ and $f_1,f_2,\ldots,f_k \in \mathfrak{a}$. This proves the first part.
  
  Next, for any linear map $D:A\rightarrow B$, we have 
  \begin{align*}
      &\big(l_0+\displaystyle\sum_{k=1}^{\infty} \dfrac{1}{k!}~l_k(D,\ldots,D)\big) (a,b)\\
      &= \big(\theta+l_1(D)+\frac{1}{2}l_2(D,D)\big)(a,b)\\
      &= \theta(a,b) + [\mu_{_{\vartriangleright,\vartriangleleft}},D](a,b)+ \frac{1}{2}[[\nu_{_{\rightharpoonup,\leftharpoonup}},D],D](a,b)\\
      &= \theta(a,b)+a\vartriangleright D(b)+ D(a)\vartriangleleft b - D(a\cdot_A b) +D(a)\cdot_B D(b)- D\big(a\leftharpoonup D(b)\big) - D\big(D(a)\rightharpoonup b\big),
  \end{align*}
  for all $a,b\in A$. This shows that $l_0+\displaystyle\sum_{k=1}^{\infty} \dfrac{1}{k!}~l_k(D,\ldots,D) =0$ if and only if $D$ is a strong deformation map.
\end{proof}

The curved $L_\infty$-algebra $\big( \mathfrak{a} = \oplus_{n=0}^\infty\mathrm{Hom} (A^{\otimes n+1}, B),\{l_k\}_{k=0}^{\infty}\big)$ is called the {\bf controlling algebra} for strong deformation maps. It is important to remark that this controlling algebra generalizes the controlling algebras for algebra homomorphisms \cite{gers-sch}, derivations \cite{loday-morph} and crossed homomorphism \cite{das-crossed}. As a new application, if we consider the quasi-twilled associative algebra $(A \oplus A, \Box)$ given in Example \ref{ex6}, we obtain the controlling algebra for modified associative $r$-matrices.

\begin{prop}
    Let $A$ be an associative algebra. Then $( \oplus_{n=0}^\infty \mathrm{Hom} (A^{\otimes n+1}, A) , \{ l_k \}_{k=0}^\infty)$ is a curved $L_\infty$-algebra, where $l_0 = \mu$ (the algebra multiplication on $A$), $l_k = 0$ for $k =1, 3, 4, \ldots$ and
    \begin{align*}
        l_2 (f, g)& (a_1, a_2, \ldots, a_{m+n}) = (-1)^m \sum_{i=1}^m (-1)^{(i-1)n} ~ \! f \big(   a_1, \ldots, a_{i-1}, g(a_i, \ldots, a_{i+n-1}) \cdot a_{i+n}, \ldots, a_{m+n}   \big) \\
        &- (-1)^m \sum_{i=1}^m (-1)^{in} f \big(  a_1, \ldots, a_{i-1}, a_i \cdot g (a_{i+1}, \ldots, a_{i+n}), a_{i+n+1}, \ldots, a_{m+n}   \big) \\
        &- (-1)^{ m (n+1)} \bigg\{ \sum_{i=1}^n (-1)^{(i-1)m} ~ \! g \big(   a_1, \ldots, a_{i-1}, f(a_i, \ldots, a_{i+m-1}) \cdot a_{i+m}, \ldots, a_{m+n}   \big) \\
        & \qquad \qquad \qquad \quad  - \sum_{i=1}^n (-1)^{im} g \big(  a_1, \ldots, a_{i-1}, a_i \cdot f (a_{i+1}, \ldots, a_{i+m}), a_{i+m+1}, \ldots, a_{m+n}   \big) \bigg\} \\
        &+ (-1)^{m (n+1)} f(a_1, \ldots, a_m ) \cdot g (a_{m+1}, \ldots, a_{m+n}) - (-1)^m ~ \! g(a_1, \ldots, a_n) \cdot f (a_{n+1}, \ldots, a_{m+n}),
    \end{align*}
    for $f \in \mathrm{Hom}(A^{\otimes m}, A)$ and $g \in \mathrm{Hom}(A^{\otimes n}, A)$. Moreover, a linear map $D: A \rightarrow A$ is a modified associative $r$-matrix on $A$ if and only if $D$ is a Maurer-Cartan element of this curved $L_\infty$-algebra.
\end{prop}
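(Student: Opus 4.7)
The plan is to obtain this proposition as a direct consequence of Theorem \ref{Tqtalcla} applied to the specific quasi-twilled associative algebra $(A \oplus A, \Box)$ from Example \ref{ex6}. First, I would unpack the $\Box$-multiplication in the general form \eqref{omega-a-map} and read off the seven structural data. Comparing components, one finds
\begin{align*}
\mu = 0, \qquad \nu = \mu_A, \qquad \vartriangleright = \vartriangleleft = 0, \qquad \rightharpoonup \, = \, \leftharpoonup \, = \mu_A, \qquad \theta = \mu_A,
\end{align*}
where $\mu_A$ denotes the given associative multiplication on $A$. In particular, $\mu_{_{\vartriangleright,\vartriangleleft}} = 0$ as a bilinear map on $A \oplus A$, while $\nu_{_{\rightharpoonup,\leftharpoonup}}$ encodes exactly the three remaining $\mu_A$-contributions.

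Next, I would invoke Theorem \ref{Tqtalcla} with the above data to produce the curved $L_\infty$-algebra on $\mathfrak{a} = \oplus_{n \geq 0} \mathrm{Hom}(A^{\otimes n+1}, A)$. Because $\mu_{_{\vartriangleright,\vartriangleleft}} = 0$, the bracket $l_1 = [\mu_{_{\vartriangleright,\vartriangleleft}}, -]$ vanishes identically, and the curving reduces to $l_0 = \theta = \mu_A$, which matches the claim. The operations $l_k$ for $k \geq 3$ vanish by the observation \eqref{three-br} in the proof of Theorem \ref{Tqtalcla}.

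The main computational step is to verify that $l_2(f, g) = [[\nu_{_{\rightharpoonup,\leftharpoonup}}, f], g]$ matches the explicit formula in the statement. I would carry this out by expanding the inner bracket $[\nu_{_{\rightharpoonup,\leftharpoonup}}, f]$ using the definition of the Gerstenhaber $\diamond$-product: since $\nu_{_{\rightharpoonup,\leftharpoonup}}$ has two summands of the form $a \leftharpoonup y$ and $x \rightharpoonup b$ in the $A$-slot (together with the $B$-multiplication $x \cdot_B y$), inserting $f$ either in the $A$ or $B$ position produces the three kinds of terms that appear in the displayed formula, namely $f(\cdots) \cdot a_{i+1}$, $a_i \cdot f(\cdots)$, and outer products $f(\cdots) \cdot g(\cdots)$. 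Bracketing again with $g$ doubles this pattern with the corresponding signs governed by the degrees $m$ and $n$. This is the step I expect to be the main bookkeeping obstacle: one must track the Koszul signs from the Gerstenhaber bracket (which depend on $(m-1)$ and $(n-1)$) and combine the two applications of $\diamond$ correctly; however, no conceptual novelty is required, only careful index shifting.

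Finally, for the Maurer-Cartan characterization, I would specialize the formula to $m = n = 1$ and $f = g = D$. A short direct computation (as sketched above) yields
\begin{align*}
l_0(a, b) + \tfrac{1}{2} l_2(D, D)(a, b) = a \cdot b - D(D(a) \cdot b + a \cdot D(b)) + D(a) \cdot D(b),
\end{align*}
and setting this to zero is exactly the modified associative $\mathsf{r}$-matrix equation recalled in the introduction. This completes the proposed proof.
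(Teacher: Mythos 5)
Your proposal is correct and follows exactly the route the paper intends: the proposition is stated there as a direct specialization of Theorem \ref{Tqtalcla} to the quasi-twilled algebra $(A\oplus A,\Box)$ of Example \ref{ex6}, with the same identification $\mu=0$, $\vartriangleright=\vartriangleleft=0$, $\rightharpoonup\,=\,\leftharpoonup\,=\theta=\nu=\mu_A$, so that $l_1=[\mu_{_{\vartriangleright,\vartriangleleft}},-]=0$, $l_0=\theta=\mu_A$, and $l_2=[[\nu_{_{\rightharpoonup,\leftharpoonup}},-],-]$. Your Maurer--Cartan check at $m=n=1$ also reproduces the modified associative $\mathsf{r}$-matrix equation correctly, so nothing is missing.
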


%We will now use the above Maurer-Cartan characterization of a strong deformation map $D$ to obtain a new $L_\infty$-algebra.
\begin{thm}\label{thm-govern-d}
    Let $(A_\mu, B_\nu, \vartriangleright, \vartriangleleft, \rightharpoonup, \leftharpoonup,\theta)$ be a quasi-twilled associative algebra and $D: A\rightarrow B$ be a fixed strong deformation map. Then $\big(\mathfrak{a} = \oplus_{n=0}^\infty\mathrm{Hom} (A^{\otimes n+1}, B),\{l_k^D\}_{k=1}^{\infty}\big)$ is an $L_{\infty}$-algebra, where for $f, g \in \mathfrak{a},$ 
    \begin{align}\label{a19}
        l_1^D (f) =~& l_1(f) + l_2(D,f),\\ \label{a20}
     l_2^D(f,g) =~& l_2(f,g),\\ \label{a21}
     l_k^D =~& 0,~~\mathrm{for~} k\geq 3.
    \end{align}
    Moreover, for any linear map $D':A\rightarrow B$, the sum $D+D'$ is also a strong deformation map if and only if $D' \in \mathfrak{a}_0$ is a Maurer-Cartan element of the $L_{\infty}$-algebra $(\mathfrak{a},\{l_k^D\}_{k=1}^{\infty})$ given above.
\end{thm}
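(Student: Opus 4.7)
The plan is to deduce this theorem as a direct application of the twisting construction in Theorem \ref{Tmcla}, using the fact (already established in Theorem \ref{Tqtalcla}) that a strong deformation map $D$ is precisely a Maurer-Cartan element of the controlling curved $L_\infty$-algebra $(\mathfrak{a},\{l_k\}_{k=0}^\infty)$.

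First I would invoke Theorem \ref{Tmcla} to conclude immediately that $(\mathfrak{a},\{l_k^D\}_{k=1}^\infty)$ is an $L_\infty$-algebra, where
\begin{align*}
l_k^D(f_1,\ldots,f_k) = \sum_{n=0}^\infty \frac{1}{n!}\, l_{n+k}(\underbrace{D,\ldots,D}_{n},f_1,\ldots,f_k).
\end{align*}
Next I would simplify these formulas using the explicit data of the controlling algebra given in Theorem \ref{Tqtalcla}, namely $l_k = 0$ for all $k \geq 3$. For $l_1^D$, only the summands with $n+1 \leq 2$ survive, giving $l_1^D(f) = l_1(f) + l_2(D,f)$, which is \eqref{a19}. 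For $l_2^D$, only the $n=0$ term survives (since $n+2 \geq 3$ for $n\geq 1$), giving $l_2^D(f,g) = l_2(f,g)$, which is \eqref{a20}. For $k \geq 3$, every index $n+k$ is at least $3$, so $l_k^D = 0$, which is \eqref{a21}. Thus the claimed formulas for the twisted brackets are verified.

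For the second statement, I would observe that $D+D'$ is a strong deformation map if and only if $D+D'$ satisfies the Maurer-Cartan equation of $(\mathfrak{a},\{l_k\}_{k=0}^\infty)$ (again by Theorem \ref{Tqtalcla}), i.e.
\begin{align*}
l_0 + \sum_{k=1}^\infty \frac{1}{k!}\, l_k(D+D',\ldots,D+D') = 0.
\end{align*}
Expanding each $l_k(D+D',\ldots,D+D')$ multilinearly and regrouping by the number of $D'$ entries, one obtains
\begin{align*}
\Big( l_0 + \sum_{k=1}^\infty \tfrac{1}{k!}\, l_k(D,\ldots,D) \Big) + \sum_{j=1}^\infty \frac{1}{j!}\, l_j^D(D',\ldots,D'),
\end{align*}
where the reorganisation is precisely the defining identity of the twist. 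The first parenthesis vanishes because $D$ is a Maurer-Cartan element, so the equation reduces to the Maurer-Cartan equation for $D'$ in $(\mathfrak{a},\{l_k^D\}_{k=1}^\infty)$.

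No step here presents a genuine obstacle: the first two parts are a bookkeeping check that higher terms vanish thanks to $l_{\geq 3}=0$ in the untwisted structure (which itself traces back to the three-bracket vanishing \eqref{three-br}), and the last part is the standard \emph{shift-by-Maurer-Cartan} identity for $L_\infty$-algebras. The only mild care needed is to ensure the combinatorial expansion in the final step correctly reproduces the twisted brackets, which follows at once from the definition of $l_k^D$ by separating, in each term $l_{k}(D+D',\ldots,D+D')$, the arguments equal to $D$ from those equal to $D'$.
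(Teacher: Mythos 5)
Your proposal is correct and follows essentially the same route as the paper: both parts rest on Theorem \ref{Tqtalcla} (identifying $D$ as a Maurer--Cartan element of the controlling curved $L_\infty$-algebra), Theorem \ref{Tmcla} (the twist), and the vanishing $l_{k}=0$ for $k\geq 3$, with the second part being the same regrouping of the Maurer--Cartan equation for $D+D'$ that the paper carries out explicitly for the finitely many nonzero brackets.
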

\begin{proof}
    Since $D: A\rightarrow B$ is a strong deformation map, it follows from the previous theorem that $D$ is a Maurer-Cartan element of the curved $L_{\infty}$-algebra $(\mathfrak{a},\{l_k\}_{k=0}^{\infty})$. Hence by applying Theorem \ref{Tmcla} in the present context, we get that $(\mathfrak{a},\{l_k^D\}_{k=1}^{\infty})$ is an $L_{\infty}$-algebra, where the structure maps $\{l_k^D\}_{k=1}^{\infty}$ are given by (\ref{a19})-(\ref{a21}).
    
    For the second part, we observe that \begin{align*}
     &l_0+  l_1(D+D') + \frac{1}{2} l_2(D+D',D+D')\\
     &=\underbrace{l_0+l_1(D) + \frac{1}{2} l_2(D,D)}_{=0} + \big\{l_1(D') +  l_2(D,D')+ \frac{1}{2} l_2(D',D')\big\}\\
     &= l_1^D(D') + \frac{1}{2} l_2^D(D',D').
    \end{align*}
    This shows that $D+D'$ is a Maurer-Cartan element of the curved $L_{\infty}$-algebra $(\mathfrak{a},\{l_k \}_{k=0}^{\infty})$ if and only if $D'$ is a Maurer-Cartan element of the $L_{\infty}$-algebra $(\mathfrak{a},\{l_k^D\}_{k=1}^{\infty})$. Hence the result follows.
\end{proof}

\begin{remark}
Since the $L_\infty$-algebra $\big(\mathfrak{a} = \oplus_{n=0}^\infty\mathrm{Hom} (A^{\otimes n+1}, B),\{l_k^D\}_{k=1}^{\infty}\big)$ constructed in the preceding theorem governs the linear deformations of $D$, we call it the {\bf governing algebra} of the map $D$.
\end{remark}

Let $D: A\rightarrow B$ be a strong deformation map in the quasi-twilled associative algebra. Then we have seen in Theorem \ref{thm-govern-d} that $(\mathfrak{a} = \oplus_{n=0}^\infty\mathrm{Hom} (A^{\otimes n+1}, B),\{l_k^D\}_{k=1}^{\infty})$ is an $L_{\infty}$-algebra. As a consequence, we get that the degree $1$ map $l_1^D: \mathfrak{a}\rightarrow \mathfrak{a}$ is a differential. Moreover, we have the following.
\begin{prop}
     For any $f\in \mathfrak{a}_{n-1} = \mathrm{Hom}(A^{\otimes n},B)$, we have \begin{align*}
         l_1^D(f) = (-1)^{n-1} ~ \! \delta^D(f).
     \end{align*}   
    \end{prop}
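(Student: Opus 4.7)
The plan is to expand both sides of the claimed identity using the definitions and match them term-by-term. Since $l_1^D(f) = l_1(f) + l_2(D,f) = [\mu_{_{\vartriangleright,\vartriangleleft}}, f] + [[\nu_{_{\rightharpoonup,\leftharpoonup}}, D], f]$ and $\delta^D(f)$ is the Hochschild differential for the matched pair $(A_{\mu_D}, B_\nu, \vartriangleright_D, \vartriangleleft_D, \rightharpoonup, \leftharpoonup)$, the key observation is that the three pieces $a \vartriangleright_D x = a \vartriangleright x + D(a)\cdot_B x - D(a\leftharpoonup x)$, $x \vartriangleleft_D a = x\vartriangleleft a + x\cdot_B D(a) - D(x\rightharpoonup a)$, and $a \cdot_D b = a \cdot_A b + a \leftharpoonup D(b) + D(a) \rightharpoonup b$ decompose into ``undeformed'' parts that should come from $[\mu_{_{\vartriangleright,\vartriangleleft}}, f]$ and ``$D$-corrections'' that should come from $[[\nu_{_{\rightharpoonup,\leftharpoonup}}, D], f]$.

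First I would verify the bidegrees: $f \in C^{n|-1}$, $\mu_{_{\vartriangleright,\vartriangleleft}} \in C^{1|0}$, $\nu_{_{\rightharpoonup,\leftharpoonup}} \in C^{0|1}$, and $D \in C^{1|-1}$, so by Lemma \ref{Lkl} both $[\mu_{_{\vartriangleright,\vartriangleleft}}, f]$ and $[[\nu_{_{\rightharpoonup,\leftharpoonup}}, D], f]$ lie in $C^{n+1|-1}$, i.e.\ give maps $A^{\otimes n+1} \to B$. Then I would compute $[\mu_{_{\vartriangleright,\vartriangleleft}}, f] = \mu_{_{\vartriangleright,\vartriangleleft}} \diamond f - (-1)^{n-1} f \diamond \mu_{_{\vartriangleright,\vartriangleleft}}$ on tuples $(a_1, \ldots, a_{n+1}) \in A^{\otimes n+1}$. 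Because $\mu_{_{\vartriangleright,\vartriangleleft}}$ acts on $A \times B$ and $B \times A$ via $\vartriangleright$ and $\vartriangleleft$ respectively, this produces exactly the terms $f(a_1,\ldots,a_n)\vartriangleleft a_{n+1}$, $(-1)^{n-1} a_1 \vartriangleright f(a_2,\ldots,a_{n+1})$, and $-(-1)^{n-1}\sum_{i=1}^n (-1)^{i-1} f(\ldots, a_i \cdot_A a_{i+1}, \ldots)$.

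Next I would compute $[\nu_{_{\rightharpoonup,\leftharpoonup}}, D]$ as an element of $C^{1|0}$, evaluating on pairs in $(A\oplus B)^{\otimes 2}$. The key point is that on $(a_i, a_{i+1}) \in A\times A$ this bracket produces $(D(a_i) \rightharpoonup a_{i+1} + a_i \leftharpoonup D(a_{i+1}), 0) \in A$ (the $B$-component vanishes because $\rightharpoonup, \leftharpoonup$ land in $A$ and $D$ of elements of $A$ gets absorbed), while on mixed inputs involving $B$-slots (which arise when $f$ is substituted) it produces the $D(a)\cdot_B x$ and $D(a \leftharpoonup x)$ type terms from the $\nu$-multiplication and $D$-postcomposition. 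Then $[[\nu_{_{\rightharpoonup,\leftharpoonup}}, D], f] = [\nu_{_{\rightharpoonup,\leftharpoonup}}, D] \diamond f - (-1)^{n-1} f \diamond [\nu_{_{\rightharpoonup,\leftharpoonup}}, D]$ expands: the outer insertions of $f(a_1,\ldots,a_n)$ or $f(a_2,\ldots,a_{n+1})$ yield precisely $f(\ldots)\cdot_B D(a_{n+1}) - D(f(\ldots)\rightharpoonup a_{n+1})$ and $D(a_1)\cdot_B f(\ldots) - D(a_1 \leftharpoonup f(\ldots))$ with the appropriate signs, and the inner insertions of $[\nu_{_{\rightharpoonup,\leftharpoonup}}, D](a_i, a_{i+1})$ into $f$ yield the $D$-correction of the product $a_i \cdot_D a_{i+1}$.

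Finally, summing $l_1(f) + l_2(D,f)$, multiplying by $(-1)^{n-1}$, and grouping terms by position, I would observe that the boundary and interior terms reassemble exactly into $a_1 \vartriangleright_D f(a_2,\ldots,a_{n+1}) + \sum_{i=1}^n (-1)^i f(\ldots, a_i \cdot_D a_{i+1}, \ldots) + (-1)^{n+1} f(a_1,\ldots,a_n)\vartriangleleft_D a_{n+1}$, which is the definition of $\delta^D(f)(a_1,\ldots,a_{n+1})$. The main obstacle will be careful bookkeeping of signs — in particular the $(-1)^{(m-1)(n-1)}$ factor in the Gerstenhaber bracket, the $(-1)^{(i-1)(n-1)}$ in the insertion operator $\diamond$, and correctly identifying which component ($A$ or $B$) each intermediate expression lives in under the various bidegree restrictions.
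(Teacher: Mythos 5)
Your proposal is correct and follows essentially the same route as the paper: both hinge on computing $[\nu_{_{\rightharpoonup,\leftharpoonup}},D]$ explicitly and recognizing that $\mu_{_{\vartriangleright,\vartriangleleft}}+[\nu_{_{\rightharpoonup,\leftharpoonup}},D]$ is exactly the element of $C^{1|0}$ encoding $\mu_D$, $\vartriangleright_D$, $\vartriangleleft_D$, after which the identity reduces to the standard relation between the Gerstenhaber bracket and the Hochschild coboundary. The paper merely packages your term-by-term matching into the single identity $l_1(f)+l_2(D,f)=[\mu_D+\vartriangleright_D+\vartriangleleft_D,f]=(-1)^{n-1}\delta^D(f)$.
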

    \begin{proof}
        For any $(a,x),(b,y)\in \mathbb{A}$, we have \begin{align*}
            [\nu_{_{\rightharpoonup,\leftharpoonup}},D]((a,x),(b,y))=~ \big( a\leftharpoonup D(b) + D(a)\rightharpoonup b, D(a)\cdot_B y+x\cdot_B D(b)-D(a\leftharpoonup y)-D(x\rightharpoonup b)\big).
        \end{align*}
        This shows that $\mu_{_{\vartriangleright,\vartriangleleft}}+[\nu_{_{\rightharpoonup,\leftharpoonup}},D]=~\mu_D+\vartriangleright_D+\vartriangleleft_D$ (using the notations from (\ref{a16}), (\ref{a17}), (\ref{a18})). Hence, we have  \begin{align*}
         l_1^D (f) =~& l_1(f) + l_2(D,f)\\
         =~& [\mu_{_{\vartriangleright,\vartriangleleft}},f]+[[\nu_{_{\rightharpoonup,\leftharpoonup}},D],f]\\
         =~& [\mu_{_{\vartriangleright,\vartriangleleft}}+[\nu_{_{\rightharpoonup,\leftharpoonup}},D],f]\\
         =~&[\mu_D+\vartriangleright_D+\vartriangleleft_D,f]\\
        =~& (-1)^{n-1} ~ \! \delta^D(f). 
        \end{align*}
        Hence the proof.
    \end{proof}
    \subsection{Simultaneous deformations of a quasi-twilled associative algebra and a strong deformation map} 
Here we first construct an $L_\infty$-algebra whose Maurer-Cartan elements correspond to pairs $(\mathbb{A}, D)$, where $\mathbb{A} = A \oplus B$ has a quasi-twilled associative algebra structure and $D: A \rightarrow B$ is a strong deformation map in $\mathbb{A}$. Then we study the governing algebra and deformations of such a pair $(\mathbb{A}, D)$.
    
    Let $A$ and $B$ be two vector spaces (not necessarily equipped with additional structures). Let $\mathfrak{g}= \big(\oplus_{n=0}^\infty \mathrm{Hom} \big((A\oplus B)^{\otimes n+1}, A\oplus B \big),[~,~]\big)$ be the Gerstenhaber graded Lie algebra on the space of all multilinear maps on $A\oplus B$. Then we have seen earlier that $\mathfrak{a}= \oplus_{n=0}^\infty \mathrm{Hom} (A^{\otimes n+1}, B)$ is an abelian graded Lie subalgebra of $\mathfrak{g}$. Moreover, if $P:\mathfrak{g}\rightarrow \mathfrak{a}$ is the projection map onto the subspace $\mathfrak{a}$ then $\mathrm{ker}(P)\subset \mathfrak{g}$ is a graded Lie subalgebra. Hence the quadruple $(\mathfrak{g},\mathfrak{a}, P,\Delta=0)$ is a $V$-data. Thus by applying Theorem \ref{Tcvd} (ii) to the graded Lie subalgebra $\mathscr{Q} \subset \mathfrak{g}$, we get the following result.
    \begin{thm}
    Let $A$ and $B$ be two vector spaces. Then $(\mathscr{Q}[1]\oplus \mathfrak{a},\{l_k\}_{k=1}^{\infty})$ is an $L_{\infty}$-algebra, where \begin{align*} 
\widetilde{l}_1( F[1],f) =~& P(F),\\ 
\widetilde{l}_2(F[1], G[1]) =~& (-1)^{\lvert F \rvert} [F,G][1] ,\\
\widetilde{l}_k\big(F[1],f_1,\ldots,f_{k-1}\big) =~& P[\cdots [[F,f_1],f_2],\ldots,f_{k-1} ],~~k\geq 2,
\end{align*}
for all homogeneous elements $F, G \in \mathscr{Q}$ and $f,f_1,\ldots,f_{k-1}\in \mathfrak{a}$.
    \end{thm}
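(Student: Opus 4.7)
The plan is to recognize this as a direct application of Theorem \ref{Tcvd}(ii) to the $V$-data $(\mathfrak{g}, \mathfrak{a}, P, \Delta = 0)$, where $\mathfrak{g}$ is the Gerstenhaber graded Lie algebra on all multilinear maps on $A \oplus B$, $\mathfrak{a} = \oplus_{n=0}^{\infty} \mathrm{Hom}(A^{\otimes n+1}, B)$, $P : \mathfrak{g} \to \mathfrak{a}$ is the projection, and $\Delta = 0$. For the graded Lie subalgebra $\mathfrak{h}$ appearing in Theorem \ref{Tcvd}(ii), I would take $\mathfrak{h} = \mathscr{Q}$.

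The verifications required are all bidegree-based and are essentially already in place in the paper. First, $\mathfrak{a}$ is an abelian graded Lie subalgebra: identifying $\mathrm{Hom}(A^{\otimes n+1}, B)$ with $C^{-1 \lvert n+1}$, Lemma \ref{Lkl} shows that the Gerstenhaber bracket of two such elements lies in a component $C^{-2 \lvert \ast}$, which is zero since all bidegrees $k \lvert l$ have $k, l \geq -1$. Second, $\ker(P) \subset \mathfrak{g}$ is a graded Lie subalgebra since it is precisely the sum of the summands $C^{k \lvert l}$ with $k \geq 0$, which is closed under the bracket by Lemma \ref{Lkl}. Third, $\mathscr{Q} \subset \mathfrak{g}$ is a graded Lie subalgebra by Proposition \ref{Pmq}. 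Finally, $[\Delta, \Delta] = 0$ and $[\Delta, \mathscr{Q}] = 0 \subset \mathscr{Q}$ are immediate because $\Delta = 0$. Thus the hypotheses of Theorem \ref{Tcvd}(ii) are fulfilled.

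With these in place, Theorem \ref{Tcvd}(ii) yields a curved $L_\infty$-structure on $\mathscr{Q}[1] \oplus \mathfrak{a}$. Since $\Delta = 0$, the curvature $\widetilde{l}_0 = P(\Delta)$ vanishes, so the structure is in fact an honest $L_\infty$-algebra. Substituting $\Delta = 0$ into the general formulas of Theorem \ref{Tcvd}(ii), the term $[\Delta, x][1]$ in $\widetilde{l}_1(x[1])$ drops out, leaving $\widetilde{l}_1(F[1]) = P(F)$; the formula $\widetilde{l}_2(F[1], G[1]) = (-1)^{\lvert F \rvert}[F, G][1]$ is unaffected; the higher brackets $\widetilde{l}_k(F[1], f_1, \ldots, f_{k-1})$ for $k \geq 2$ match verbatim; and the brackets $\widetilde{l}_k(f_1, \ldots, f_k)$ purely on $\mathfrak{a}$ all vanish, since each such iterated Gerstenhaber bracket begins with $[\Delta, f_1] = 0$. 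There is no real obstacle here; the main bookkeeping is to confirm the collapse of the Tcvd(ii) formulas under $\Delta = 0$ and to reconcile the statement's compressed notation $\widetilde{l}_1(F[1], f) = P(F)$ with its decomposition $\widetilde{l}_1(F[1]) + \widetilde{l}_1(f) = P(F) + 0$.
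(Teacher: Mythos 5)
Your proposal is correct and follows exactly the paper's route: the theorem is obtained by applying Theorem \ref{Tcvd}(ii) to the $V$-data $(\mathfrak{g},\mathfrak{a},P,\Delta=0)$ with $\mathfrak{h}=\mathscr{Q}$, and the supporting facts you supply (abelianness of $\mathfrak{a}$, closure of $\ker(P)$ and of $\mathscr{Q}$ under the bracket via Lemma \ref{Lkl} and Proposition \ref{Pmq}, and the collapse of the structure maps and vanishing of the curvature because $\Delta=0$) are precisely the ones the paper leaves implicit. The only slip is a harmless bidegree swap: in the paper's conventions $\mathrm{Hom}(A^{\otimes n+1},B)=C^{n+1\lvert -1}$ (it is $\mathrm{Hom}(B^{\otimes n+1},A)$ that equals $C^{-1\lvert n+1}$), so the bracket of two elements of $\mathfrak{a}$ lands in a component $C^{\ast\lvert -2}=0$ and $\ker(P)$ is the span of the $C^{k\lvert l}$ with $l\geq 0$ --- the reasoning goes through verbatim with the two indices interchanged.
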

    Next, suppose there are bilinear maps  
    \begin{align*} 
    &\mu: A \times A\rightarrow A, \qquad  \nu: B \times B\rightarrow B,\\
    \vartriangleright ~: A \times B\rightarrow B, \quad \vartriangleleft ~ : B \times A & \rightarrow B, \quad  \rightharpoonup ~: B \times A\rightarrow A, \quad \leftharpoonup ~ : A \times B\rightarrow A , \quad \theta: A \times A\rightarrow B
    \end{align*}
    and a linear map $D:A \rightarrow B$. We now consider the element 
    \begin{align*}
      \Omega :=\widetilde{\theta}+\mu_{_{\vartriangleright,\vartriangleleft}}+\nu_{_{\rightharpoonup,\leftharpoonup}}\in C^{2\lvert -1} \oplus C^{1\lvert 0} \oplus C^{0\lvert 1} = \mathscr{Q}_1.
    \end{align*}
     The same element $\Omega\in \mathscr{Q}_1$ can be realized as an element $\Omega[1]\in (\mathscr{Q}[1])_0$ by degree shift.

    \begin{thm}\label{mc-simulaneous}
        With the above notations, $(A_\mu, B_\nu, \vartriangleright, \vartriangleleft, \rightharpoonup, \leftharpoonup,\theta )$ is a quasi-twilled associative algebra and $D: A \rightarrow B$ is a strong deformation map in it if and only if the element $\alpha =(\Omega[1], D)\in (\mathscr{Q}[1]\oplus \mathfrak{a})_0$ is a Maurer-Cartan element of the $L_{\infty}$-algebra $(\mathscr{Q}[1]\oplus \mathfrak{a},\{ \widetilde{l}_k\}_{k=1}^{\infty})$.
    \end{thm}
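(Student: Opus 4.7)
The plan is to expand the Maurer--Cartan equation $\sum_{k\geq 1}\tfrac{1}{k!}\,\widetilde{l}_k(\alpha,\dots,\alpha)=0$ for $\alpha=\Omega[1]+D\in(\mathscr{Q}[1]\oplus\mathfrak{a})_0$ and to observe that it splits cleanly into a $\mathscr{Q}[1]$-component, which recovers the Maurer--Cartan equation for $\Omega$ in $\mathscr{Q}$, and an $\mathfrak{a}$-component, which recovers the Maurer--Cartan equation for $D$ in the controlling algebra of Theorem~\ref{Tqtalcla}. The claim will then follow from the prior Maurer--Cartan characterizations of quasi-twilled associative algebras and of strong deformation maps.

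First I would expand each term by multilinearity, keeping only those summands permitted by Theorem~\ref{Tcvd}(ii) with $\Delta=0$: the map $\widetilde{l}_k$ vanishes on inputs lying entirely in $\mathfrak{a}$, accepts exactly two $\mathscr{Q}[1]$-slots for $\widetilde{l}_2$, and accepts exactly one for $\widetilde{l}_k$ with $k\geq 3$. The surviving contributions are
\[
\widetilde{l}_1(\alpha)=P(\Omega),\qquad \widetilde{l}_2(\alpha,\alpha)=-[\Omega,\Omega][1]+2\,P[\Omega,D],\qquad \widetilde{l}_3(\alpha,\alpha,\alpha)=3\,P[[\Omega,D],D],
\]
where the sign in $\widetilde{l}_2$ comes from $|\Omega|=1$, and the factor $3$ in $\widetilde{l}_3$ counts the three positions of the single $\Omega[1]$-input (all equal, since $\Omega[1]$ and $D$ both have degree $0$).

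The crucial technical step is to verify that $\widetilde{l}_k(\alpha,\dots,\alpha)=0$ for every $k\geq 4$; this is the only nontrivial bookkeeping in the proof. I would establish it by tracking bidegrees with Lemma~\ref{Lkl}. Since $\Omega\in C^{2|-1}\oplus C^{1|0}\oplus C^{0|1}$ and $D\in C^{1|-1}$, the nonzero components of $[\Omega,D]$ lie in $C^{2|-1}\oplus C^{1|0}$ (the a priori $C^{3|-2}$ piece vanishing identically by the constraint $k,l\geq -1$); bracketing with a further $D\in C^{1|-1}$ then yields an element of $C^{2|-1}\subseteq\mathfrak{a}$. Because $\mathfrak{a}$ is an abelian graded Lie subalgebra of $\mathfrak{g}$, every subsequent bracket with $D$ is zero, so $\widetilde{l}_k(\alpha,\dots,\alpha)=0$ for $k\geq 4$.

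Substituting into the Maurer--Cartan equation yields
\[
-\tfrac{1}{2}[\Omega,\Omega][1] \;+\; \bigl(P(\Omega)+P[\Omega,D]+\tfrac{1}{2}P[[\Omega,D],D]\bigr)\;=\;0,
\]
whose $\mathscr{Q}[1]$-component is $[\Omega,\Omega]=0$ and whose $\mathfrak{a}$-component is $l_0+l_1(D)+\tfrac{1}{2}l_2(D,D)=0$ under the identifications $l_0=P(\Omega)=\widetilde{\theta}$, $l_1(D)=P[\Omega,D]=[\mu_{\vartriangleright,\vartriangleleft},D]$ and $l_2(D,D)=P[[\Omega,D],D]=[[\nu_{\rightharpoonup,\leftharpoonup},D],D]$ of Theorem~\ref{Tqtalcla}. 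By the Maurer--Cartan characterization of Section~\ref{sec3}, the first component vanishes if and only if $(A_\mu,B_\nu,\vartriangleright,\vartriangleleft,\rightharpoonup,\leftharpoonup,\theta)$ is a quasi-twilled associative algebra; by Theorem~\ref{Tqtalcla}, the second vanishes if and only if $D\colon A\to B$ is a strong deformation map in this structure. Combining the two equivalences gives the desired ``if and only if''.
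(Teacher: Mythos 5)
Your proposal is correct and follows essentially the same route as the paper: expand the Maurer--Cartan equation for $(\Omega[1],D)$, kill the terms $\widetilde{l}_k$ for $k\geq 4$ via the vanishing of $[[[\Omega,D],D],D]$, and read off the two components as $[\Omega,\Omega]=0$ and $\theta+l_1(D)+\tfrac{1}{2}l_2(D,D)=0$, invoking the earlier Maurer--Cartan characterizations. The only difference is cosmetic: you spell out the bidegree bookkeeping (via Lemma \ref{Lkl}) showing $[[\Omega,D],D]\in C^{2|-1}\subset\mathfrak{a}$, where the paper simply cites ``an argument similar to (\ref{three-br})''.
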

    \begin{proof}
First, by using an argument similar to (\ref{three-br}), one can show that
 \begin{align*}
            [[[\Omega,D],D],D] =~0.
        \end{align*}
        As a result, we get that $\widetilde{l}_k\big((\Omega[1],D), \ldots,(\Omega[1],D) \big)=~0,$ for all $k\geq 4$. Hence 
        \begin{align*}
            &\displaystyle\sum_{k=1}^{\infty} \dfrac{1}{k!}~\widetilde{l}_k\big((\Omega[1],D),\ldots,(\Omega[1],D)\big)\\
            &= \widetilde{l}_1(\Omega[1],D) + \frac{1}{2} ~ \! \widetilde{l}_2((\Omega[1],D),(\Omega[1],D))+ \frac{1}{3!} ~ \! \widetilde{l}_3((\Omega[1],D),(\Omega[1],D),(\Omega[1],D)) \\
            &= (0,P(\Omega))+\frac{1}{2}\big\{\widetilde{l}_1(\Omega[1],\Omega[1])+ 2 ~ \! \widetilde{l}_2(\Omega[1],D)\big\}+ \frac{1}{3!}\big\{3 ~ \! \widetilde{l}_3(\Omega[1],D,D)\big\}\\
            &= \big(-\frac{1}{2} [\Omega,\Omega][1], P(\Omega)+P[\Omega,D]+ \frac{1}{2} [[\Omega,D],D]\big)\\
            &= \big(-\frac{1}{2} [\Omega,\Omega][1],~ \! \theta+l_1(D)+\frac{1}{2}l_2(D,D)\big).
        \end{align*}
        We have already seen that $[\Omega,\Omega]=0$ if and only if $(A_\mu, B_\nu, \vartriangleright, \vartriangleleft, \rightharpoonup, \leftharpoonup, \theta )$ is a quasi-twilled associative algebra. Moreover, $\theta+l_1(D)+\frac{1}{2} l_2(D, D)=0$ if and only if $D: A \rightarrow B$ is a strong deformation map in the above quasi-twilled associative algebra (cf. Theorem \ref{Tqtalcla}). This proves the result.
    \end{proof}

    Let $(\mathbb{A}, D)$ be a pair of a quasi-twilled associative algebra $\mathbb{A} = ( \mathbb{A}, \cdot_\mathbb{A})$ with a fixed decomposition $\mathbb{A} = A \oplus B$ and a strong deformation map $D$ in it. Then we have seen in Theorem \ref{mc-simulaneous} that $\alpha = ( \Omega [1], D)$ is a Maurer-Cartan element of the $L_\infty$-algebra $(\mathscr{Q}[1]\oplus \mathfrak{a},\{ \widetilde{l}_k\}_{k=1}^{\infty})$, where $\Omega \in \mathscr{Q}_1$ represents the given quasi-twilled associative algebra structure on $\mathbb{A}$. Hence, we obtain the following by applying Theorem \ref{Tmcla}.

    \begin{thm}
          Let $(\mathbb{A}, D)$ be a pair of a quasi-twilled associative algebra $\mathbb{A}$ with a fixed decomposition $\mathbb{A} = A \oplus B$ and a strong deformation map $D$ in it. Then the pair $  \big(  \mathscr{Q}[1]\oplus \mathfrak{a},\{ \widetilde{l}_k^{   ( \Omega [1], D)  } \}_{k=1}^{\infty}  \big)$ is an $L_\infty$-algebra, where
          \begin{align*}
              \widetilde{l}_k^{   ( \Omega [1], D)  }  \big(  (F_1 [1], f_1), \ldots, (F_k [1], f_k) \big) := \sum_{n=0}^\infty \frac{1}{n!} ~ \! \widetilde{l}_{n+k} \big(  \underbrace{  ( \Omega [1], D), \ldots,    ( \Omega [1], D)}_{n \mathrm{~ times}},  (F_1 [1], f_1), \ldots, (F_k [1], f_k)  \big),  
          \end{align*}
          for $k \geq 1$ and homogeneous elements $ (F_1 [1], f_1), \ldots, (F_k [1], f_k) \in  \mathscr{Q}[1]\oplus \mathfrak{a}$. Moreover, for any other element $\Omega' \in \mathscr{Q}_1$ and a linear map $D': A \rightarrow B$, the sum $\Omega + \Omega' \in \mathscr{Q}_1$ represents a new quasi-twilled associative algebra structure on $\mathbb{A}$ and $D + D': A \rightarrow B$ is a strong deformation map in this quasi-twilled associative structure if and only if $(\Omega' [1], D') \in (\mathscr{Q}[1]\oplus \mathfrak{a})_0$ is a Maurer-Cartan element of  $  \big(  \mathscr{Q}[1]\oplus \mathfrak{a},\{ \widetilde{l}_k^{   ( \Omega [1], D)  } \}_{k=1}^{\infty}  \big)$.
    \end{thm}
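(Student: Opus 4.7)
The plan is to deduce both parts of the theorem as direct consequences of the twisting construction in Theorem \ref{Tmcla} combined with the Maurer-Cartan characterization established in Theorem \ref{mc-simulaneous}. By Theorem \ref{mc-simulaneous}, the element $(\Omega[1], D)$ is a Maurer-Cartan element of the $L_\infty$-algebra $(\mathscr{Q}[1] \oplus \mathfrak{a}, \{\widetilde{l}_k\}_{k=1}^\infty)$ constructed just above it. Applying Theorem \ref{Tmcla} to this Maurer-Cartan element immediately yields an $L_\infty$-algebra whose $k$-ary brackets are, by the very definition of the twisting procedure, precisely the maps $\widetilde{l}_k^{(\Omega[1], D)}$ written out in the statement. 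This settles the first assertion with no additional work.

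For the ``moreover'' clause, the approach is to relate the Maurer-Cartan equation for the sum $(\Omega[1], D) + (\Omega'[1], D') = ((\Omega + \Omega')[1], D + D')$ to the Maurer-Cartan equation for $(\Omega'[1], D')$ in the twisted $L_\infty$-algebra. First I would apply Theorem \ref{mc-simulaneous} to the pair $(\Omega + \Omega', D + D')$: this gives that $\Omega + \Omega'$ represents a quasi-twilled associative algebra structure on $\mathbb{A}$ with $D + D'$ as a strong deformation map in it if and only if $((\Omega + \Omega')[1], D + D')$ is a Maurer-Cartan element of the untwisted $L_\infty$-algebra $(\mathscr{Q}[1] \oplus \mathfrak{a}, \{\widetilde{l}_k\}_{k=1}^\infty)$. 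The remaining step is the standard algebraic identity that, in any weakly filtered $L_\infty$-algebra, if $\alpha$ is a Maurer-Cartan element, then $\alpha + \beta$ is itself Maurer-Cartan if and only if $\beta$ is a Maurer-Cartan element of the $L_\infty$-algebra twisted by $\alpha$. This follows by expanding $\sum_{k \geq 1} \frac{1}{k!}\, \widetilde{l}_k(\alpha + \beta, \ldots, \alpha + \beta)$ via graded symmetry and the multinomial expansion, cancelling the pure-$\alpha$ term via the Maurer-Cartan equation for $\alpha$, and reindexing so that the surviving sum factors as $\sum_{j \geq 1} \frac{1}{j!}\, \widetilde{l}_j^{\alpha}(\beta, \ldots, \beta)$.

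The only technical point that requires any care is the convergence of the formal sums defining each $\widetilde{l}_k^{(\Omega[1], D)}$ and each appearance of the twisted Maurer-Cartan equation, which is guaranteed by the weakly filtered assumption adopted globally in Section \ref{sec2}. There is no genuine conceptual obstacle here: the theorem is a formal consequence of the twisting machinery together with Theorem \ref{mc-simulaneous}, and the real content of the statement lies in the prior verification of that Maurer-Cartan characterization for simultaneous deformations.
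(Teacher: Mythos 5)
Your proposal is correct and follows essentially the same route as the paper: the first assertion is obtained by applying the twisting construction of Theorem \ref{Tmcla} to the Maurer--Cartan element $(\Omega[1],D)$ supplied by Theorem \ref{mc-simulaneous}, and the ``moreover'' clause follows from the standard expansion showing that $\alpha+\beta$ is Maurer--Cartan precisely when $\beta$ is Maurer--Cartan in the algebra twisted by $\alpha$ (the same computation the paper carries out explicitly in the analogous Theorems \ref{thm-govern-d} and \ref{Tca}).
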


    The above theorem says that the $L_\infty$-algebra  $  \big(  \mathscr{Q}[1]\oplus \mathfrak{a},\{ \widetilde{l}_k^{   ( \Omega [1], D)  } \}_{k=1}^{\infty}  \big)$ governs the simultaneous deformations of the given quasi-twilled associative algebra structure on $\mathbb{A}$ and the strong deformation map $D$. For this reason, we call this $L_\infty$-algebra as the {\bf governing algebra} for the pair $(\mathbb{A}, D)$.

\section{Weak deformation maps in quasi-twilled associative algrbras}\label{sec6}
%The notion of deformation maps in a matched pair of associative algebras (or a twilled associative algebra) was introduced by Agore \cite{agore} in the study of classifying compliments. 
This section introduces weak deformation maps in a quasi-twilled associative algebra. We observe that such maps unify various operators like (relative) Rota-Baxter operators of weight $0$, (relative) Rota-Baxter operators of weight $1$, twisted Rota-Baxter operators, Reynolds operators, left averaging operators and right averaging operators. Then we define the cohomology of a weak deformation map generalizing the existing cohomologies of all the operators mentioned above.

Let $(\mathbb{A}, \cdot_\mathbb{A})$ be a quasi-twilled associative algebra. We assume that this quasi-twilled associative algebra is equivalent to the tuple $(A_\mu, B_\nu, \vartriangleright, \vartriangleleft, \rightharpoonup, \leftharpoonup,\theta)$.

\begin{defn}
 A \textbf{weak deformation map} in the quasi-twilled associative algebra $(\mathbb{A}, \cdot_\mathbb{A})$ is a linear map $r:B\rightarrow A$ that satisfies 
 \begin{align}\label{a4}
        r(x)\cdot_A r(y) + r(x)\leftharpoonup y + x\rightharpoonup r(y) =~ r\big(x\cdot_B y + r(x)\vartriangleright y + x\vartriangleleft r(y) +\theta(r(x),r(y))\big),~~\mathrm{for~ all~} x,y\in B.
    \end{align}
\end{defn}

Let $r : B \rightarrow A$ be a weak deformation map. If $r$ is invertible then $r^{-1} : A \rightarrow B$ is a strong deformation map in the sense of Definition \ref{defn-strong-dm}. Conversely, the inverse (when invertible) of a strong deformation map is a weak deformation map.
    
%\textcolor{red}{      A pair $(\mathbb{A},r)$ consisting of a quasi-twilled associative algebra $\mathbb{A}$ and a deformation map $r$ in $\mathbb{A}$ is called a \textbf{quasi-twilled deformation algebra}.}

\begin{exam}
 Let $A$ and $B$ be two associative algebras. Consider the quasi-twilled associative algebra $(A\oplus B,\odot)$ given in Example \ref{ex1}. A linear map $r : B \rightarrow A$ is a weak deformation map in $(A \oplus B, \odot)$ if and only if $r$ is an {\bf algebra homomorphism}, i.e.
 \begin{align*}
   r(x\cdot_B y) = r(x) \cdot_A r(y),~~\mathrm{for~all~} x,y \in B.
 \end{align*}
\end{exam}

\begin{exam}
 Let $A$ be an associative algebra and $(B, \vartriangleright, \vartriangleleft)$ be an $A$-bimodule. Consider the quasi-twilled associative algebra $(A\oplus B,\ltimes)$ given in Example \ref{ex2}. A linear map $r: B\rightarrow A$ is a weak deformation map in $(A\oplus B,\ltimes)$ if and only if $r$ is a {\bf relative Rota-Baxter operator of weight $0$}, i.e.
 \begin{align*}
     r(x)\cdot_A r(y) = r\big(r(x) \vartriangleright y + x\vartriangleleft r(y)\big),~~\mathrm{for~all~} x,y \in B.
     \end{align*}
\end{exam}

\begin{exam}
Let $A$ be an associative algebra and $B$ be an associative algebra object in the category of $A$-bimodules. Consider the quasi-twilled associative algebra $(A\oplus B,\odot_\ltimes)$ given in Example \ref{ex3}. In this case, a linear map $r : B \rightarrow A$ is a weak deformation map in $(A\oplus B,\odot_\ltimes)$ if and only if $r$ is a {\bf relative Rota-Baxter operator of weight $1$}, i.e.
\begin{align*} 
r(x)\cdot_A r(y) = r\big(x\cdot_B y+r(x) \vartriangleright y + x\vartriangleleft r(y)\big),~~\mathrm{for~all~} x,y \in B.
     \end{align*}
\end{exam}

\begin{exam}
 Let $A$ be an associative algebra and $(B, \vartriangleright)$ be a left $A$-module. Here we consider the quasi-twilled associative algebra $(A\oplus B,\cdot_\vartriangleright)$ given in Example \ref{ex4}. Then a linear map $r: B\rightarrow A$ is a weak deformation map in $(A\oplus B,\cdot_\vartriangleright)$
if and only if $r$ is a {\bf relative left averaging operator}, i.e.
\begin{align*}
     r(x)\cdot_A r(y) = r\big(r(x) \vartriangleright y \big),~~\mathrm{for~all~} x,y \in B.
     \end{align*} 
\end{exam}

\begin{exam}
  Let $A$ be an associative algebra and $(B, \vartriangleleft)$ be a right $A$-module. Next, we consider the quasi-twilled associative algebra $(A\oplus B,\cdot_\vartriangleleft)$ given in Example \ref{ex5}. A linear map $r: B\rightarrow A$ is a weak deformation map in $(A\oplus B,\cdot_\vartriangleleft)$ if and only if $r$ is a {\bf relative right averaging operator}, i.e.
 \begin{align*}
     r(x)\cdot_A r(y) = r\big(x \vartriangleleft r(y) \big),~~\mathrm{for~all~} x,y \in B.
     \end{align*}  
\end{exam}

\begin{exam}
 Let $A$ be an associative algebra, $(B,\vartriangleright, \vartriangleleft)$ be an $A$-bimodule and $\theta\in Z^2_{\mathrm{Hoch}}(A,B)$ be a Hochschild 2-cocycle.  A linear map $r : B \rightarrow A$ is a weak deformation map in the quasi-twilled associative algebra $(A\oplus B,\ltimes_\theta)$ given in Example \ref{ex7} if and only if $r$ is a {\bf $\theta$-twisted Rota-Baxter operator}, i.e.
 \begin{align*} 
r(x)\cdot_A r(y) = r\big(r(x) \vartriangleright y + x\vartriangleleft r(y)+\theta(r(x),r(y) ) \big),~~\mathrm{for~all~} x,y \in B.
     \end{align*}
\end{exam}

\begin{exam}
 Let $A$ be an associative algebra. Consider the quasi-twilled  associative algebra $(A\oplus A,\ltimes_{-\mu})$ given in Example \ref{ex8}. A linear map $r: A\rightarrow A$ is a weak deformation map in $(A\oplus A,\ltimes_{-\mu})$ if and only if $r$ is a {\bf Reynolds operator}, i.e.
 \begin{align*} 
r(a)\cdot_A r(b) = r\big(r(a) \cdot_A b + a\cdot_A r(b)-r(a)\cdot_A r(b)\big),~~\mathrm{for~all~} a,b \in A.
     \end{align*}
\end{exam}

\begin{exam}
Let $(A, B, \vartriangleright, \vartriangleleft, \rightharpoonup, \leftharpoonup)$ be a matched pair of associative algebras and consider the quasi-twilled associative algebra $(A \oplus B, \Join)$ given in Example \ref{matched-quasi}. A linear map $r: B \rightarrow A$ is a weak deformation map in $(A \oplus B, \Join)$ if and only if $r$ satisfies
\begin{align}\label{agore-defor}
     r(x)\cdot_A r(y) + r(x)\leftharpoonup y + x\rightharpoonup r(y) =~ r\big(x\cdot_B y + r(x)\vartriangleright y + x\vartriangleleft r(y) \big),~~\mathrm{for~ all~} x,y\in B.
\end{align}
In other words, $r$ is a deformation map \cite{agore} in the given matched pair of associative algebras.
\end{exam}

%\textcolor{red}{Last section}\\
%By replacing the Hochschild 2-cocycle with any non-abelian 2-cocycle, one obtains the following.
%\begin{exam}
% Let $A$ and $B$ be two associative algebras and $(\vartriangleright, \vartriangleleft,\theta)$ be a non-abelian 2-cocycle on $A$ with values in $B$. Consider the quasi-twilled associative algebra $(A\oplus B,\boxtimes)$ described in Example \ref{ex9}. Thus, a deformation map in $(A\oplus B,\boxtimes)$ is a linear map $r: B\rightarrow A$ satisfying \begin{align*} 
%r(x)\cdot_A r(y) = r\big( x\cdot_B y+ r(x) \vartriangleright y + x\vartriangleleft r(y)+\theta(r(x),r(y)\big),~~\mathrm{for~all~} x,y \in B.
 %    \end{align*}
  %    We call such a map by $(\vartriangleright, \vartriangleleft,\theta)$-twisted Rota-Baxter operator.     
%\end{exam}

The next result gives a characterization of weak deformation maps in a quasi-twilled associative algebra.

\begin{prop}
    Let $(\mathbb{A}, \cdot_\mathbb{A})$ be a quasi-twilled associative algebra. A linear map $r: B\rightarrow A$ is a weak deformation map if and only if its graph $\mathrm{Gr}(r) = \{ (r(x),x)\lvert ~ x\in B \}$ is a subalgebra of the associative algebra $(\mathbb{A}, \cdot_\mathbb{A})$.
\end{prop}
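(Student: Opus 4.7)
The plan is to mimic the analogous characterization already proved for strong deformation maps. The key observation is that $\mathrm{Gr}(r) \subset \mathbb{A}$ is a linear subspace (isomorphic to $B$ via the obvious projection), so the only question is whether it is closed under the multiplication $\cdot_\mathbb{A}$. To answer this, I will take two generic elements $(r(x), x)$ and $(r(y), y)$ in $\mathrm{Gr}(r)$ and expand their product using the explicit formula \eqref{omega-a-map}.

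Concretely, applying \eqref{omega-a-map} yields
\begin{align*}
(r(x), x) \cdot_\mathbb{A} (r(y), y) = \big( r(x)\cdot_A r(y) + r(x)\leftharpoonup y + x \rightharpoonup r(y),\ x\cdot_B y + r(x)\vartriangleright y + x\vartriangleleft r(y) + \theta(r(x), r(y)) \big).
\end{align*}
An element of $\mathbb{A} = A \oplus B$ lies in $\mathrm{Gr}(r)$ precisely when its first coordinate equals $r$ applied to its second coordinate. Thus $\mathrm{Gr}(r)$ is closed under $\cdot_\mathbb{A}$ if and only if, for all $x, y \in B$,
\begin{align*}
r(x)\cdot_A r(y) + r(x)\leftharpoonup y + x \rightharpoonup r(y) = r\big( x\cdot_B y + r(x)\vartriangleright y + x\vartriangleleft r(y) + \theta(r(x), r(y)) \big),
\end{align*}
which is precisely the defining identity \eqref{a4} of a weak deformation map.

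This argument is essentially a one-line computation once \eqref{omega-a-map} is invoked, so there is no genuine obstacle; the only thing to be careful about is writing the product in the correct component order (the $A$-component first and the $B$-component second), matching the direct-sum convention used throughout the section. No associativity of $\cdot_\mathbb{A}$ is needed here, only the bilinear form of the multiplication, so the proof is symmetric to the one given for strong deformation maps and can be presented in two lines.
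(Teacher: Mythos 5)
Your proposal is correct and follows essentially the same route as the paper: expand the product of two graph elements via the explicit formula for $\cdot_\mathbb{A}$ and observe that membership in $\mathrm{Gr}(r)$ amounts exactly to the defining identity of a weak deformation map. No discrepancies to report.
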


\begin{proof}
    Let $(r(x),x)$ and $(r(y),y)$ be two arbitary elements in $\mathrm{Gr}(r)$. Then it follows from (\ref{omega-a-map}) that 
    \begin{align*}
      (r(x),x) \cdot_\mathbb{A} (r(y),y) = \big(r(x)\cdot_A r(y) + r(x)\leftharpoonup y + x\rightharpoonup r(y),~x\cdot_B y+r(x) \vartriangleright y + x\vartriangleleft r(y)+\theta\big(r(x),r(y)\big)\big). 
    \end{align*} 
  This is in $\mathrm{Gr}(r)$ if and only if the identity (\ref{a4}) holds. This proves the desired result.   
\end{proof}

As a consequence of the above proposition, we get the following.
\begin{prop}\label{r-def}
Let $(\mathbb{A}, \cdot_\mathbb{A} )$ be a quasi-twilled associative algebra and $r: B\rightarrow A$ be a weak deformation map in it. Then the vector space $B$ is equipped with the new multiplication
\begin{align*}
 \nu_r : B \times B \rightarrow B, ~(x, y) \mapsto   x\cdot_r y := x\cdot_B y+r(x) \vartriangleright y + x\vartriangleleft r(y)+\theta(r(x),r(y)),~~\mathrm{for~} x,y\in B
\end{align*}
is an associative algebra, denoted by $B_r$.
\end{prop}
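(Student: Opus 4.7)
The plan is to transport the associative multiplication on $\mathrm{Gr}(r)$ back to $B$ along the natural linear isomorphism $B \cong \mathrm{Gr}(r), ~ x \mapsto (r(x), x)$. By the preceding proposition, the hypothesis that $r$ is a weak deformation map is exactly the statement that $\mathrm{Gr}(r) \subset \mathbb{A}$ is a subalgebra of $(\mathbb{A}, \cdot_\mathbb{A})$. Hence $\mathrm{Gr}(r)$ inherits an associative algebra structure from $\mathbb{A}$, and one only needs to check that the map $x \mapsto (r(x), x)$ intertwines $\cdot_r$ with $\cdot_\mathbb{A}|_{\mathrm{Gr}(r)}$.

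Concretely, I would compute $(r(x), x) \cdot_\mathbb{A} (r(y), y)$ using the explicit form (\ref{omega-a-map}) of the multiplication $\cdot_\mathbb{A}$, obtaining
\begin{align*}
(r(x), x) \cdot_\mathbb{A} (r(y), y) = \big(\, r(x) \cdot_A r(y) + r(x) \leftharpoonup y + x \rightharpoonup r(y) \, , \, x \cdot_B y + r(x) \vartriangleright y + x \vartriangleleft r(y) + \theta(r(x), r(y)) \,\big).
\end{align*}
The second component is by definition $x \cdot_r y$, while the defining identity (\ref{a4}) of a weak deformation map says precisely that the first component equals $r(x \cdot_r y)$. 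Therefore $(r(x), x) \cdot_\mathbb{A} (r(y), y) = (r(x \cdot_r y), x \cdot_r y)$, which both confirms $\mathrm{Gr}(r)$ is closed under $\cdot_\mathbb{A}$ (as was already known) and identifies the transported multiplication on $B$ as $\cdot_r$.

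Once this identification is in place, associativity of $\cdot_r$ follows immediately from associativity of $\cdot_\mathbb{A}$ on the subalgebra $\mathrm{Gr}(r)$: for $x, y, z \in B$, applying the linear isomorphism to both sides of $(x \cdot_r y) \cdot_r z$ and $x \cdot_r (y \cdot_r z)$ yields equal elements of $\mathrm{Gr}(r)$, and injectivity of $x \mapsto (r(x), x)$ transports this equality back to $B$. No direct verification using the ten compatibility identities (\ref{b1})--(\ref{b11}) is needed; the graph characterization absorbs them all. There is essentially no obstacle here — the only thing to be careful about is matching the formula (\ref{omega-a-map}) term-by-term with the definition of $\cdot_r$ and with the weak deformation identity (\ref{a4}).
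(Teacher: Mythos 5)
Your proposal is correct and is exactly the argument the paper intends: Proposition \ref{r-def} is stated there as an immediate consequence of the graph characterization, transporting the multiplication of the subalgebra $\mathrm{Gr}(r)\subset\mathbb{A}$ to $B$ along $x\mapsto(r(x),x)$, just as you do. Your write-up merely makes explicit the term-by-term identification that the paper leaves implicit.
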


In the next, we prove a more general result. More precisely, we show that a weak deformation map in a quasi-twilled associative algebra $(\mathbb{A}, \cdot_\mathbb{A})$ deforms the structure into a new quasi-twilled associative algebra. In terms of components, we have the following.

\begin{thm}
Let $(A_\mu, B_\nu, \vartriangleright, \vartriangleleft, \rightharpoonup, \leftharpoonup, \theta )$ be a quasi-twilled associative algebra and $r: B\rightarrow A$ be a weak deformation map in it. Then $(A_{\mu_r}, B_{\nu_r}, \vartriangleright_r, \vartriangleleft_r, \rightharpoonup_r, \leftharpoonup_r, \theta)$ is a quasi-twilled associative algebra, where for any $a,b\in A$ and $x,y\in B$,
\begin{align} \label{a5}
   \mu_r (a, b) = a\cdot_r b:=~& a\cdot_A b - r(\theta(a,b)),\\\label{a6}
   \nu_r (x, y) = x\cdot_r y:=~& x\cdot_B y +r(x) \vartriangleright y + x\vartriangleleft r(y)+\theta(r(x),r(y)),\\\label{a7}
  a\vartriangleright_r x:=~& a\vartriangleright x + \theta (a,r(x)),\\ \label{a8}
  x\vartriangleleft_r a:=~& x\vartriangleleft a + \theta(r(x),a),\\ \label{a9}
  x\rightharpoonup_ra:=~& x\rightharpoonup a + r(x)\cdot_A a- r(x \vartriangleleft a) - r\big(\theta(r(x),a)\big),\\ \label{a10}
 a\leftharpoonup_r x:=~& a\leftharpoonup x + a\cdot_A r(x) - r(a \vartriangleright x) - r\big(\theta(a,r(x))\big). 
\end{align}
\end{thm}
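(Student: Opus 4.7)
The plan is to interpret the result as a change of decomposition of the same associative algebra $(\mathbb{A}, \cdot_\mathbb{A})$ rather than attempting to verify all eleven compatibility conditions (\ref{b1})--(\ref{b11}) directly. Since $r : B \to A$ is a weak deformation map, the preceding proposition tells us that $\mathrm{Gr}(r) \subset \mathbb{A}$ is a subalgebra. The first step is to observe that we have a new vector space decomposition
\begin{align*}
    \mathbb{A} = A \oplus \mathrm{Gr}(r), \qquad (a,x) = \bigl(a - r(x), 0\bigr) + \bigl(r(x), x\bigr).
\end{align*}
Together with the fact that $\mathrm{Gr}(r)$ is a subalgebra, this already shows (by Definition \ref{dqtal}) that $(\mathbb{A}, \cdot_\mathbb{A})$ with the decomposition $A \oplus \mathrm{Gr}(r)$ is a quasi-twilled associative algebra.

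Next, I would transport this new quasi-twilled structure back to $A \oplus B$ via the linear isomorphism $\phi : \mathrm{Gr}(r) \xrightarrow{\cong} B$, $(r(x), x) \mapsto x$, extended by the identity on the $A$-factor. Using the explicit formula (\ref{omega-a-map}) for $\cdot_\mathbb{A}$, I would then compute the four products
\begin{align*}
    (a,0) \cdot_\mathbb{A} (b,0), \quad \bigl(r(x),x\bigr) \cdot_\mathbb{A} \bigl(r(y),y\bigr), \quad (a,0) \cdot_\mathbb{A} \bigl(r(x),x\bigr), \quad \bigl(r(x),x\bigr) \cdot_\mathbb{A} (a,0)
\end{align*}
and in each case split the result along $A \oplus \mathrm{Gr}(r)$ using the identity $(a,x) = (a - r(x), 0) + (r(x), x)$. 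Reading off the $A$-component and the $\mathrm{Gr}(r)$-component (identified with $B$ via $\phi$), the first product produces $\mu_r$ in $A$ and the original $\theta$ in $B$; the second, by the weak deformation map axiom (\ref{a4}), lies entirely in $\mathrm{Gr}(r)$ and its $B$-component is exactly $\nu_r$; the third yields $a \leftharpoonup_r x$ in $A$ and $a \vartriangleright_r x$ in $B$; and the fourth yields $x \rightharpoonup_r a$ in $A$ and $x \vartriangleleft_r a$ in $B$. These are precisely the formulas (\ref{a5})--(\ref{a10}).

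Putting these two steps together, the tuple $(A_{\mu_r}, B_{\nu_r}, \vartriangleright_r, \vartriangleleft_r, \rightharpoonup_r, \leftharpoonup_r, \theta)$ is the explicit description (in the sense of Theorem \ref{explicit-qtaa}) of the quasi-twilled associative algebra $(\mathbb{A}, \cdot_\mathbb{A}) = A \oplus \mathrm{Gr}(r)$ transported through $\phi$; in particular, all compatibility conditions (\ref{b1})--(\ref{b11}) hold automatically. I expect the only mildly delicate point to be bookkeeping the decomposition when the product lands outside of either summand (i.e.\ in the mixed products $(a,0) \cdot_\mathbb{A} (r(x),x)$ and $(r(x),x) \cdot_\mathbb{A} (a,0)$), where one has to subtract the correct image of $r$ to isolate the $A$-part; this is exactly what produces the correction terms $-r(a \vartriangleright x) - r(\theta(a,r(x)))$ and $-r(x \vartriangleleft a) - r(\theta(r(x),a))$ in $\leftharpoonup_r$ and $\rightharpoonup_r$ respectively. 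No direct verification of the bimodule axioms or of identities (\ref{b1})--(\ref{b11}) for the new operations is needed, since associativity of $\cdot_\mathbb{A}$ is unchanged under relabeling the decomposition.
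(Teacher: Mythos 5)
Your proposal is correct, but it takes a genuinely different route from the paper. The paper works entirely inside the Maurer--Cartan formalism: it forms the twisted element $\Omega_r = \widetilde{\theta} + (\mu_{_{\vartriangleright,\vartriangleleft}})_r + (\nu_{_{\rightharpoonup,\leftharpoonup}})_r + \psi_r$ by iterated Gerstenhaber brackets of $\Omega$ with $r$, cites Uchino's result that $[\Omega_r,\Omega_r]=0$ for \emph{any} linear map $r$, observes that the component $\psi_r \in C^{-1|2}$ vanishes exactly when $r$ is a weak deformation map (so that $\Omega_r$ lands back in $\mathscr{Q}_1$), and reads off the components (\ref{a5})--(\ref{a10}) from explicit bracket computations. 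You instead use a change-of-complement argument: since $\mathrm{Gr}(r)$ is a subalgebra and $\mathbb{A} = A \oplus \mathrm{Gr}(r)$ as vector spaces, the same algebra $(\mathbb{A}, \cdot_\mathbb{A})$ with this new decomposition is a quasi-twilled associative algebra by Definition \ref{dqtal}, and transporting through $\mathrm{Gr}(r) \cong B$ produces the claimed tuple; your four sample products and their splittings along $A \oplus \mathrm{Gr}(r)$ are all computed correctly, including the correction terms $-r(\cdots)$ in $\rightharpoonup_r$ and $\leftharpoonup_r$. Your route is more elementary and self-contained (it needs no external citation and no bracket calculus), and it makes transparent \emph{why} the compatibility conditions hold for free. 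What the paper's route buys is consistency with the machinery used throughout Sections 5--7: the element $\Omega_r$ and the vanishing of $\psi_r$ reappear directly in the construction of the controlling $L_\infty$-algebra for weak deformation maps, and the statement that $[\Omega_r,\Omega_r]=0$ for arbitrary $r$ is strictly more general than what the change-of-complement argument gives.
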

\begin{proof}
    Let $\Omega = \widetilde{\theta} + \mu_{_{ \vartriangleright, \vartriangleleft}} + \nu_{_{\rightharpoonup, \leftharpoonup}} \in C^{2 | -1} \oplus C^{1 | 0} \oplus C^{0| 1} = \mathscr{Q}_1$ be the Maurer-Cartan element corresponding to the given quasi-twilled associative algebra. Then for any linear map $r: B \rightarrow A$, one can define a new element
    \begin{align*}
        \Omega_r := \widetilde{\theta} + (\mu_{_{ \vartriangleright, \vartriangleleft}})_r + (\nu_{_{\rightharpoonup, \leftharpoonup}})_r + \psi_r ~ \in C^{2 | -1} \oplus C^{1 | 0} \oplus C^{0| 1}  \oplus C^{-1 | 2} = \mathfrak{g}_1,
    \end{align*}
    where
    \begin{align*}
        (\mu_{_{ \vartriangleright, \vartriangleleft}})_r :=~& \mu_{_{ \vartriangleright, \vartriangleleft}} + [ \widetilde{\theta}, r], \qquad (\nu_{_{\rightharpoonup, \leftharpoonup}})_r := \nu_{_{\rightharpoonup, \leftharpoonup}} + [ \mu_{_{ \vartriangleright, \vartriangleleft}}, r] + \frac{1}{2} [[ \widetilde{\theta}, r ], r] \\
        &\text{ and } ~~~\psi_r := [\nu_{_{\rightharpoonup, \leftharpoonup}}, r] + \frac{1}{2!} [[ \mu_{_{ \vartriangleright, \vartriangleleft}}, r ], r] + \frac{1}{3!} [[[ \widetilde{\theta}, r], r], r].
    \end{align*}
By straightforward calculations, we see that
    \begin{align}
        ( \mu_{_{ \vartriangleright, \vartriangleleft}} )_r ((a, x), (b, y)) =~& (a \cdot_r b, ~ \!  a \vartriangleright_r y + x \vartriangleleft_r b), \label{r-mu}\\
        (\nu_{_{\rightharpoonup, \leftharpoonup}})_r  ((a, x), (b, y)) =~& (a \leftharpoonup_r y + x \rightharpoonup_r b, ~ \! x \cdot_r y), \label{r-nu}\\
        \psi_r  ((a, x), (b, y)) =~& \big( \substack{ -r(x\cdot_B y)+ r(x)\leftharpoonup y + x\rightharpoonup r(y)  \\ ~~  + r(x)\cdot_A r(y)- r ( r(x)\vartriangleright y + x\vartriangleleft r(y) ) - r(\theta (r(x),r(y) ))}
        ~ \! , 0 \big), \label{r-psi}
    \end{align}
    for $(a, x), (b, y) \in A \oplus B$. It has been shown by Uchino \cite{uchino-t} that $[\Omega_r, \Omega_r ] = 0$, i.e. the element $\Omega_r$ defined a new associative algebra structure on the direct sum $A \oplus B$.  However, if $r$ is a weak deformation map then $\psi_r  = 0$. As a result, $\Omega_r =   \widetilde{\theta} + (\mu_{_{ \vartriangleright, \vartriangleleft}})_r + (\nu_{_{\rightharpoonup, \leftharpoonup}})_r \in \mathscr{Q}_1$ corresponds to a quasi-twilled associative algebra structure. Finally, it follows from (\ref{r-mu}) and (\ref{r-nu}) that this structure is precisely $(A_{\mu_r}, B_{\nu_r}, \vartriangleright_r, \vartriangleleft_r, \rightharpoonup_r, \leftharpoonup_r, \theta)$ in terms of components.
\end{proof}

%In the following, we define the cohomology of a weak deformation map in a quasi-twilled associative algebra. Our cohomology is made in such a way that it unifies the existing cohomologies of relative Rota-Baxter operators, twisted Rota-Baxter operators, Reynolds operators, left-averaging operators and right-averaging operators.

%Let $(A, B, \vartriangleright, \vartriangleleft, \rightharpoonup, \leftharpoonup, \theta )$ be a quasi-twilled associative algebra and $r: B\rightarrow A$ be a weak deformation map. Since $(A_{\mu_r}, B_{\nu_r}, \vartriangleright_r, \vartriangleleft_r, \rightharpoonup_r, \leftharpoonup_r, \theta )$ is a quasi-twilled associative algebra, it follows that $B_{\nu_r}$ is an associative algebra and $(A,\rightharpoonup_r, \leftharpoonup_r)$ is a bimodule over it. 

Let $(\mathbb{A}, \cdot_\mathbb{A})$ be a quasi-twilled associative algebra and $r : B \rightarrow A$ be a weak deformation map in $(\mathbb{A}, \cdot_\mathbb{A})$. We have already seen in Proposition \ref{r-def} that $B_r$ is an associative algebra. In particular, the above theorem shows that the triple $(A, \rightharpoonup_r, \leftharpoonup_r)$ is a $B_r$-bimodule. We define the
Hochschild cohomology groups of $B_r$ with coefficients in the $B_r$-bimodule $(A, \rightharpoonup_r, \leftharpoonup_r)$ as the cohomology groups of the weak deformation map $r$.
More precisely, we set
\begin{align*}
    C^n(r):= \mathrm{Hom}(B^{\otimes n},A),~~\mathrm{for~}n\geq 0 
    \end{align*}
and a map $\delta^r: C^n(r) \rightarrow C^{n+1}(r)$ by 
\begin{align}
&\delta^r(f)(x_1,\ldots,x_{n+1}) \label{hoch-r-weak}\\ 
&= x_1 \rightharpoonup_r f(x_2,\ldots,x_{n+1}) + \displaystyle\sum_{i=1}^{n} (-1)^{i} ~ f(x_1,\ldots,x_{i-1},x_i\cdot_r x_{i+1},\ldots,x_{n+1})  \nonumber \\
& \qquad \qquad \qquad  \qquad  + (-1)^{n+1} f(x_1,\ldots,x_n) \leftharpoonup_r x_{n+1}, \nonumber \\
&= x_1\rightharpoonup f(x_2,\ldots,x_{n+1}) + r(x_1)\cdot_A f(x_2,\ldots,x_{n+1})  \nonumber \\ & \qquad \qquad \qquad  \qquad  - r\big(x_1\vartriangleleft f(x_2,\ldots,x_{n+1})\big) - r\big(\theta\big(r(x_1), f(x_2,\ldots,x_{n+1})\big)\big) \nonumber  \\ 
& \quad + \displaystyle \sum_{i=1}^{n} (-1)^{i} ~ f\big(x_1,\ldots,x_{i-1},x_i\cdot_B x_{i+1}+  r(x_i)\vartriangleright x_{i+1}+ x_i\vartriangleleft r(x_{i+1})+\theta\big(r(x_i),r(x_{i+1})\big),\ldots,x_{n+1}\big)  \nonumber \\ 
& \quad + (-1)^{n+1} \big\{  f(x_1,\ldots,x_n) \leftharpoonup x_{n+1} + f(x_1,\ldots,x_n) \cdot_A r(x_{n+1})  \nonumber \\ 
& \qquad \qquad \qquad  \qquad  - r(f(x_1,\ldots,x_n)\vartriangleright x_{n+1})- r\big(\theta\big(f(x_1,\ldots,x_n),r(x_{n+1})\big)\big)\big\}, \nonumber 
\end{align}
for $f\in C^n(r) = \mathrm{Hom} (B^{\otimes n}, A)$ and $x_1,\ldots,x_{n+1}\in B$. Then $(\delta^r)^2 = 0$. The cohomology groups of the cochain complex $\{ C^\bullet (r), \delta^r \}$ are called the {\bf cohomology groups} of the weak deformation map $r$. We denote the corresponding cohomology groups by $H^\bullet(r)$. 

Recently, the cohomologies for relative Rota-Baxter operators (of weight $0$) \cite{das-rota}, relative Rota-Baxter operators of weight $1$ \cite{das-weighted}, twisted Rota-Baxter operators \cite{das-ns}, Reynolds operators \cite{das-ns}, left averaging operators \cite{wang-zhou} and right averaging operators \cite{wang-zhou} are extensively studied. The cohomologies for all these operators are defined by Hochschild cohomology of some associative algebras (induced by the respective operators) with coefficients in a suitable induced bimodule. Here we list the induced associative algebras and corresponding bimodules for each of the above operators.

\medskip

\medskip

\begin{center}
    \begin{tabular}{|c|c|c|}
    \hline
    Operator $r$ & Induced associative structure & Induced bimodule\\
     & on the domain of $r$ &  on the codomain of $r$\\
    \hline
    relative Rota-Baxter&  & $x \rightharpoonup_r a = r(x) \cdot_A a - r (x \vartriangleleft a),$ \\
     operator of weight $0$  & $x\cdot_r y=~r(x) \vartriangleright y+ x\vartriangleleft r(y)$ & $a \leftharpoonup_r x= a \cdot_A r (x) - r (a \vartriangleright x),$\\
     $r: B \rightarrow A$ & & \\
    \hline relative Rota-Baxter 
  &  & $x \rightharpoonup_r a = r(x) \cdot_A a - r (x \vartriangleleft a), $\\
  operator of weight $1$ & $x\cdot_r y=~ x\cdot_B y + r(x) \vartriangleright y + x\vartriangleleft r(y)$ & $a \leftharpoonup_r x= a \cdot_A r (x) - r (a \vartriangleright x),$\\
  $r: B \rightarrow A$ & & \\
    \hline
$\theta$-twisted  &  & $x \rightharpoonup_r a = r(x) \cdot_A a - r (x \vartriangleleft a)$\\
Rota-Baxter operator &  $x\cdot_r y=~r(x) \vartriangleright y+ x\vartriangleleft r(y) + \theta(r(x),r(y))$ & \qquad \qquad   $ - ~ r \big(  \theta ( r (x) , a)\big),$ \\
$r: B \rightarrow A$ & & $a \leftharpoonup_r x= a \cdot_A r (x) - r (a \vartriangleright x)$  \\
 & & \qquad \qquad $ - ~r \big( \theta (a, r (x)) \big),$\\
    \hline &    & $a \rightharpoonup_r b = r(a) \cdot_A b - r (a \cdot_A b)$\\
     Reynolds operator & $a\cdot_r b=~r(a) \cdot_A b+ a\cdot_A r(b) - r(a) \cdot_A r(b) $ & \qquad \qquad $ +~ r (r(a) \cdot_A b),$\\
    $r : A \rightarrow A$ & & $b \leftharpoonup_r a= b \cdot_A r (a) - r (b \cdot_A a)$ \\ 
     & & \qquad \qquad $+ r (b \cdot_A r (a)),$\\
    \hline
    relative left averaging&  & $x \rightharpoonup_r a = r (x) \cdot_A a,$ \\
     operator  & $x\cdot_r y=~r(x) \vartriangleright y$ & $a \leftharpoonup_r x= a \cdot_A r (x) - r (a \vartriangleright x),$ \\
    $r: B \rightarrow A$ & & \\
    \hline
    relative right averaging &  & $x \rightharpoonup_r a = r(x) \cdot_A a - r (x \vartriangleleft a),$ \\
     operator  & $x\cdot_r y=~x\vartriangleleft r(y) $ & $a \leftharpoonup_r x= a \cdot_A r (x)$ \\
    $r: B \rightarrow A$ & & \\
    \hline
    \end{tabular}
\end{center}

\medskip

\medskip

It follows from the table that our cohomology of a weak deformation map in a quasi-twilled associative algebra unifies the cohomologies for all the operators mentioned above.

\medskip

\section{Controlling algebras for weak deformation maps and simultaneous deformations}\label{sec7}

In this section, we first construct an $L_\infty$-algebra whose Maurer-Cartan elements correspond to weak deformation maps in a given quasi-twilled associative algebra. For a (fixed) weak deformation map $r$, we also construct an $L_\infty$-algebra that governs the linear deformations of the operator $r$. In the end, we define the governing $L_\infty$-algebra of a pair $(\mathbb{A}, r)$ consisting of a given quasi-twilled associative algebra $\mathbb{A}$ and a weak deformation map $r$, and discuss simultaneous deformations of $\mathbb{A}$ and $r$.

\subsection{Controlling algebras for weak deformation maps}
Let $(\mathbb{A}, \cdot_\mathbb{A})$ be a quasi-twilled associative algebra. Suppose it is equivalent to the tuple $(A_\mu, B_\nu, \vartriangleright, \vartriangleleft, \rightharpoonup, \leftharpoonup, \theta )$. As before, we consider the graded Lie algebra 
\begin{align*}
    \mathfrak{g}= \big(\oplus_{n=0}^\infty \big(\mathrm{Hom} (A\oplus B)^{\otimes n+1}, A\oplus B\big),[~,~]\big).
\end{align*}
We define a graded subspace $\mathfrak{b} = \oplus_{n=0}^\infty \mathrm{Hom} (B^{\otimes n+1}, A)\subset \mathfrak{g}$. Then it is easy to see that $\mathfrak{b}$ is an abelian graded Lie subalgebra of $\mathfrak{g}$. Let $P:\mathfrak{g}\rightarrow \mathfrak{b}$ be the projection map onto the subspace $\mathfrak{b}$. Note that \begin{align*}
    \mathrm{ker }(P)_n= C^{n+1\lvert -1} \oplus C^{n\lvert 0} \oplus C^{n -1 \lvert 1} \oplus \cdots \oplus C^{1\lvert n-1} \oplus  C^{0\lvert n}  \quad (\text{in terms of the notation } (\ref{a3})).
\end{align*} 
Thus, it follows from Proposition \ref{Pmq} that $\mathrm{ker}(P) = \oplus_{n=0}^\infty \mathrm{ker} (P)_n \subset \mathfrak{g}$ is a graded Lie subalgebra. Let
\begin{align*}
\Omega=~\widetilde{\theta} + \mu_{_{\vartriangleright,\vartriangleleft}}+\nu_{_{\rightharpoonup,\leftharpoonup}}\in \mathfrak{g}_1
\end{align*}
be the Maurer-Cartan element corresponding to the given quasi-twilled associative algebra. Since $P (\Omega) = 0$, we get that the quadruple $(\mathfrak{g},\mathfrak{b}, P,\Omega)$ is a $V$-data. This leads to the following result. 

\begin{thm}\label{Tqtallar}
 Let $(A_\mu, B_\nu, \vartriangleright, \vartriangleleft, \rightharpoonup, \leftharpoonup, \theta )$ be a quasi-twilled associative algebra. Then the pair 
 \begin{align*}
 \big( \mathfrak{b} = \oplus_{n=0}^\infty\mathrm{Hom} (B^{\otimes n+1}, A),\{l_k\}_{k=1}^{\infty}\big)
 \end{align*}
 is an $L_{\infty}$-algebra, where for $f, g, h \in \mathfrak{b}$,
 \begin{align}
    l_1(f)=~& [\nu_{_{\rightharpoonup,\leftharpoonup}},f], \label{r-l1}\\
  l_2(f,g)=~&[[\mu_{_{\vartriangleright,\vartriangleleft}},f],g], \label{r-l2}\\
 l_3(f,g,h)=~& [[[\widetilde{\theta},f],g],h], \label{r-l3}\\
 l_k=~&0,~~\mathrm{for~}k\geq 4.
 \end{align} 
 Moreover, a linear map $r: B\rightarrow A$ is a weak deformation map if and only if $r \in \mathfrak{b}_0 = \mathrm{Hom} (B, A)$ is a Maurer-Cartan element of the above $L_{\infty}$-algebra.
\end{thm}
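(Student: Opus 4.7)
The plan is to mirror the strong-deformation argument (Theorem \ref{Tqtalcla}) and produce the claimed $L_\infty$-algebra as an instance of Voronov's derived bracket construction (Theorem \ref{Tcvd}(i)). First I would verify that the quadruple $(\mathfrak{g}, \mathfrak{b}, P, \Omega)$ is a $V$-data. The subspace $\mathfrak{b} = \oplus_{n \geq 0} \mathrm{Hom}(B^{\otimes n+1}, A)$ sits inside $\mathfrak{g}$ as the strip $\oplus_{n \geq 0} C^{-1 | n+1}$; Lemma \ref{Lkl} gives $[C^{-1|m}, C^{-1|n}] \subset C^{-2|m+n}$, and by the bidegree convention this is the zero space, so $\mathfrak{b}$ is abelian. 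The identification of $\mathrm{ker}(P)_n$ with $C^{n+1|-1}\oplus C^{n|0}\oplus\cdots\oplus C^{0|n}$ together with Proposition \ref{Pmq} (applied to $\mathscr{Q}_\bullet$) shows that $\mathrm{ker}(P)$ is a graded Lie subalgebra. Finally $\Omega \in C^{2|-1}\oplus C^{1|0}\oplus C^{0|1}$, so $P(\Omega) = 0$, and $[\Omega,\Omega]=0$ holds by Theorem of the previous section.

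With the $V$-data in hand, Voronov's theorem produces a curved $L_\infty$-algebra on $\mathfrak{b}$ with $l_0 = P(\Omega) = 0$ (hence an actual $L_\infty$-algebra) and $l_k(r_1,\ldots,r_k) = P[\cdots[[\Omega, r_1], r_2],\ldots,r_k]$. The key mechanism that cuts these down to the three listed brackets is a bidegree count. For $r_i \in C^{-1|m_i+1}$, the iterated bracket $[\cdots[[\widetilde\theta, r_1], r_2],\ldots, r_k]$ has bidegree $(2-k)\,|\,(m_1+\cdots+m_k + k - 1)$ by Lemma \ref{Lkl}; the $\mu_{_{\vartriangleright,\vartriangleleft}}$ and $\nu_{_{\rightharpoonup,\leftharpoonup}}$ analogues have first slot $1-k$ and $-k$ respectively. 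Applying $P$ selects only those with first slot equal to $-1$. This picks out exactly $[\nu_{_{\rightharpoonup,\leftharpoonup}}, r_1]$ when $k=1$, $[[\mu_{_{\vartriangleright,\vartriangleleft}}, r_1], r_2]$ when $k=2$, and $[[[\widetilde\theta, r_1], r_2], r_3]$ when $k=3$; when $k \geq 4$ every summand falls into $C^{k'|\cdot}$ with $k' \leq -2$, which is zero. This yields the formulas (\ref{r-l1})--(\ref{r-l3}) and the vanishing of all higher $l_k$.

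For the Maurer-Cartan characterization, I would plug an arbitrary $r\colon B \to A$ into $\sum_{k \geq 1}\frac{1}{k!}\,l_k(r,\ldots,r)$ and evaluate on $(x,y)\in B\otimes B$. Unwinding the Gerstenhaber brackets, $[\nu_{_{\rightharpoonup,\leftharpoonup}}, r](x,y)$ produces $r(x\cdot_B y) - r(x)\rightharpoonup y - x\leftharpoonup r(y)$ up to sign; $\tfrac{1}{2}[[\mu_{_{\vartriangleright,\vartriangleleft}}, r], r](x,y)$ produces $r\bigl(r(x)\vartriangleright y + x \vartriangleleft r(y)\bigr) - r(x)\cdot_A r(y)$; and $\tfrac{1}{6}[[[\widetilde\theta, r], r], r](x,y)$ produces $r(\theta(r(x), r(y)))$. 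Collecting signs, the vanishing of the sum is exactly (\ref{a4}). This is the same pattern (with one higher bracket since $r$ has bidegree shifted in the opposite direction) as the computation for $\psi_r$ in Section \ref{sec6}, so I expect the bookkeeping to match verbatim.

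The only genuine technical obstacle is the sign tracking in step three; the bidegree argument and $V$-data verification are essentially mechanical. If convenient, one can bypass the direct expansion entirely by invoking the identity $\Omega_r = \Omega + \psi_r + (\text{terms in } \mathrm{ker}(P))$ established implicitly in Section \ref{sec6}, since $\psi_r = l_1(r) + \tfrac12 l_2(r,r) + \tfrac16 l_3(r,r,r)$ by construction, and then $\psi_r = 0$ is precisely the weak deformation map condition.
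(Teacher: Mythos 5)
Your proposal is correct and follows essentially the same route as the paper: verify the $V$-data $(\mathfrak{g},\mathfrak{b},P,\Omega)$, invoke Theorem \ref{Tcvd}(i), kill the higher brackets by the bidegree count (equivalently, by noting $[[[\Omega,f],g],h]\in\mathfrak{b}$ and $\mathfrak{b}$ abelian), and expand $\sum_k \frac{1}{k!}l_k(r,\ldots,r)$ on $(x,y)$ to recover (\ref{a4}); your closing observation that this sum is exactly $\psi_r$ from Section \ref{sec6} is also valid and is implicitly how the paper's two computations match up. The only blemish is the mistyped harpoons in your expansion of $[\nu_{_{\rightharpoonup,\leftharpoonup}},r](x,y)$ (it should read $r(x)\leftharpoonup y + x\rightharpoonup r(y) - r(x\cdot_B y)$, since $\rightharpoonup$ takes $B\times A$), which does not affect the argument.
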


\begin{proof}
    Since $(\mathfrak{g},\mathfrak{b}, P,\Omega)$ is a $V$-data, it follows from Theorem \ref{Tcvd} (i) that $(\mathfrak{b},\{l_k\}_{k=1}^{\infty})$ is an $L_{\infty}$-algebra, where \begin{align*}
  l_k(f_1,f_2,\ldots,f_k) = P[\cdots[[\Omega,f_1],f_2],\ldots,f_k], \text{ for } k\geq 1.    
  \end{align*}
  %Since $\Omega\in C^{2\lvert -1} \oplus C^{1\lvert 0} \oplus C^{0\lvert 1}$ has no component in $\mathfrak{b}_1 = \mathrm{Hom} (B^{\otimes 2}, A) = C^{-1\lvert 2} $, we have $P(\Omega)=0$. Which implies that $l_0=0.$ Therefore, $(\mathfrak{b},\{l_k^\alpha\}_{k=1}^{\infty})$ is an $L_{\infty}$-algebra. 
  Note that, for any homogeneous elements $f , g, h \in \mathfrak{b}$, 
  \begin{align*}
      l_1(f)=~& P[\Omega,f] = [\nu_{_{\rightharpoonup,\leftharpoonup}},f], \\
  l_2(f,g)=~&P[[\Omega,f],g]=[[\mu_{_{\vartriangleright,\vartriangleleft}},f],~g],  \\
 l_3(f,g,h)=~& P[[[\Omega,f],g],h]= [[[\widetilde{\theta},f],g],h].
      \end{align*}
      Moreover, for any $f, g, h \in \mathfrak{b}$, one can show that $[[[\Omega,f],g],h] \in \mathfrak{b}$. Since $\mathfrak{b}$ is an abelian Lie subalgebra, it follows that
\begin{align*}
    l_k (f_1, \ldots, f_k) =P [ \cdots [[ \Omega, f_1], f_2], \ldots, f_k] = 0,
\end{align*}
for $k \geq 4$ and $f_1, \ldots, f_k \in \mathfrak{b}$.
This proves the first part.

Finally, for any linear map $r:B\rightarrow A$ viewed as an element $r\in \mathfrak{b}_0 = \mathrm{Hom}(B,A)$, we have 
\begin{align*}
    &\big(\displaystyle\sum_{k=1}^{\infty} \dfrac{1}{k!}~l_k(r,\ldots,r)\big) (x,y)\\
    &= l_1(r)(x,y) + \frac{1}{2} l_2(r,r)(x,y)+ \frac{1}{6} l_3(r,r,r) (x,y)\\
    &= [\nu_{_{\rightharpoonup,\leftharpoonup}},r](x,y) + \frac{1}{2} [[\mu_{_{\vartriangleright,\vartriangleleft}},r],r](x,y) + \frac{1}{6} [[[\widetilde{\theta},r],r],r](x,y)\\
    &= -r(x\cdot_B y)+ r(x)\leftharpoonup y + x\rightharpoonup r(y) + r(x)\cdot_A r(y)- r\big( r(x)\vartriangleright y + x\vartriangleleft r(y)\big) - r\big(\theta\big(r(x),r(y)\big)\big).
\end{align*}
This shows that $r$ is a Maurer-Cartan element of the $L_{\infty}$-algebra $(\mathfrak{b},\{l_k\}_{k=1}^{\infty})$ if and only if $r$ is a weak deformation map.
\end{proof}

The $L_\infty$-algebra $(\mathfrak{b}, \{ l_k \}_{k=1}^\infty)$ is called the {\bf controlling algebra} for weak deformation maps in the given quasi-twilled associative algebra. This generalizes the already existing controlling algebras for Rota-Baxter operators of weight $0$ \cite{das-rota}, Rota-Baxter operators of weight $1$ \cite{das-weighted}, twisted Rota-Baxter operators \cite{das-ns}, Reynolds operators \cite{das-ns}, left averaging operators \cite{wang-zhou} and right averaging operators \cite{wang-zhou}. As a new application, if we consider the quasi-twilled associative algebra given in Example \ref{matched-quasi}, we get the controlling algebra for deformation maps in a matched pair of associative algebras.

\begin{prop}
    Let $(A_\mu, B_\nu, \vartriangleright, \vartriangleleft, \rightharpoonup, \leftharpoonup)$ be a matched pair of associative algebras. Then the pair $( \oplus_{n=0}^\infty \mathrm{Hom} (B^{\otimes n+1}, A), \{ l_k \}_{k=1}^\infty) $ is an $L_\infty$-algebra, where $l_k = 0$ for $k \neq 1, 2$ and
    \begin{align*}
        l_1 (f) (x_1, \ldots, x_{m+1}) =~&(-1)^{m+1} ~ \! x_1 \rightharpoonup f(x_2, \ldots, x_{m+1}) + f(x_1, \ldots, x_m) \leftharpoonup x_{m+1} \\
        ~&+ \sum_{i=1}^m (-1)^{i+m+1} ~ \! f (x_1, \ldots, x_{i-1}, x_i \cdot x_{i+1}, \ldots, x_{m+1}),
    \end{align*}
    \begin{align*}
        l_2 (f, g) &(x_1, \ldots, x_{m+n}) =   (-1)^m \sum_{i=1}^m (-1)^{(i-1)n} ~ \! f \big(   x_1, \ldots, x_{i-1}, g(x_i, \ldots, x_{i+n-1}) \vartriangleright x_{i+n}, \ldots, x_{m+n}   \big) \\
        &- (-1)^m \sum_{i=1}^m (-1)^{in} f \big(  x_1, \ldots, x_{i-1}, x_i \vartriangleleft g (x_{i+1}, \ldots, x_{i+n}), x_{i+n+1}, \ldots, x_{m+n}   \big) \\
        &- (-1)^{ m (n+1)} \bigg\{ \sum_{i=1}^n (-1)^{(i-1)m} ~ \! g \big(   x_1, \ldots, x_{i-1}, f(x_i, \ldots, x_{i+m-1}) \vartriangleright x_{i+m}, \ldots, x_{m+n}   \big) \\
        & \qquad \qquad \qquad \quad  - \sum_{i=1}^n (-1)^{im} g \big(  x_1, \ldots, x_{i-1}, x_i \vartriangleleft f (x_{i+1}, \ldots, x_{i+m}), x_{i+m+1}, \ldots, x_{m+n}   \big) \bigg\} \\
        &+ (-1)^{m (n+1)} f(x_1, \ldots, x_m ) \cdot_A g (x_{m+1}, \ldots, x_{m+n}) - (-1)^m ~ \! g(x_1, \ldots, x_n) \cdot_A f (x_{n+1}, \ldots, x_{m+n}),
    \end{align*}
    for $f \in \mathrm{Hom} (B^{\otimes m} , A)$ and $g \in \mathrm{Hom} (B^{\otimes n} , A)$. Moreover, a linear map $r: B \rightarrow A$ is a deformation map in $(A_\mu, B_\nu, \vartriangleright, \vartriangleleft, \rightharpoonup, \leftharpoonup)$ in the sense of (\ref{agore-defor}) if and only if $r$ is a Maurer-Cartan element of this $L_\infty$-algebra.
\end{prop}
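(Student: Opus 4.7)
The plan is to specialise Theorem \ref{Tqtallar} to the situation where the quasi-twilled associative algebra in question is a matched pair. Recall from Example \ref{matched-quasi} that a matched pair $(A_\mu, B_\nu, \vartriangleright, \vartriangleleft, \rightharpoonup, \leftharpoonup)$ is precisely a quasi-twilled associative algebra for which $A$ is itself a subalgebra; equivalently, in the data $(A_\mu, B_\nu, \vartriangleright, \vartriangleleft, \rightharpoonup, \leftharpoonup, \theta)$ one has $\theta = 0$. Correspondingly the element $\widetilde{\theta} \in C^{2|-1}$ vanishes, and the Maurer-Cartan element associated with the given structure simplifies to $\Omega = \mu_{_{\vartriangleright,\vartriangleleft}} + \nu_{_{\rightharpoonup,\leftharpoonup}}$.

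First, I would invoke Theorem \ref{Tqtallar} directly: it provides the $L_\infty$-algebra structure on $\mathfrak{b} = \oplus_{n=0}^\infty \mathrm{Hom}(B^{\otimes n+1}, A)$ with brackets given by (\ref{r-l1})--(\ref{r-l3}). Because $\widetilde{\theta} = 0$, the three-bracket $l_3(f,g,h) = [[[\widetilde{\theta}, f], g], h]$ vanishes identically, and of course $l_k = 0$ for all $k \geq 4$ by the same theorem. So only $l_1$ and $l_2$ are potentially non-zero.

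Next, I would unwind the abstract brackets $l_1(f) = [\nu_{_{\rightharpoonup,\leftharpoonup}}, f]$ and $l_2(f,g) = [[\mu_{_{\vartriangleright,\vartriangleleft}}, f], g]$ using the definition of the Gerstenhaber bracket and the concrete formulas for $\mu_{_{\vartriangleright,\vartriangleleft}}$ and $\nu_{_{\rightharpoonup,\leftharpoonup}}$ recalled just before Proposition \ref{Pmq}. For $l_1$, since $\nu_{_{\rightharpoonup,\leftharpoonup}}$ is a bilinear operation whose $A$-valued part is $(a,x), (b,y) \mapsto a \leftharpoonup y + x \rightharpoonup b$ and whose $B$-valued part is $(a,x), (b,y) \mapsto x \cdot_B y$, the Gerstenhaber-bracket computation, restricted to inputs in $B$ and outputs in $A$, yields precisely the Hochschild coboundary for $B_\nu$ with coefficients in the $B_\nu$-bimodule $(A, \rightharpoonup, \leftharpoonup)$, which is the formula stated for $l_1(f)$. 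For $l_2$, the inner bracket $[\mu_{_{\vartriangleright,\vartriangleleft}}, f]$ involves $f$ composed into $\mu_{_{\vartriangleright,\vartriangleleft}}$ (producing terms of the form $f(\ldots) \vartriangleright x$ and $x \vartriangleleft f(\ldots)$ together with $\mu$ terms on the $A$-side) and $\mu_{_{\vartriangleright,\vartriangleleft}}$ composed into $f$; taking a further bracket with $g$ and projecting onto $\mathrm{Hom}(B^{\otimes m+n}, A)$ produces exactly the eight sums written in the proposition, where the last two terms $\pm f \cdot_A g$ and $\pm g \cdot_A f$ come from $\mu$ applied to the two outputs of $f$ and $g$.

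The main obstacle is purely bookkeeping: keeping track of the Koszul signs $(-1)^{(i-1)n}$, $(-1)^{in}$, $(-1)^{m(n+1)}$ etc.\ when unfolding the two nested Gerstenhaber brackets, and verifying that the cross-terms involving $\mu$ on the $A$-side of $\mu_{_{\vartriangleright,\vartriangleleft}}$ only survive when both entries are pure, producing the last two summands. Once the signs are verified, the Maurer-Cartan statement is immediate: with $\theta = 0$ equation (\ref{a4}) becomes exactly the deformation-map condition (\ref{agore-defor}), so Theorem \ref{Tqtallar} yields the equivalence with being a Maurer-Cartan element.
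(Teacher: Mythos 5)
Your proposal is correct and follows exactly the route the paper intends: the paper states this proposition without a written proof, presenting it as the specialization of Theorem \ref{Tqtallar} to the matched-pair case of Example \ref{matched-quasi}, where $\theta = 0$ forces $\widetilde{\theta}=0$ and hence $l_3=0$, with the explicit formulas obtained by unwinding the Gerstenhaber brackets. Your observation that condition (\ref{a4}) reduces to (\ref{agore-defor}) when $\theta=0$ is precisely the intended justification of the Maurer--Cartan statement.
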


In Theorem \ref{Tmcla}, we have seen that a Maurer-Cartan element of a (curved) $L_{\infty}$-algebra gives rise to a new $L_{\infty}$-algebra structure by twisting. By applying this result to the present context, we get the following.
\begin{thm}\label{Tca}
    Let $(A_\mu, B_\nu, \vartriangleright, \vartriangleleft, \rightharpoonup, \leftharpoonup,\theta)$ be a quasi-twilled associative algebra and $r:B\rightarrow A$ be a weak deformation map. Then $\big(\mathfrak{b} = \oplus_{n=0}^\infty\mathrm{Hom} (B^{\otimes n+1}, A),\{l_k^r\}_{k=1}^{\infty}\big)$ is an $L_{\infty}$-algebra, where for $f, g, h \in \mathfrak{b}$,
    \begin{align*} 
     l_1^r (f) =~& l_1(f) + l_2(r,f)+ \frac{1}{2} l_3(r,r,f),\\
     l_2^r(f,g) =~& l_2(f,g) + l_3(r,f,g),\\
     l_3^r(f,g,h) =~& l_3(f,g,h),\\
     l_k^r =~& 0,~~\mathrm{for~} k\geq 4,
    \end{align*}
    where $l_1,l_2,l_3$ are given by (\ref{r-l1})-(\ref{r-l3}). Moreover, for any linear map $r':B\rightarrow A$, the sum $r+r':B\rightarrow A$ is also a weak deformation map in the given quasi-twilled associative algebra if and only if $r' \in \mathfrak{b}_0$ is a Maurer-Cartan element of the $L_{\infty}$-algebra $(\mathfrak{b},\{l_k^r\}_{k=1}^{\infty})$.
\end{thm}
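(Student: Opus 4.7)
The plan is to deduce this theorem essentially as a direct application of the Maurer-Cartan twisting construction (Theorem \ref{Tmcla}) to the $L_\infty$-algebra furnished by Theorem \ref{Tqtallar}. Since $r: B \rightarrow A$ is a weak deformation map, Theorem \ref{Tqtallar} tells us that $r$, viewed as an element of $\mathfrak{b}_0 = \mathrm{Hom}(B, A)$, is a Maurer-Cartan element of the $L_\infty$-algebra $(\mathfrak{b}, \{l_k\}_{k=1}^\infty)$ whose structure maps are given by (\ref{r-l1})--(\ref{r-l3}).

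First I would apply Theorem \ref{Tmcla} with $\alpha = r$ to obtain the twisted $L_\infty$-algebra $(\mathfrak{b}, \{l_k^r\}_{k=1}^\infty)$ with structure maps
\begin{align*}
 l_k^r(f_1,\ldots,f_k) = \sum_{n=0}^\infty \frac{1}{n!} ~ \! l_{n+k}(\underbrace{r,\ldots,r}_{n \text{ times}}, f_1,\ldots,f_k).
\end{align*}
Since $l_k = 0$ for all $k \geq 4$, this infinite sum truncates immediately, and a direct bookkeeping yields $l_1^r(f) = l_1(f) + l_2(r, f) + \tfrac{1}{2} l_3(r, r, f)$, $l_2^r(f, g) = l_2(f, g) + l_3(r, f, g)$, $l_3^r(f, g, h) = l_3(f, g, h)$, and $l_k^r = 0$ for $k \geq 4$. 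This establishes the first part.

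For the second part, the plan is to expand the Maurer-Cartan expression for $r + r'$ in the untwisted $L_\infty$-algebra via the graded-symmetric binomial expansion. Using graded symmetry of each $l_k$,
\begin{align*}
\sum_{k=1}^{\infty} \frac{1}{k!} ~ \! l_k(r+r',\ldots,r+r') = \sum_{k=1}^\infty \sum_{j=0}^k \frac{1}{j!(k-j)!} ~ \! l_k(\underbrace{r,\ldots,r}_{j},\underbrace{r',\ldots,r'}_{k-j}).
\end{align*}
Reindexing with $i = k - j$, the $i = 0$ contribution is exactly the Maurer-Cartan expression of $r$ (which vanishes since $r$ is a weak deformation map), while the $i \geq 1$ contributions reassemble into $\sum_{i \geq 1} \tfrac{1}{i!} ~ \! l_i^r(r', \ldots, r')$ by the very definition of $l_i^r$. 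Thus the Maurer-Cartan equation for $r + r'$ in $(\mathfrak{b}, \{l_k\}_{k=1}^\infty)$ is equivalent to the Maurer-Cartan equation for $r'$ in $(\mathfrak{b}, \{l_k^r\}_{k=1}^\infty)$. Invoking Theorem \ref{Tqtallar} once more (applied to $r + r'$) identifies the former with the condition that $r + r'$ is a weak deformation map, completing the proof.

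The calculations are essentially routine, so the only genuine point requiring care is the binomial reindexing step above, where one must keep track of signs and symmetry conventions in the graded setting; happily, since all inputs $r, r'$ lie in degree zero and $l_0 = 0$, no Koszul signs intervene and the manipulation is transparent.
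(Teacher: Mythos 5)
Your proposal is correct and follows essentially the same route as the paper: the first part is a direct application of Theorem \ref{Tmcla} to the Maurer--Cartan element $r$ of the $L_\infty$-algebra from Theorem \ref{Tqtallar}, and the second part is the same expansion of the Maurer--Cartan expression for $r+r'$ (the paper simply writes out the finitely many terms explicitly rather than using the general binomial reindexing). No gaps.
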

\begin{proof}
    The first part follows from Theorem \ref{Tmcla}. For the second part, we observe that \begin{align*}
   & l_1(r+r') + \frac{1}{2!} l_2(r+r',r+r')+ \frac{1}{3!} l_3(r+r',r+r',r+r')\\
    &=\underbrace{l_1(r) + \frac{1}{2!} l_2(r,r)+ \frac{1}{3!} l_3(r,r,r)}_{=0} + \big\{l_1(r') +  l_2(r,r')+ \frac{1}{2} l_3(r,r,r') \\
    & \quad + \frac{1}{2} l_2(r',r')+ \frac{1}{2} l_3(r,r',r') + \frac{1}{6} l_3(r',r',r')\big\}\\
    &= l_1^r(r') + \frac{1}{2!} l_2^r(r',r')+ \frac{1}{3!} l_3^r(r',r',r').
  \end{align*} 
  This shows that $r+r'$ is a weak deformation map if and only if $r'$ is a Maurer-Cartan element of the $L_{\infty}$-algebra $(\mathfrak{b},\{l_k^r\}_{k=1}^{\infty})$.
\end{proof}
The $L_{\infty}$-algebra $\big(\mathfrak{b} = \oplus_{n=0}^\infty\mathrm{Hom} (B^{\otimes n+1}, A),\{l_k^r\}_{k=1}^{\infty}\big)$ constructed in the above theorem is called the \textbf{governing algebra} of the deformation map $r$. This name is justified by the last part of Theorem \ref{Tca}.

    Let $r:B\rightarrow A$ be a weak deformation map in the given quasi-twilled associative algebra.
    %$(A_\mu, B_\nu, \vartriangleright, \vartriangleleft, \rightharpoonup, \leftharpoonup,\theta)$. 
    Since $(\mathfrak{b},\{l_k^r\}_{k=1}^{\infty})$ is an $L_{\infty}$-algebra, it follows that the degree $1$ map $l^r_1:\mathfrak{b}\rightarrow \mathfrak{b}$ is a differential. In the following result, we show that the map $l^r_1$ coincides with the map $\delta^r$ given in (\ref{hoch-r-weak}) up to some sign.
    
    \begin{prop}
     %Let $r:B\rightarrow A$ be a weak deformation map in the quasi-twilled associative algebra $(A_\mu, B_\nu, \vartriangleright, \vartriangleleft, \rightharpoonup, \leftharpoonup,\theta)$. 
     For any $f\in \mathfrak{b}_{n-1} = \mathrm{Hom}(B^{\otimes n},A)$, we have \begin{align*}
         l_1^r(f) = (-1)^{n-1} ~ \! \delta^r(f).
     \end{align*}   
    \end{prop}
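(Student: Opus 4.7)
The plan is to follow the same pattern as the analogous proposition for strong deformation maps in Section~\ref{sec5}: first collapse the three summands defining $l_1^r(f)$ into a single Gerstenhaber bracket against one element, then recognize that element as encoding the semidirect-product associative structure $B_r \ltimes A$, and finally identify the resulting bracket with $\delta^r(f)$ via the standard Gerstenhaber/Hochschild correspondence.

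Using the bilinearity of $[-,-]$, I would first rewrite
\[ l_1^r(f) = [\nu_{_{\rightharpoonup,\leftharpoonup}}, f] + [[\mu_{_{\vartriangleright,\vartriangleleft}}, r], f] + \tfrac{1}{2}[[[\widetilde{\theta}, r], r], f] = \bigl[\,\nu_{_{\rightharpoonup,\leftharpoonup}} + [\mu_{_{\vartriangleright,\vartriangleleft}}, r] + \tfrac{1}{2}[[\widetilde{\theta}, r], r],\ f\,\bigr] = [(\nu_{_{\rightharpoonup,\leftharpoonup}})_r,\, f], \]
where $(\nu_{_{\rightharpoonup,\leftharpoonup}})_r$ is exactly the element introduced in Section~\ref{sec6} during the construction of the deformed quasi-twilled structure. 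There it was computed that
\[ (\nu_{_{\rightharpoonup,\leftharpoonup}})_r\bigl((a,x),(b,y)\bigr) = \bigl(a \leftharpoonup_r y + x \rightharpoonup_r b,\ x \cdot_r y\bigr), \]
so this one element bundles together the new multiplication $\cdot_r$ on $B_r$ together with both bimodule actions $\rightharpoonup_r$ and $\leftharpoonup_r$ of $B_r$ on $A$, i.e. it is the multiplication on the semidirect product $B_r \ltimes A$.

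The second step is the standard fact that for the multiplication of a semidirect-product algebra, the Gerstenhaber bracket with a Hom-space cochain reproduces the Hochschild coboundary up to a sign. Writing out $[(\nu_{_{\rightharpoonup,\leftharpoonup}})_r, f]$ using the circle product formula, the bidegree reasoning from Lemma~\ref{Lkl} forces all surviving terms to land in $\mathrm{Hom}(B^{\otimes n+1}, A)$, with contributions coming only from the three structural pieces of $(\nu_{_{\rightharpoonup,\leftharpoonup}})_r$. These yield, respectively, the outer left action $x_1 \rightharpoonup_r f(\cdots)$, the alternating inner substitutions $f(\ldots, x_i \cdot_r x_{i+1}, \ldots)$, and the outer right action $f(\cdots) \leftharpoonup_r x_{n+1}$, which is precisely the expansion of $\delta^r(f)$ given in~(\ref{hoch-r-weak}).

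The main obstacle will be the careful bookkeeping of Koszul and Gerstenhaber signs: because $[\mu, f] = \mu \diamond f - (-1)^{n-1} f \diamond \mu$ for $\mu$ of arity two and $f$ of arity $n$, the three kinds of terms each acquire different intermediate signs, and one must verify that they all factor out to a common global sign $(-1)^{n-1}$ matching the Hochschild convention used in (\ref{hoch-r-weak}). Once this sign check is done, and one observes that the contributions of $[\mu_{_{\vartriangleright,\vartriangleleft}}, r]$ and $\tfrac{1}{2}[[\widetilde{\theta}, r], r]$ to $(\nu_{_{\rightharpoonup,\leftharpoonup}})_r$ are precisely what promotes $\cdot_B$ and the bare actions $\rightharpoonup, \leftharpoonup$ to $\cdot_r$, $\rightharpoonup_r$, $\leftharpoonup_r$, the identity $l_1^r(f) = (-1)^{n-1}\,\delta^r(f)$ is immediate.
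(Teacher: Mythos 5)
Your proposal is correct and follows essentially the same route as the paper: both collapse the three terms of $l_1^r(f)$ by bilinearity into the single bracket $[\nu_{_{\rightharpoonup,\leftharpoonup}}+[\mu_{_{\vartriangleright,\vartriangleleft}},r]+\tfrac{1}{2}[[\widetilde{\theta},r],r],\,f]$, identify that element with $\nu_r+\rightharpoonup_r+\leftharpoonup_r$ (i.e.\ $(\nu_{_{\rightharpoonup,\leftharpoonup}})_r$, which the paper verifies by direct evaluation on $(a,x),(b,y)$ exactly as recorded in (\ref{r-nu})), and then invoke the standard identification of the Gerstenhaber bracket against a bimodule-valued cochain with the Hochschild coboundary up to the sign $(-1)^{n-1}$.
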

    \begin{proof}
        Let $(a,x),(b,y)\in \mathbb{A}$ be arbitrary. Then by straightforward calculations, we see that 
        \begin{align*}
        [\mu_{_{\vartriangleright,\vartriangleleft}},r]((a,x),(b,y))=\big(r(x)\cdot_A b+ a\cdot_A r(y) -r(x\vartriangleleft b)-r(a\vartriangleright y), r(x)\vartriangleright y+ x\vartriangleleft r(y)\big)
        \end{align*}
        and \begin{align*}
            \frac{1}{2}[[\widetilde{\theta},r],r] ((a,x),(b,y))= \big(-r\big(\theta(r(x),b)\big)-r\big(\theta(a,r(y))\big), ~ \! \theta(r(x),r(y)) \big).
        \end{align*}
        This implies that $\nu_{_{\rightharpoonup,\leftharpoonup}}+[\mu_{_{\vartriangleright,\vartriangleleft}},r]+\frac{1}{2}[[\widetilde{\theta},r],r] = \nu_r+\rightharpoonup_r+\leftharpoonup_r$ (using the notations from (\ref{a6}), (\ref{a9}), (\ref{a10})).
        Hence we have \begin{align*}
         l_1^r (f) =~& l_1(f) + l_2(r,f)+ \frac{1}{2} l_3(r,r,f),\\ =~&
         [\nu_{_{\rightharpoonup,\leftharpoonup}},f]+[[\mu_{_{\vartriangleright,\vartriangleleft}},r],f]+ \frac{1}{2}[[[\widetilde{\theta},r],r],f]\\
         =~& [\nu_{_{\rightharpoonup,\leftharpoonup}}+[\mu_{_{\vartriangleright,\vartriangleleft}},r]+\frac{1}{2}[[\widetilde{\theta},r],r],f]\\
         =~& [\nu_r+\rightharpoonup_r+\leftharpoonup_r,f] =(-1)^{n-1} ~ \! \delta^r(f).
        \end{align*}
        This proves the result.
    \end{proof}
\subsection{Simultaneous deformations of a quasi-twilled associative algebra and a weak deformation map} In this subsection, we study the governing algebra and deformations of the pair $(\mathbb{A}, r)$ consisting of a quasi-twilled associative algebra $\mathbb{A}$ and a fixed weak deformation map $r$.

Let $A$ and $B$ be two vector spaces and
   $ \mathfrak{g}= \big(\oplus_{n=0}^\infty \mathrm{Hom} \big((A\oplus B)^{\otimes n+1}, A\oplus B \big),[~,~]\big)$ be the graded Lie algebra on the space of all multilinear maps on $A \oplus B$. We have already seen that $\mathfrak{b}= \oplus_{n=0}^\infty \mathrm{Hom} (B^{\otimes n+1}, A)$ is an abelian Lie subalgebra of $\mathfrak{g}$ and if $P:\mathfrak{g}\rightarrow \mathfrak{b}$ is the projection map then $\mathrm{ker}(P)\subset \mathfrak{g}$ is a graded Lie subalgebra. Hence the quadruple $(\mathfrak{g},\mathfrak{b}, P,\Delta=0)$ is a $V$-data.
 % Explicitly, \begin{center}
 %     $ \mathfrak{g}^n:= C^{n+1\lvert -1} \oplus C^{n\lvert 0} \oplus \cdots \oplus C^{0\lvert n}\oplus C^{-1\lvert n+1}$
 %     \end{center}
  %    and let \begin{center}
   %        $\mathscr{Q}^n:= C^{n+1\lvert -1} \oplus C^{n\lvert 0}\oplus C^{n-1\lvert 1} \oplus \cdots \oplus C^{1\lvert n-1} \oplus C^{0\lvert n}$
    %       \end{center}
        %   Then we have seen in Proposition \ref{Pmq} that $\mathscr{Q}=~\oplus_{n=0}^\infty\mathscr{Q}^n$ is a graded Lie subalgebras of $\mathfrak{g}$.
           Therefore, by applying Theorem \ref{Tcvd} (ii) to the graded Lie subalgebra $\mathscr{Q} \subset \mathfrak{g}$, we obtain the following.
           %an $L_{\infty}$-algebra structure on the graded vector space $\mathscr{Q}[1]\oplus \mathfrak{b}$. The structure maps of this $L_{\infty}$-algebra are given in the following result.
           \begin{thm}
                Let $A$ and $B$ be two vector spaces. Then $(\mathscr{Q}[1]\oplus \mathfrak{b},\{\widetilde{l}_k\}_{k=1}^{\infty})$ is an $L_{\infty}$-algebra, where 
                \begin{align*}
                    \widetilde{l}_2(F[1],G[1]) =~& (-1)^{\lvert F \rvert} [F,G][1] ,\\
\widetilde{l}_k\big(F[1],f_1,\ldots,f_{k-1}\big) =~& P[\cdots [[F,f_1],f_2],\ldots,f_{k-1} ],~~k\geq 2,
                \end{align*}
                for all homogeneous elements $F,G\in \mathscr{Q}$ and $f_1,\ldots, f_{k-1}\in \mathfrak{b}$.
           \end{thm}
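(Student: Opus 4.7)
The plan is to apply Theorem \ref{Tcvd}(ii) with the $V$-data $(\mathfrak{g}, \mathfrak{b}, P, \Delta = 0)$ and the graded Lie subalgebra $\mathfrak{h} := \mathscr{Q} \subset \mathfrak{g}$, and then to check that with the choice $\Delta = 0$ the resulting (a priori curved) $L_\infty$-algebra degenerates to an honest $L_\infty$-algebra whose only nonvanishing brackets are the two displayed in the statement.

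The first step is to verify the hypotheses of Theorem \ref{Tcvd}(ii). The paragraph preceding the statement has already established that $(\mathfrak{g}, \mathfrak{b}, P, 0)$ is a $V$-data: a bidegree argument shows $\mathfrak{b}$ is abelian (any $f, g \in \mathfrak{b}$ have $f \in C^{-1\lvert m}$ and $g \in C^{-1\lvert n}$, so Lemma \ref{Lkl} forces $[f,g] \in C^{-2\lvert m+n} = 0$), while $\ker(P) = \oplus_{n \geq 0}(C^{n+1\lvert -1} \oplus C^{n\lvert 0} \oplus \cdots \oplus C^{0\lvert n})$ is a graded Lie subalgebra (again by Lemma \ref{Lkl}), and $\Delta = 0$ trivially satisfies $[\Delta, \Delta] = 0$ and lies in $\ker(P)_1$. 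Proposition \ref{Pmq} already gives $[\mathscr{Q}, \mathscr{Q}] \subset \mathscr{Q}$, and $[\Delta, \mathscr{Q}] \subset \mathscr{Q}$ is automatic.

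The second step is to read off Voronov's structural formulas and show they degenerate to the list in the statement. Because $\Delta = 0$, every bracket containing $\Delta$ vanishes, which kills $\widetilde{l}_0 = P(\Delta)$, the $[\Delta, F][1]$ summand of $\widetilde{l}_1$, and all operations $\widetilde{l}_k(f_1, \ldots, f_k) = P[\cdots[[\Delta, f_1], \ldots], f_k]$ purely on $\mathfrak{b}$. The remaining contribution to $\widetilde{l}_1(F[1])$ is $P(F)$, which vanishes because each $\mathscr{Q}_n$-summand sits in $C^{k\lvert n-k}$ with $k \geq 0$, whereas $\mathfrak{b}_n = C^{-1\lvert n+1}$; hence $P$ annihilates $\mathscr{Q}$ and $\widetilde{l}_1 \equiv 0$. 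What survives is exactly $\widetilde{l}_2(F[1], G[1]) = (-1)^{\lvert F \rvert}[F, G][1]$ (well-defined by $[\mathscr{Q}, \mathscr{Q}] \subset \mathscr{Q}$) together with the mixed operations $\widetilde{l}_k(F[1], f_1, \ldots, f_{k-1}) = P[\cdots[[F, f_1], \ldots], f_{k-1}]$ for $k \geq 2$, which is the claim. The only real work is this bidegree bookkeeping; there is no substantial obstacle since the $L_\infty$ axioms are inherited wholesale from Theorem \ref{Tcvd}(ii).
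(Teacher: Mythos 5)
Your proposal is correct and follows the same route as the paper, which simply applies Theorem \ref{Tcvd}(ii) to the $V$-data $(\mathfrak{g},\mathfrak{b},P,\Delta=0)$ with the subalgebra $\mathscr{Q}$; your additional bidegree bookkeeping (in particular that $P$ annihilates $\mathscr{Q}$ since every summand of $\mathscr{Q}_n$ is some $C^{k\lvert n-k}$ with $k\geq 0$ while $\mathfrak{b}_n=C^{-1\lvert n+1}$, so $\widetilde{l}_1\equiv 0$) correctly accounts for why the operation $\widetilde{l}_1$ is absent from the statement, a point the paper leaves implicit.
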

 Next, suppose there are bilinear maps  
  \begin{align*} 
    &\mu: A \times A\rightarrow A, \qquad  \nu: B \times B\rightarrow B,\\
    \vartriangleright ~: A \times B\rightarrow B, \quad \vartriangleleft ~ : B \times A & \rightarrow B, \quad  \rightharpoonup ~: B \times A\rightarrow A, \quad \leftharpoonup ~ : A \times B\rightarrow A, \quad \theta: A \times A\rightarrow B
    \end{align*}
    and a linear map $r:B \rightarrow A$. We consider the element 
      $\Omega =\widetilde{\theta}+\mu_{_{\vartriangleright,\vartriangleleft}}+\nu_{_{\rightharpoonup,\leftharpoonup}}\in C^{2\lvert -1} \oplus C^{1\lvert 0} \oplus C^{0\lvert 1} = \mathscr{Q}_1.$
 %   Hence \begin{align*}
  %      \alpha =~(\Omega[1],r)\in (\mathscr{Q}[1]\oplus \mathfrak{b})_0
 %  \end{align*}
%is a degree 0 element of the graded vector space $\mathscr{Q}[1]\oplus \mathfrak{b}$. With the above notations, we have the following result.

\begin{thm}
 With the above notations, the tuple $(A_\mu, B_\nu, \vartriangleright, \vartriangleleft, \rightharpoonup, \leftharpoonup,\theta)$ is a quasi-twilled associative algebra and $r: B\rightarrow A$ is a weak deformation map in it if and only if the element $\alpha =(\Omega[1],r)\in (\mathscr{Q}[1]\oplus \mathfrak{b})_0$ is a Maurer-Cartan element of the $L_{\infty}$-algebra $(\mathscr{Q}[1]\oplus \mathfrak{b},\{ \widetilde{l}_k\}_{k=1}^{\infty})$.   
\end{thm}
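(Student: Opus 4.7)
The plan is to expand the Maurer-Cartan equation $\sum_{k\geq 1} \frac{1}{k!}\widetilde{l}_k(\alpha,\ldots,\alpha) = 0$ for $\alpha = (\Omega[1], r) \in (\mathscr{Q}[1]\oplus\mathfrak{b})_0$, separate it into its $\mathscr{Q}[1]$- and $\mathfrak{b}$-components, and match these against the two conditions to be established. This is the exact analogue of Theorem \ref{mc-simulaneous}, with the controlling algebra from Theorem \ref{Tqtallar} (which has a nontrivial $l_3$) replacing the one from Theorem \ref{Tqtalcla}.

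First, since the structure maps $\widetilde{l}_k$ come from Voronov with $\Delta = 0$, the only surviving multilinear operations are $\widetilde{l}_2(F[1],G[1]) = (-1)^{|F|}[F,G][1]$ and $\widetilde{l}_k(F[1],f_1,\ldots,f_{k-1}) = P[\cdots[[F,f_1],f_2],\ldots,f_{k-1}]$ for $k\geq 2$, together with $\widetilde{l}_1(F[1]) = P(F)$; all operations with no $\mathscr{Q}[1]$-entry vanish because $\Delta=0$, and operations with more than one $\mathscr{Q}[1]$-entry (other than $\widetilde{l}_2$) are not among those listed. Because $\Omega[1]$ and $r$ both sit in degree $0$, I would expand $\widetilde{l}_k(\alpha,\ldots,\alpha)$ multilinearly as for ordinary symmetric functions: the $k$th term contributes exactly one copy with $\Omega[1]$ in each position (giving a factor of $k$ times $P[\cdots[[\Omega,r],\ldots,r]]$ with $k-1$ copies of $r$) plus the single term $\widetilde{l}_2(\Omega[1],\Omega[1]) = -[\Omega,\Omega][1]$.

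Second, the crucial finiteness step: I would use the bidegree calculus of Lemma \ref{Lkl} to show $\widetilde{l}_k(\alpha,\ldots,\alpha) = 0$ for $k\geq 5$. Indeed, $\Omega \in C^{2|-1}\oplus C^{1|0}\oplus C^{0|1}$ and $r\in C^{-1|1}$, so $[\Omega,r]\in C^{1|0}\oplus C^{0|1}\oplus C^{-1|2}$, then $[[\Omega,r],r]\in C^{0|1}\oplus C^{-1|2}$, and $[[[\Omega,r],r],r]\in C^{-1|2} \subset \mathfrak{b}$; since $\mathfrak{b}$ is abelian (any further bracket would have first bidegree $-2$), the four-fold iterated bracket already vanishes. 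So the Maurer-Cartan sum collapses to four terms and gives
\begin{align*}
-\tfrac{1}{2}[\Omega,\Omega][1] \;+\; P[\Omega,r] \;+\; \tfrac{1}{2}P[[\Omega,r],r] \;+\; \tfrac{1}{6}P[[[\Omega,r],r],r] \;=\; 0,
\end{align*}
which, since the first summand lies in $\mathscr{Q}[1]$ and the remaining three lie in $\mathfrak{b}$, is equivalent to the pair of conditions $[\Omega,\Omega] = 0$ and $l_1(r) + \tfrac12 l_2(r,r) + \tfrac16 l_3(r,r,r) = 0$ with the $l_k$ of Theorem \ref{Tqtallar}.

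Finally, I would invoke the two preceding characterizations to close the argument: the vanishing $[\Omega,\Omega] = 0$ is equivalent to $(A_\mu,B_\nu,\vartriangleright,\vartriangleleft,\rightharpoonup,\leftharpoonup,\theta)$ being a quasi-twilled associative algebra (as shown in the proof following Proposition \ref{Pmq}), while, assuming this, the $\mathfrak{b}$-equation is by Theorem \ref{Tqtallar} exactly the condition that $r\in\mathfrak{b}_0$ be a Maurer-Cartan element of the controlling $L_\infty$-algebra, i.e., a weak deformation map. The main technical obstacle is bookkeeping: correctly justifying, via the bidegree argument above, that all higher operations on $\alpha$ vanish and that the combinatorial coefficients from the multilinear expansion of $\widetilde{l}_k(\alpha,\ldots,\alpha)$ reproduce precisely the Maurer-Cartan coefficients $1/k!$ attached to $l_k(r,\ldots,r)$; everything else is a direct application of Theorems \ref{Tcvd}, \ref{Tqtallar}, and the earlier Maurer-Cartan characterization of $\Omega$.
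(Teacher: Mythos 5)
Your proposal is correct and follows essentially the same route as the paper: the bidegree argument showing $[[[\Omega,r],r],r]\in\mathfrak{b}$ (hence vanishing of $\widetilde{l}_k$ for $k\geq 5$), the multilinear expansion producing $\big(-\tfrac12[\Omega,\Omega][1],\ l_1(r)+\tfrac12 l_2(r,r)+\tfrac16 l_3(r,r,r)\big)$, and the appeal to the earlier Maurer--Cartan characterizations are exactly the paper's steps. The only detail worth adding is the explicit observation that $\widetilde{l}_1(\alpha)=P(\Omega)=0$ because $P$ here projects onto $\mathfrak{b}=\oplus_n\mathrm{Hom}(B^{\otimes n+1},A)$ while $\Omega$ has no $C^{-1|\ast}$ component, which your bidegree bookkeeping already covers implicitly.
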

\begin{proof}
It follows from the definition of the structure maps that $\widetilde{l}_1\big((\Omega[1],r)\big) = 0$. On the other hand, using the bidegree reason, one can show that $[[[\Omega,r],r],r] \in \mathfrak{b}$ and therefore,
\begin{align*}
    [[[[\Omega,r],r],r],r] =~0
\end{align*} 
as $\mathfrak{b}$ is an abelian Lie subalgebra.
As a result, we get that $\widetilde{l}_k \big((\Omega[1],r),\ldots,(\Omega[1],r) \big)=~0,$ for all $k\geq 5$. Hence 
\begin{align*}
    &\displaystyle\sum_{k=1}^{\infty} \dfrac{1}{k!}~\widetilde{l}_k\big((\Omega[1],r),\ldots,(\Omega[1],r)\big)\\
 &=  \frac{1}{2!} ~ \! \widetilde{l}_2\big((\Omega[1],r),(\Omega[1],r)\big)+ \frac{1}{3!} ~ \! \widetilde{l}_3\big((\Omega[1],r),(\Omega[1],r),(\Omega[1],r)\big) + \frac{1}{4!} ~ \! \widetilde{l}_4\big((\Omega[1],r),\ldots,(\Omega[1],r)\big) \\  
&= \frac{1}{2!} \big\{ \widetilde{l}_2(\Omega[1],\Omega[1])+ 2 ~ \! 
 \widetilde{l}_2(\Omega[1],r)\big\} + \frac{1}{3!} \big\{3 ~ \! \widetilde{l}_3(\Omega[1],r,r)\big\} + \frac{1}{4!}  \big\{4 ~ \! \widetilde{l}_4(\Omega[1],r,r,r)\big\}\\
&= \big(-\frac{1}{2} [\Omega,\Omega][1], ~ \! P[\Omega,r]+ \frac{1}{2!} P[[\Omega,r],r] + \frac{1}{3!} P[[[\Omega,r],r],r] \big)\\
&= \big(-\frac{1}{2} [\Omega,\Omega][1], ~ \! l_1 (r) + \frac{1}{2} l_2 (r,r) + \frac{1}{6} l_3 (r,r,r) \big),
\end{align*}
where the maps $l_1, l_2, l_3$ are respectively given in (\ref{r-l1})-(\ref{r-l3}).
We know that $[\Omega,\Omega]=0$ if and only if the tuple $(A_\mu, B_\nu, \vartriangleright, \vartriangleleft, \rightharpoonup, \leftharpoonup, \theta )$ is a quasi-twilled associative algebra. On the other hand, we have seen in Theorem \ref{Tqtallar} that $l_1 (r) + \frac{1}{2} l_2 (r,r) + \frac{1}{6} l_3 (r,r,r) = 0$ if and only if 
 $r: B \rightarrow A$ is a weak deformation map in the above quasi-twilled associative algebra. Hence the result follows.
\end{proof}

Next, let $(\mathbb{A}, r)$ be a pair of a quasi-twilled associative algebra $\mathbb{A}$ with a fixed decomposition $\mathbb{A} = A \oplus B$ and a weak deformation map $r$ in it. Then by applying Theorem \ref{Tmcla}, we get the following result.

\begin{thm}
   Let $(\mathbb{A}, r)$ be a pair of a quasi-twilled associative algebra $\mathbb{A}$ with a fixed decomposition $\mathbb{A} = A \oplus B$ and a weak deformation map $r$ in it. Then the pair $\big(  \mathscr{Q}[1] \oplus \mathfrak{b} , \{  \widetilde{l}_k^{ (\Omega [1], r)}  \}_{k=1}^\infty \big)$ is an $L_\infty$-algebra, where 
   \begin{align*}
        \widetilde{l}_k^{ (\Omega [1], r)} \big(  (F_1 [1], f_1), \ldots, (F_k [1], f_k) \big) := \sum_{n=0}^\infty \frac{1}{n!} ~ \! \widetilde{l}_{n+k} \big( (\Omega [1], r), \ldots,  (\Omega [1], r) , (F_1 [1], f_1), \ldots, (F_k [1], f_k)     \big),
   \end{align*}
   for $k \geq 1$ and homogeneous elements $(F_1 [1], f_1), \ldots, (F_k [1], f_k) \in \mathscr{Q}[1] \oplus \mathfrak{b}$. Moreover, for any other element $\Omega' \in \mathscr{Q}_1$ and a linear map $r': B \rightarrow A$, the sum $\Omega + \Omega' \in \mathscr{Q}_1$ represents a new quasi-twilled associative algebra structure on $\mathbb{A}$ and $r + r': B \rightarrow A$ is a weak deformation map in this new structure if and only if $(\Omega' [1], D') \in (\mathscr{Q}[1]\oplus \mathfrak{b})_0$ is a Maurer-Cartan element of  $  \big(  \mathscr{Q}[1]\oplus \mathfrak{b},\{ \widetilde{l}_k^{   ( \Omega [1], r)  } \}_{k=1}^{\infty}  \big)$.
\end{thm}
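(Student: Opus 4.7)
The plan is to deduce the theorem as a direct application of Theorem \ref{Tmcla} combined with the preceding Maurer-Cartan characterization theorem. First, by the preceding theorem, the element $(\Omega[1], r) \in (\mathscr{Q}[1] \oplus \mathfrak{b})_0$ is a Maurer-Cartan element of the $L_\infty$-algebra $(\mathscr{Q}[1] \oplus \mathfrak{b}, \{\widetilde{l}_k\}_{k=1}^\infty)$ precisely because $\mathbb{A}$ is a quasi-twilled associative algebra (encoded by $\Omega$) and $r$ is a weak deformation map in it. Hence Theorem \ref{Tmcla} applies and produces the twisted $L_\infty$-algebra $\big(\mathscr{Q}[1] \oplus \mathfrak{b}, \{\widetilde{l}_k^{(\Omega[1], r)}\}_{k=1}^\infty\big)$ whose structure maps are the ones prescribed in the statement. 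This establishes the first part without further work.

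For the second part, I would combine two observations. On the one hand, applying the preceding Maurer-Cartan theorem to the pair $(\Omega + \Omega', r + r')$ shows that $(\Omega[1] + \Omega'[1], r + r')$ is a Maurer-Cartan element of the untwisted $L_\infty$-algebra $(\mathscr{Q}[1] \oplus \mathfrak{b}, \{\widetilde{l}_k\}_{k=1}^\infty)$ if and only if $\Omega + \Omega'$ defines a quasi-twilled associative algebra structure on $\mathbb{A} = A \oplus B$ and $r + r'$ is a weak deformation map in it. On the other hand, a standard property of Maurer-Cartan twisting (already visible in the computations carried out for strong deformation maps, in particular the expansion leading to Theorem \ref{thm-govern-d}) says that if $\alpha$ is a Maurer-Cartan element of an $L_\infty$-algebra, then $\alpha + \beta$ is also a Maurer-Cartan element if and only if $\beta$ is a Maurer-Cartan element of the algebra twisted by $\alpha$. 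Applying this with $\alpha = (\Omega[1], r)$ and $\beta = (\Omega'[1], r')$ gives the desired equivalence.

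To make the second step self-contained, I would briefly justify the twisting fact by expanding
\[
\sum_{k=1}^{\infty} \frac{1}{k!} \widetilde{l}_k\big(\alpha + \beta, \ldots, \alpha + \beta\big)
\]
using the graded symmetry of $\widetilde{l}_k$ and the multinomial expansion, grouping the terms according to the number of $\alpha$-entries. The terms with only $\alpha$-entries sum to zero because $\alpha$ is a Maurer-Cartan element, while the remaining terms reassemble exactly into $\sum_{k \geq 1} \frac{1}{k!} \widetilde{l}_k^{\alpha}(\beta, \ldots, \beta)$ by definition of the twisted brackets. This expansion is a general fact about (weakly filtered) $L_\infty$-algebras and requires no properties specific to the ambient graded Lie algebra $\mathfrak{g}$.

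The main subtlety, and where I would exercise some care, is the convergence and the bookkeeping of the multinomial expansion: one must verify that the weakly filtered hypothesis on $\mathscr{Q}[1] \oplus \mathfrak{b}$ guarantees that all the formal sums involved are well-defined and that the rearrangement of terms by the number of $\alpha$-slots is legitimate. Since both $\alpha$ and $\beta$ live in degree $0$ and the structure maps $\widetilde{l}_k$ raise filtration (inherited from the filtration of $\mathscr{Q}[1] \oplus \mathfrak{b}$ coming from tensor degree in $A$ and $B$), convergence is automatic; hence the rearrangement is valid and the equivalence follows. No additional ingredients beyond Theorem \ref{Tmcla} and the preceding Maurer-Cartan characterization are required.
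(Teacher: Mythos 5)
Your proposal is correct and follows essentially the same route as the paper, which derives this theorem directly from Theorem \ref{Tmcla} applied to the Maurer-Cartan element $(\Omega[1],r)$ established in the preceding theorem, with the second part following from the standard expansion argument the paper carries out explicitly in the analogous Theorems \ref{thm-govern-d} and \ref{Tca}. Your additional remarks on the multinomial regrouping and the weakly filtered hypothesis are a reasonable elaboration of details the paper leaves implicit, but they do not constitute a different approach.
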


The last part of the above theorem says that the $L_\infty$-algebra   $  \big(  \mathscr{Q}[1]\oplus \mathfrak{b},\{ \widetilde{l}_k^{   ( \Omega [1], r)  } \}_{k=1}^{\infty}  \big)$ governs the simultaneous deformations of the given quasi-twilled associative structure on $\mathbb{A}$ and the weak deformation map $r$. For this reason, we call this $L_\infty$-algebra as the {\bf governing algebra} for the pair $(\mathbb{A}, r)$. This generalizes the $L_\infty$-algebras that govern deformations of Rota-Baxter algebras \cite{das-mishra,wang-zhou2} and left (resp. right) averaging algebras \cite{wang-zhou}.

%\textcolor{red}{ we need to write the cohomological relation of $r$, $(A, B, \vartriangleright, \vartriangleleft, \rightharpoonup, \leftharpoonup, \theta )$  and $((A, B, \vartriangleright, \vartriangleleft, \rightharpoonup, \leftharpoonup, \theta ),r)$}

\section{Rota-Baxter operators twisted by non-abelian 2-cocycles and twisted tridendriform algebras}\label{sec8}

In this section, we first consider Rota-Baxter operators twisted by non-abelian $2$-cocycles. Then we introduce a new algebraic structure, called ``twisted tridendriform algebra''. We show that a Rota-Baxter operator twisted by some non-abelian $2$-cocycle induces a twisted tridendriform algebra structure and conversely, any twisted tridendriform algebra arises in this way.

 Let $A$ and $B$ be two associative algebras. Recall that \cite{agore4} a {\bf non-abelian $2$-cocycle} on $A$ with values in $B$ is a triple $(\vartriangleright, \vartriangleleft,\theta)$ consisting of bilinear maps $\vartriangleright ~: A \times B\rightarrow B,~~ \vartriangleleft ~: B \times A\rightarrow B$ and $\theta: A \times A\rightarrow B$ such that the following conditions are hold:
 \begin{align}
          (a\cdot_A b) \vartriangleright x + \theta(a,b) \cdot_B x =~& a \vartriangleright (b\vartriangleright x), \label{nab1}\\
         (a\vartriangleright x) \vartriangleleft b =~& a \vartriangleright (x\vartriangleleft b), \label{nab2}\\
         (x\vartriangleleft a)\vartriangleleft b =~& x\vartriangleleft (a\cdot_A b) + x \cdot_B \theta(a,b), \label{nab3}\\
         (a \vartriangleright x ) \cdot_B y =~& a \vartriangleright (x \cdot_B y), \label{nab4}\\
         (x \vartriangleleft a) \cdot_B y =~& x \cdot_B (a \vartriangleright y), \label{nab5}\\
         (x \cdot_B y) \vartriangleleft a =~& x \cdot_B (y \vartriangleleft a), \label{nab6}\\
        \theta ( a, b) \vartriangleleft c + \theta (a \cdot_A b, c) =~& a \vartriangleright \theta (b, c) + \theta (a, b \cdot_A c), \label{nab7}
          \end{align}
 for any $a,b,c\in A$ and $x , y \in B$.
 Non-abelian 2-cocycles defined above are closely related to non-abelian extensions of the associative algebra $A$ by the algebra $B$.
 
 \begin{remark} \cite{agore4} \label{ex9}
  Let $A$ and $B$ be two associative algebras and $(\vartriangleright, \vartriangleleft,\theta)$ be a non-abelian 2-cocycle on $A$ with values in $B$. Then the direct sum $A\oplus B$ can be given an associative algebra structure with the multiplication  \begin{align*}
      (a,x)\boxtimes (b,y) = (a \cdot_A b,~ x \cdot_B y +a \vartriangleright y + x\vartriangleleft b + \theta(a,b) ),
  \end{align*}
  for $(a,x),(b,y)\in A\oplus B$.
  Then $(A\oplus B,\boxtimes)$ is a quasi-twilled associative algebra.    
 \end{remark}

\begin{defn}
    Let $A, B$ be two associative algebras and $(\vartriangleright, \vartriangleleft, \theta)$ be a non-abelian $2$-cocycle on $A$ with values in $B$. A linear map $r: B \rightarrow A$ is said to be a {\bf $(\vartriangleright, \vartriangleleft, \theta)$-twisted Rota-Baxter operator} if it satisfies
    \begin{align}\label{rbt-nab}
        r(x) \cdot_A r(y) = r \big(  r(x) \vartriangleright y + x \vartriangleleft r (y) + x \cdot_B y + \theta ( r(x), r (y)) \big), \text{ for } x, y \in B.
    \end{align}
\end{defn}

It follows that a $(\vartriangleright, \vartriangleleft, \theta)$-twisted Rota-Baxter operator is nothing but a weak deformation map in the quasi-twilled associative algebra $(A\oplus B,\boxtimes)$ given above.

\begin{defn}\label{defn-tta}
A {\bf twisted tridendriform algebra} is a tuple $(\mathcal{A},\prec,\succ,\curlyvee,\cdot)$ consisting of a vector space $\mathcal{A}$ endowed with four bilinear maps $\prec, \succ, \curlyvee, \cdot:\mathcal{A}\times\mathcal{A} \rightarrow \mathcal{A}$ such that for any $x, y, z \in \mathcal{A}$, the following set of identities are hold:
\begin{align}
    (x \prec y) \prec z =~& x \prec (y \ast z) + x \cdot (y \curlyvee z), \tag{TT1} \label{tt1} \\
    (x \succ y) \prec z =~& x \succ (y \prec z), \tag{TT2} \label{tt2}\\
    (x \star y) \succ z + (x \curlyvee y) \cdot z =~& x \succ (y \succ z), \tag{TT3} \label{tt3}\\
    (x \succ y) \cdot z =~& x \succ (y \cdot z), \tag{TT4} \label{tt4}\\
    (x \prec y) \cdot z =~& x \cdot (y \succ z), \tag{TT5} \label{tt5}\\
    (x \cdot y) \prec z =~& x \cdot (y \prec z), \tag{TT6} \label{tt6}\\
    (x \cdot y)\cdot z =~& x \cdot (y \cdot z), \tag{TT7} \label{tt7}\\
    (x \curlyvee y) \prec z + (x \star y) \curlyvee z =~& x \succ (y \curlyvee z) + x \curlyvee (y \star z), \tag{TT8} \label{tt8}
\end{align}
where $x \star y := x \prec y + x \succ y + x \curlyvee y + x \cdot y$, for any $x, y \in \mathcal{A}.$
\end{defn}

\begin{remark}
\begin{itemize}
    \item[(i)] Let $(\mathcal{A}, \prec, \succ, \curlyvee, \cdot)$ be a twisted tridendriform algebra. Then it follows from (\ref{tt7}) that $(\mathcal{A}, \cdot)$ is an associative algebra. 
    \item[(ii)] In a twisted tridendriform algebra $(\mathcal{A}, \prec, \succ, \curlyvee, \cdot)$, if $\curlyvee$ is trivial then $(\mathcal{A},\prec,\succ,\cdot)$ is a tridendriform algebra considered by Loday and Ronco \cite{loday-ronco}. If $\cdot~ \!$ is trivial then $(\mathcal{A},\prec,\succ,\curlyvee)$ is simply an NS-algebra considered by Uchino \cite{uchino}. Finally, if $\curlyvee$ and $\cdot$ both are trivial then $(\mathcal{A},\prec,\succ)$ becomes a dendriform algebra \cite{loday-dialgebra}.
\end{itemize}    
\end{remark}

%\textcolor{red}{Example}

In the following, we show that Rota-Baxter operators twisted by non-abelian $2$-cocycles induce twisted tridendriform structures.
\begin{prop}\label{Ptta}
 Let $A$ and $B$ be two associative algebras and $(\vartriangleright, \vartriangleleft,\theta)$ be a non-abelian $2$-cocycle on $A$ with values in $B$. Let $r:B\rightarrow A$ be a $(\vartriangleright, \vartriangleleft,\theta)$-twisted Rota-Baxter operator. Then $(B,\prec_r,\succ_r,\curlyvee_r,\cdot_B)$ is a twisted tridendriform algebra, where 
 \begin{align*}
     x \prec_r y:=~   x\vartriangleleft r(y),\quad x\succ_r y:=~ r(x)\vartriangleright y \quad \mathrm{and} \quad x\curlyvee_ry:=~\theta (r(x),r(y)),~~\mathrm{for~}x,y\in B.
 \end{align*}  
\end{prop}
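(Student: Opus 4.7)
The plan is to verify each of the eight twisted tridendriform axioms \eqref{tt1}--\eqref{tt8} directly from the non-abelian $2$-cocycle identities \eqref{nab1}--\eqref{nab7} and from the $(\vartriangleright, \vartriangleleft, \theta)$-twisted Rota-Baxter equation \eqref{rbt-nab}. The key observation that makes the proof mechanical is that the combined operation appearing in Definition \ref{defn-tta} satisfies
\begin{align*}
 x \star y = x \vartriangleleft r(y) + r(x) \vartriangleright y + \theta(r(x), r(y)) + x \cdot_B y,
\end{align*}
so that \eqref{rbt-nab} translates into the single compact identity $r(x \star y) = r(x) \cdot_A r(y)$ for all $x, y \in B$. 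This identity is what converts axioms involving $\star$ on the inside of a $\prec_r$, $\succ_r$ or $\curlyvee_r$ into statements purely about $A$-operations on elements in the image of $r$.

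The verification then proceeds by matching axioms to cocycle identities. First I would handle the ``easy'' axioms that directly reduce to a single cocycle condition after unfolding the definitions: \eqref{tt2} is exactly \eqref{nab2} applied to $a=r(x)$, $b=r(z)$; \eqref{tt4} is \eqref{nab4}; \eqref{tt5} is \eqref{nab5}; \eqref{tt6} is \eqref{nab6}; and \eqref{tt7} is simply the associativity of $\cdot_B$. Next I would treat the three axioms \eqref{tt1}, \eqref{tt3}, \eqref{tt8} where the variable $\star$ appears inside an $r$. For \eqref{tt1}, using $r(y \star z) = r(y) \cdot_A r(z)$ reduces the right-hand side to $x \vartriangleleft (r(y) \cdot_A r(z)) + x \cdot_B \theta(r(y), r(z))$, which matches the left-hand side $(x \vartriangleleft r(y)) \vartriangleleft r(z)$ by \eqref{nab3}. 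For \eqref{tt3}, applying the twisted Rota-Baxter identity to the first term on the left gives $(r(x) \cdot_A r(y)) \vartriangleright z + \theta(r(x), r(y)) \cdot_B z$, which equals $r(x) \vartriangleright (r(y) \vartriangleright z)$ by \eqref{nab1}. For \eqref{tt8}, both occurrences of $\star$ become $r$-products, and the resulting identity
\begin{align*}
 \theta(r(x), r(y)) \vartriangleleft r(z) + \theta(r(x) \cdot_A r(y), r(z)) = r(x) \vartriangleright \theta(r(y), r(z)) + \theta(r(x), r(y) \cdot_A r(z))
\end{align*}
is precisely \eqref{nab7} with $a = r(x)$, $b = r(y)$, $c = r(z)$.

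There is no real obstacle; the proof is a systematic translation in which the twisted Rota-Baxter equation absorbs every ``$\star$ inside $r$'', and each axiom then collapses to one of the seven cocycle conditions (or associativity of $\cdot_B$). The only point deserving a little care is the bookkeeping in \eqref{tt1}, \eqref{tt3} and \eqref{tt8}, where one must ensure that the $\theta$-correction terms coming from the definitions of $\prec_r$, $\succ_r$, $\curlyvee_r$ align correctly with the $\theta$-terms produced by applying the cocycle identities. I would present the argument as a table or a brief case-by-case verification, making the correspondence axiom $\leftrightarrow$ cocycle identity explicit rather than writing out all expansions.
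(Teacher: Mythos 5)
Your proposal is correct and follows essentially the same route as the paper: unfold the definitions, use the twisted Rota--Baxter identity to rewrite $r(y\star z)$ as $r(y)\cdot_A r(z)$ wherever $\star$ lands inside $r$, and then match each axiom to a cocycle condition with exactly the correspondence you list (\eqref{tt1}$\leftrightarrow$\eqref{nab3}, \eqref{tt2}$\leftrightarrow$\eqref{nab2}, \eqref{tt3}$\leftrightarrow$\eqref{nab1}, \eqref{tt4}--\eqref{tt6}$\leftrightarrow$\eqref{nab4}--\eqref{nab6}, \eqref{tt7}$\leftrightarrow$ associativity of $\cdot_B$, \eqref{tt8}$\leftrightarrow$\eqref{nab7}). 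No gaps.
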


\begin{proof}
Let $x, y, z \in B$. Then we have
\begin{align*}
    (x \prec_r y ) \prec_r z = (x \vartriangleleft r (y)) \vartriangleleft r (z) \stackrel{(\ref{nab3})}{=}~& x \vartriangleleft \big(  r(y) \cdot_A r (z) \big) + x \cdot_B \theta ( r(y), r(z)) \\
    =~& x \prec_r (y \star z) + x \cdot_B (y \curlyvee_r z) \quad (\text{by} ~(\ref{rbt-nab})),
\end{align*}
where $x \star y = x \prec_r y + x \succ_r y + x \curlyvee_r y + x \cdot_B y$, for any $x , y \in B.$ 
This verifies the identity (\ref{tt1}). Similarly, we have
\begin{align*}
    (x \succ_r y) \prec_r z = ( r (x) \vartriangleright y) \vartriangleleft r (z) \stackrel{(\ref{nab2})}{=}~& r(x) \vartriangleright (y \vartriangleleft r(z)) = x \succ_r (y \prec_r z), \\ 
    (x \star y) \succ_r z + (x \curlyvee_r y) \cdot_B z =~& r (x \star y) \vartriangleright z + \theta ( r(x), r (y)) \cdot_B z \\
    =~& ( r(x) \cdot_A r(y)) \vartriangleright z + \theta ( r(x) , r(y)) \cdot_B z \quad (\text{by} ~(\ref{rbt-nab})) \\
    \stackrel{(\ref{nab1})}{=}~& r(x) \vartriangleright ( r(y) \vartriangleright z) = x \succ_r (y \succ_r z),\\
    (x \succ_r y) \cdot_B z = (r(x) \vartriangleright y) \cdot_B z \stackrel{(\ref{nab4})}{=}~& r(x) \vartriangleright (y \cdot_B z) = x \succ_r (y \cdot_B z), \\
    (x \prec_r y) \cdot_B z = (x \vartriangleleft r(y)) \cdot_B z \stackrel{(\ref{nab5})}{=}~& x \cdot_B ( r(y) \vartriangleright z) = x \cdot_B (y \succ_r z),\\
    (x \cdot_B y) \prec_r z = (x \cdot_B y) \vartriangleleft r(z) \stackrel{(\ref{nab6})}{=}~& x \cdot_B (y \vartriangleleft r(z)) = x \cdot_B (y \prec_r z)
\end{align*}
which proves (\ref{tt2})-(\ref{tt6}). The identity (\ref{tt7}) also follows as $(B, \cdot_B)$ is an associative algebra. Finally,
\begin{align*}
    (x \curlyvee_r y) \prec_r z + (x \star y) \curlyvee_r z =~& \theta ( r(x), r(y)) \vartriangleleft r (z) + \theta ( r(x) \cdot_A r(y), r(z) ) \\
    \stackrel{(\ref{nab7})}{=}~& r (x) \vartriangleright \theta (r(y), r(z)) + \theta ( r(x), r(y) \cdot_A r(z)) \\
    =~& x \succ_r (y \curlyvee_r z) + x \curlyvee_r (y \star z)
\end{align*}
which verifies (\ref{tt8}). Hence $(B, \prec_r, \succ_r, \curlyvee_r, \cdot_B)$ is a twisted tridendriform algebra.
\end{proof}

In \cite{ebrahimi} K. Ebrahimi-Fard showed that a relative Rota-Baxter operator of weight $1$ induces a tridendriform algebra structure. On the other hand, Uchino \cite{uchino} showed that a twisted Rota-Baxter operator induces an NS-algebra structure. It is important to remark that Proposition \ref{Ptta} unifies both of these results.

In the following, we prove the converse of Proposition \ref{Ptta}. More precisely, we show that any twisted tridendriform algebra is always induced by a Rota-Baxter operator twisted by some non-abelian 2-cocycle. 
\begin{thm}
 Let $(\mathcal{A},\prec,\succ,\curlyvee,\cdot)$ be a twisted tridendriform algebra. \begin{itemize}
     \item [(i)] Then $(\mathcal{A},\star)$ is an associative algebra, where \begin{align*}
         x\star y:=~ x\prec y+x \succ y+x\curlyvee y+ x\cdot y,~~\mathrm{for~} x, y \in \mathcal{A}.
         \end{align*}
      \item [(ii)] We define bilinear maps $\vartriangleright,\vartriangleleft,\theta: \mathcal{A} \times \mathcal{A}\rightarrow \mathcal{A}$ by 
      \begin{align*}
          x \vartriangleright y := x \succ y, \quad x \vartriangleleft y := x \prec y ~~~~ \text{ and } ~~~~ \theta (x, y) := x \curlyvee y, \text{ for } x, y \in \mathcal{A}.
      \end{align*}
      Then $(\vartriangleright, \vartriangleleft,\theta)$ is a non-abelian $2$-cocycle on the associative algebra $(\mathcal{A}, {\star})$ with values in the algebra $(\mathcal{A},{\cdot})$.
     \item [(iii)] The identity map $\mathrm{Id}: (\mathcal{A}, \cdot) \rightarrow (\mathcal{A}, \star)$ is a $(\vartriangleright, \vartriangleleft,\theta)$-twisted Rota-Baxter operator. Moreover, the induced twisted
     tridendriform algebra structure on $(\mathcal{A}, {\cdot})$ coincides with the given one. 
 \end{itemize}   
\end{thm}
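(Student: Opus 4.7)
The plan is to prove the three parts in order. Part (i) is the only part that requires a real calculation; parts (ii) and (iii) follow by direct substitution.

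For part (i), to verify $(x \star y) \star z = x \star (y \star z)$, I expand the left side outside-in: $(x \star y) \star z = (x\star y) \prec z + (x\star y) \succ z + (x\star y) \curlyvee z + (x\star y) \cdot z$. Then I apply the tridendriform identities to each summand. Inside $(x \star y) \prec z$ I use (\ref{tt1}), (\ref{tt2}), (\ref{tt6}) to rewrite three of the four products, leaving an unresolved $(x \curlyvee y) \prec z$; I apply (\ref{tt3}) to $(x \star y) \succ z$ to produce a residual $-(x \curlyvee y) \cdot z$; I apply (\ref{tt8}) to $(x \star y) \curlyvee z$ to produce a residual $-(x \curlyvee y) \prec z$; and inside $(x \star y) \cdot z$ I use (\ref{tt4}), (\ref{tt5}), (\ref{tt7}) to rewrite three products, leaving a residual $(x \curlyvee y) \cdot z$. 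The two stray $(x \curlyvee y) \prec z$ terms cancel against each other, as do the two stray $(x \curlyvee y) \cdot z$ terms, and the ten remaining terms match exactly the ten terms of $x \star (y \star z)$ obtained by expanding $\star$ on the inside of the second factor.

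For part (ii), once the associativity from (i) is in place, each of the seven non-abelian $2$-cocycle axioms (\ref{nab1})--(\ref{nab7}) becomes a direct translation of a tridendriform identity under the substitutions $\vartriangleright = \succ$, $\vartriangleleft = \prec$, $\theta = \curlyvee$, $\cdot_A = \star$, $\cdot_B = \cdot$: (\ref{nab1}) is (\ref{tt3}), (\ref{nab2}) is (\ref{tt2}), (\ref{nab3}) is (\ref{tt1}), (\ref{nab4}) is (\ref{tt4}), (\ref{nab5}) is (\ref{tt5}), (\ref{nab6}) is (\ref{tt6}), and (\ref{nab7}) is (\ref{tt8}); associativity of the codomain $(\mathcal{A}, \cdot)$ is (\ref{tt7}). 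For part (iii), substituting $r = \mathrm{Id}$ into (\ref{rbt-nab}) reduces both sides to $x \succ y + x \prec y + x \cdot y + x \curlyvee y$, which is the definition of $\star$, so the identity map is a $(\vartriangleright, \vartriangleleft, \theta)$-twisted Rota-Baxter operator; applying Proposition~\ref{Ptta} then gives $\prec_r = \prec$, $\succ_r = \succ$, $\curlyvee_r = \curlyvee$, and the fourth operation is unchanged, so the induced twisted tridendriform structure coincides with the original. The main obstacle is the bookkeeping in (i): the cancellation of the $(x \curlyvee y) \prec z$ and $(x \curlyvee y) \cdot z$ residuals is somewhat delicate and only becomes apparent after all four outer products on the left side have been rewritten.
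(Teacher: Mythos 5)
Your proposal is correct and takes essentially the same route as the paper: the term-by-term expansion and cancellation you carry out in part (i) is the same computation that the paper packages more compactly by simply adding the left-hand sides and right-hand sides of all eight identities (TT1)--(TT8), which sum directly to $(x\star y)\star z$ and $x\star(y\star z)$ with no residual terms to track. Your identity-by-identity correspondence in part (ii) and the substitution $r=\mathrm{Id}$ in part (iii) coincide exactly with the paper's argument.
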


\begin{proof}
 (i) Since $(\mathcal{A}, \prec, \succ, \curlyvee, \cdot)$ is a twisted tridendriform algebra, the identities (\ref{tt1})-(\ref{tt8}) hold. Adding the left-hand sides of these identities, one obtains $(x \star y ) \star z$. On the other hand, by adding the right-hand sides of the same identities, one gets $x \ast ( y \ast z)$. Hence the result follows.    

 (ii) It is straightforward to see that the seven identities (listed in (\ref{nab1})-(\ref{nab7}))  of a non-abelian $2$-cocycle are respectively equivalent to the identities (\ref{tt3}),  (\ref{tt2}), (\ref{tt1}), (\ref{tt4}), (\ref{tt5}), (\ref{tt6}) and (\ref{tt8}).

 (iii) For any $x, y \in \mathcal{A}$, we have
 \begin{align*}
     \mathrm{Id} (x) \star  \mathrm{Id} (y) =~& x \star y = x \prec y + x \succ y + x \curlyvee y +  x \cdot y \\
     =~&  \mathrm{Id} \big(  x \vartriangleleft  \mathrm{Id} (y) +  \mathrm{Id} (x) \vartriangleright y + \theta (  \mathrm{Id} (x),  \mathrm{Id} (y)) + x \cdot y  \big).
 \end{align*}
 This shows that the identity map $\mathrm{Id} : (\mathcal{A}, \cdot) \rightarrow (\mathcal{A}, \star)$ is a $(\vartriangleright, \vartriangleleft, \theta)$-twisted Rota-Baxter operator.

 If $(\mathcal{A}, \prec_\mathrm{Id}, \succ_\mathrm{Id}, \curlyvee_\mathrm{Id}, \cdot)$ is the induced twisted tridendriform structure then
 \begin{align*}
     x \prec_\mathrm{Id} y = x \vartriangleleft \mathrm{Id}(y) = x \prec y, \quad x \succ_\mathrm{Id} y = \mathrm{Id} (x) \vartriangleright y = x \succ y \\
     \text{ and } ~ x \curlyvee_\mathrm{Id} y = \theta ( \mathrm{Id} (x) , \mathrm{Id} (y)) = x \curlyvee y, \text{ for all } x, y \in \mathcal{A}. \quad
 \end{align*}
 Hence $(\mathcal{A}, \prec_\mathrm{Id}, \succ_\mathrm{Id}, \curlyvee_\mathrm{Id}, \cdot) = (\mathcal{A}, \prec, \succ, \curlyvee, \cdot)$ which proves the final result.
\end{proof}

The above result unifies that a tridendriform algebra is always induced by a relative Rota-Baxter operator of weight 1 and an NS-algebra is always induced by a twisted Rota-Baxter operator.

\begin{remark}
    Let $(\mathcal{A}, \prec, \succ, \curlyvee, \cdot)$ be a twisted tridendriform algebra. Then the direct sum $\mathcal{A} \oplus \mathcal{A}$ carries an associative algebra structure with the multiplication
    \begin{align*}
        (x, x') \ast (y, y') := \big(  x \star y, ~ \! x' \cdot y' + x \succ y' + x' \prec y + x \curlyvee y \big),
    \end{align*}
    for all $(x, x') , (y, y') \in \mathcal{A} \oplus \mathcal{A}$. Then $(\mathcal{A} \oplus \mathcal{A}, \ast)$ is a quasi-twilled associative algebra.
\end{remark}

\medskip

\noindent {\bf Future works.}  (i) It follows from Definition \ref{defn-tta} that the operad $TwTridend$ that encodes twisted tridendriform algebras is a nonsymmetric operad. In \cite{loday-ronco} and \cite{das-ns} the authors have respectively defined the cohomology of tridendriform algebras and NS-algebras. In a subsequent paper, we aim to generalize their constructions and define the cohomology of twisted tridendriform algebras. To define the explicit cochain groups and the coboundary map, one must first understand the nonsymmetric operad $TwTridend$ and its dual operad $(TwTridend)^!$. Since
    \begin{align*}
        \mathrm{dim }~\!(TwTridend)^!(2) =~& \text{ the number of binary operations in a twisted tridendriform algebra} = 4, \\
        \mathrm{dim }~\!(TwTridend)^!(3) =~& \text{ the number of identities in a twisted tridendriform algebra} = 8,
    \end{align*}
    we expect that $\mathrm{dim }~\!(TwTridend)^!(n) = 2^n$, for any $n \geq 2$. We will also explore this in our upcoming paper.

    \medskip

    \medskip

    (ii) We have seen that a twisted tridendriform algebra can be realized as the underlying structure of a Rota-Baxter operator twisted by a non-abelian $2$-cocycle. In a work in progress, we introduce the notion of a ``twisted post-Lie algebra'' as the Lie analogue of a twisted tridendriform algebra. We observe that twisted post-Lie algebras arise from Rota-Baxter operators twisted by non-abelian Lie algebra $2$-cocycles. We further introduce twisted Rota-Baxter operators on (Lie) groups and find their relations to twisted post-Lie algebras.

%\textcolor{red}{Agore's paper: some motivations and examples from their papers}

%\textcolor{red}{controlling, governing algebras for modified r matrices and deformation maps in matched pair of associative algebras}

\medskip

\medskip

\noindent {\bf Acknowledgements.} Both authors thank the Department of Mathematics, IIT Kharagpur for providing the beautiful academic atmosphere where the research has been carried out.
%\vspace*{1cm}

\medskip

\noindent {\bf Funding.} Ramkrishna Mandal would like to thank the Government of India for supporting his work through the Prime Minister Research Fellowship.

\medskip

\noindent {\bf Data Availability Statement.} Data sharing does not apply to this article as no new data were created or analyzed in this study.


\begin{thebibliography}{BFGM03}

\bibitem{agore} A. L. Agore, Classifying complements for associative algebras, {\em Linear Alg. Appl.} 446 (2014), 345-355.

\bibitem{agore3} A. L. Agore and G. Militaru, Classifying complements for Hopf algebras and Lie algebras, {\em J. Algebra} 391 (2013), 321-341.

\bibitem{agore2} A. L. Agore and G. Militaru, The extending structures problem for algebras, Available at: \url{https://arxiv.org/abs/1305.6022v3}

\bibitem{agore4} A. L. Agore and G. Militaru, Hochschild products and global non-abelian cohomology for
algebras. Applications, {\em J. Pure Applied Algebra} 221 (2017), 366-392.

\bibitem{aguiar} M. Aguiar, Pre-Poisson algebras, {\em Lett. Math. Phys.} 54 (2000), 263-277.

\bibitem{Bai-Guo-Ni} C. Bai, L. Guo, and X. Ni, $\mathcal{O}$-operators on associative algebras and associative Yang–Baxter equations, {\em Pacific J. Math.} 256 (2012), 257-289.

\bibitem{guo-commun} C. Bai, L. Guo and X. Ni, Nonabelian generalized Lax pairs, the classical Yang-Baxter equation and PostLie algebras, {\em Commun. Math. Phys.} 297 (2010), 553-596.

%\bibitem{Bai-Guo-Ni} C. Bai, L. Guo, and X. Ni, Relative Rota-Baxter algebras and tridendriform algebras, {\em J. Algebra Appl.} 12, No. 07 (2013), 1350027.

\bibitem{bala} D. Balavoine, Deformations of algebras over a quadratic operad, {\em Contemp. Math.} 202 (1967), 207-234.

\bibitem{cao} W. Cao, An algebraic study of averaging operators, Ph.D. Thesis, Rutgers University at Newark (2000).

\bibitem{das-rota} A. Das, Deformations of associative Rota-Baxter operators, {\em J. Algebra} 560 (2020) 144-180.

\bibitem{das-twisted} A. Das,  Twisted Rota-Baxter operators and Reynolds operators on Lie algebras and NS-Lie algebras, {\em J. Math. Phys.} Vol. 62, Issue 9  (2021) 091701.

\bibitem{das-ns} A. Das, Cohomology and deformations of twisted Rota-Baxter operators and NS-algebras, {\em  J. Homotopy Relat. Struct.} 17 (2022) 233-262.

\bibitem{das-weighted} A. Das, Cohomology and deformations of weighted Rota-Baxter operators, {\em J. Math. Phys.} Vol. 63, Issue 9 (2022) 091703.

\bibitem{das-crossed} A. Das, Cohomology and deformations of crossed homomorphisms, {\em Bull. Belgian Math. Soc. Simon Stevin} Vol. 28, Issue 3 (2022) 381-397.

\bibitem{das-modified} A, Das, A cohomological study of modified Rota-Baxter algebras, Available at: \url{https://arxiv.org/abs/2207.02273}

\bibitem{das-avg} A. Das, Controlling structures, deformations and homotopy theory for averaging algebras, Available at: \url{https://arxiv.org/abs/2303.17798}

\bibitem{das-mishra} A. Das and S. K. Mishra, The $L_\infty$-deformations of associative Rota-Baxter algebras and homotopy Rota-Baxter operators, {\em J. Math. Phys.} Vol. 63, Issue 5 (2022) 051703.

\bibitem{drinfeld} V. Drinfeld, Quasi-Hopf algebras, {\em Leningrad Math. J.} 1 (1989), 1419-1457.

\bibitem{ebrahimi} K. Ebrahimi-Fard, Loday-type algebras and the Rota-Baxter relation, {\em Lett. Math. Phys.} 61, no. 2 (2002), 139-147.

\bibitem{yau} Y. Fr\'{e}gier, M. Markl and D. Yau, The $L_\infty$-deformation complex of diagrams of algebras, {\em New York J. Math.} 15 (2009), 353-392.

\bibitem{fre} Y. Fr\'{e}gier and M. Zambon, Simultaneous deformations of algebras and morphisms via derived brackets, {\em J. Pure
Appl. Algebra} 219 (2015), 5344-5362.

\bibitem{gers-ring} M. Gerstenhaber, The cohomology structure of an associative ring, {\em Ann. Math. (2)} 78 (1963), 267-288.

\bibitem{gers}  M. Gerstenhaber, On the deformation of rings and algebras, {\em Ann. Math. (2)} 79 (1964), 59-103.

\bibitem{gers-sch} M. Gerstenhaber and S. D. Schack, On the deformation of algebra morphisms and diagrams, {\em Trans. Amer. Math. Soc.} 279 (1983), no. 1, 1-50.

\bibitem{getzler} E. Getzler, Lie theory for nilpotent $L_{\infty}$-algebras, {\em Ann. Math. (2)} 170 (2009), 271-301.

%\bibitem{gouray} J.-B. Gouray, A differential graded Lie algebra approach to non-abelian extensions of associative algebras, Available at: \url{https://arxiv.org/abs/1802.04641}


\bibitem{guo-book} L. Guo, An introduction to Rota-Baxter algebra, {\em International Press, Somerville, MA; Higher Education Press, Beijing}, 2012. xii+226 pp. ISBN:978-1-57146-253-4


\bibitem{hoch} G. Hochschild, On the cohomology groups of an associative algebra, {\em Ann. Math. (2)} 46 (1945), 58-67.

%\bibitem{jiang} J. Jiang and Y. Sheng, Deformations, cohomologies and integrations of relative difference Lie algebras, {\em J. Algerba} 614 (2023), 535-563.

\bibitem{jiang-sheng-tang} J. Jiang, Y. Sheng and R. Tang, Deformation maps of quasi-twilled Lie algebras, \url{https://arxiv.org/abs/2405.02532}

\bibitem{kaj-stas} H. Kajiura and J. Stasheff, Homotopy algebras inspired by classical open-closed string field theory, {\em Commun.
Math. Phys.} 263 (2006), 553-581.

\bibitem{lada-markl} T. Lada and M. Markl, Strongly homotopy Lie algebras, {\it Comm. Algebra} 23 (1995), 2147-2161.

\bibitem{loday-dialgebra} J.-L. Loday, Dialgebra, in Dialgebras and related operad, {\em Lecture Notes in Math.} 1763 (2002), 7-66.

\bibitem{lazarev} A. Lazarev, Y. Sheng and R. Tang, Deformations and homotopy theory of relative Rota-Baxter Lie algebras, {\em Commun. Math. Phys.} 383 (2021), 595-631.

%\bibitem{Liu-Bai-sheng} J. Liu, C. Bai and Y. Sheng, Compatible $\mathcal{O}$-operators on bimodules over associative algebras, {\em J. Algebra }532 (2019), 80-118.

\bibitem{loday-ronco} J.-L. Loday and M. Ronco, Trialgebras and families of polytopes, {\em Contemp. Math.} 346 (2004), 369-398.

\bibitem{loday-morph} J.-L. Loday,  On the operad of associative algebras with derivation, {\em Georgian Math. J.} 17 (2010), no. 2, 347-372.

\bibitem{lod-val-book} J.-L. Loday and B. Vallette, Algebraic operads, Grundlehren mathematischen Wissenschaften, Volume 346, Springer-Verlag (2012), xviii+512 pp.


%\bibitem{gers}  M. Gerstenhaber, On the deformation of rings and algebras, {\em Ann. of Math. (2)} 79 (1964), 59-103.

\bibitem{men} I. Mencattini and A. Quesney, Crossed homomorphisms, integration of Post-Lie algebras and the Post-Lie magnus expansion, {\em Comm. Algebra} 49 (2021), 3507-3533.

\bibitem{nij-ric} A. Nijenhuis and R. Richardson, Cohomology and deformations in graded Lie algebras, {\em Bull. Amer. Math. Soc.} 72 (1966), 1-29.

\bibitem{nij-ric2} A. Nijenhuis and R. Richardson, Deformations of homomorphisms of Lie groups and Lie algebras, {\em Bull. Amer. Math. Soc.} 73 (1967), 175-179.

\bibitem{pei}
Y. Pei, Y. Sheng, R. Tang and K. Zhao, Actions of monoidal categories and representations of Cartan type Lie algebras, {\em J. Inst. Math. Jussieu} 22 (2023), 2367-2402.

\bibitem{tang} R. Tang, C. Bai, L. Guo and Y. Sheng, Deformations and their controlling cohomologies of $\mathcal{O}$-operators, {\em Commun.
Math. Phys.} 368 (2019), 665-700.


\bibitem{uchino}  K. Uchino, Quantum analogy of Poisson geometry, related dendriform algebras and Rota-Baxter operators, {\em Lett. Math. Phys.} 85, no. 2-3 (2008), 91-109.

\bibitem{uchino-t}  K. Uchino, Twisting on associative algebras and Rota–Baxter type operators, {\em J. Noncommut. Geom.} 4 (2010), 349–379.

\bibitem{voro} Th. Voronov, Higher derived brackets and homotopy algebras, {\em J. Pure Appl. Algebra} 202 (2005), 133-153.

\bibitem{wang-zhou} K. Wang and G. Zhou, Cohomology theory of averaging algebras, $L_\infty$-structures and homotopy averaging algebras, Available at: \url{https://arxiv.org/abs/2009.11618}

\bibitem{wang-zhou2} K. Wang and G. Zhou, Deformations and homotopy theory of Rota-Baxter algebras of any weight, Available at: \url{https://arxiv.org/abs/2108.06744}

\bibitem{zhang-gao-guo} T. Zhang, X. Gao and L. Guo, Reynolds algebras and their free objects from bracketed words and rooted trees, {\em J. Pure Applied Algebra} 225 (2021) 106766.

\end{thebibliography}
\end{document}